\theoremstyle{plain}
\newtheorem{theorem}{Theorem}[section]
\newtheorem{proposition}[theorem]{Proposition}
\newtheorem{lemma}[theorem]{Lemma}
\newtheorem{corollary}[theorem]{Corollary}
\theoremstyle{remark}
\def\lt{\left}
\def\rt{\right}
\def\bS{\mathbb S}
\def\bB{\mathbb B}
\def\R{\mathbf R}
\numberwithin{equation}{section}
\title{Higher order Sobolev trace inequalities on balls revisited}
\def\cfac#1{\ifmmode\setbox7\hbox{$\accent"5E#1$}\else\setbox7\hbox{\accent"5E#1}\penalty 10000\relax\fi\raise 1\ht7\hbox{\lower1.05ex\hbox to 1\wd7{\hss\accent"13\hss}}\penalty 10000\hskip-1\wd7\penalty 10000\box7 }
\author[Q. A. Ng\^o]{Qu\cfac oc Anh Ng\^o}
\address[Q.A. Ng\^o]{Department of Mathematics\\
College of Science, Vi\^{e}t Nam National University\\
H\`{a} N\^{o}i, Vi\^{e}t Nam.}
\email{\href{mailto: Q.A. Ng\^o <nqanh@vnu.edu.vn>}{nqanh@vnu.edu.vn}}
\email{\href{mailto: Q.A. Ng\^o <bookworm\_vn@yahoo.com>}{bookworm\_vn@yahoo.com}}
\author[V. H. Nguyen]{Van Hoang Nguyen}
\address[V.H. Nguyen]{Institute of Mathematics\\
Vietnam Academy of Science and Technology\\
Hanoi, Vietnam.}
\email{\href{mailto: V.H. Nguyen <vanhoang0610@yahoo.com >}{vanhoang0610@yahoo.com}}
\email{\href{mailto: V.H. Nguyen <nvhoang@math.ac.vn>}{nvhoang@math.ac.vn}}
\author[Q. H. Phan]{Quoc Hung Phan}
\address[Q. H. Phan]{Institute of Research and Development\\
Duy Tan University\\
Da Nang, Vietnam.}
\email{\href{mailto: P.Q. Hung <phanquochung@dtu.edu.vn>}{phanquochung@dtu.edu.vn}} 
\begin{document}

\subjclass[2000]{Primary 53C44; Secondary 35J30 }

\keywords{Higher order fractional Laplacian; Gaussian hypergeometric function; Sobolev trace inequality, Beckner inequality, Ledebev--Milin inequality}

\date{\bf \today \ at \currenttime}

\renewcommand{\thefootnote}{\arabic{footnote}}
\setcounter{footnote}{0}

\begin{abstract}
Inspired by a recent sharp Sobolev trace inequality of order four on the balls $\bB^{n+1}$ found by Ache and Chang \cite{ac2015}, we propose a slightly different approach to reprove Ache--Chang's trace inequality. To illustrate this approach, we reprove the classical Sobolev trace inequality of order two on $\bB^{n+1}$ and provide sharp Sobolev trace inequalities of orders six and eight on $\bB^{n+1}$. As the limiting case of the Sobolev trace inequality, a Lebedev--Milin type inequality of order up to eight is also considered.
\end{abstract}

\maketitle


\section{Introduction}

The motivation of writing this paper traces back to a recent work due to Ache and Chang \cite{ac2015} concerning the sharp Sobolev trace inequality of order four on the unit ball $\bB^{n+1}$ in $\R^{n+1}$. As indicated in \cite{ac2015}, by the order of all inequalities mentioned in the present paper, we refer to the order of the operator involved in the derivation of these inequalities.  In the next few paragraphs, we briefly recall the theory of Sobolev trace inequalities to understand why this finding is significant. 

Of importance in analysis and conformal geometry are Sobolev and Sobolev trace inequalities either on Euclidean spaces or on Euclidean balls. These inequalities, in brief, provide compact embeddings between important functional spaces. For the classical Sobolev inequality (of order two), its version on $\R^n$ is given as follows
\begin{equation}\label{eq:SobolevSpaceOrder2}
\frac{\Gamma (\frac{n+2}2)}{\Gamma (\frac{n-2}2)}\omega_n^{2/n}\Big( \int_{\R^n} |u|^{\frac{2n}{n-2}} dz \Big)^{\frac{n-2}n} \leqslant \int_{\R^n} |\nabla u|^2 dz 
\end{equation}
for any smooth function $u$ with compact support. Here, and throughout this paper, $\omega_n$ is the volume of the unit sphere $\bS^n$, the boundary of the unit ball $\bB^{n+1}$, in $\R^{n+1}$, which is $2\pi^{(n+1)/2}/\Gamma ((n+1)/2)$, which is also $2^n \pi^{n/2} \Gamma (n/2)/\Gamma (n)$. It is well-known that the inequality \eqref{eq:SobolevSpaceOrder2} is crucial in the resolution of the Yamabe problem on closed manifolds. Not limited to the Yamabe problem, Inequality \eqref{eq:SobolevSpaceOrder2} is the fundamental tool and have a significant role in various problems in analysis and geometry. Since the vast subject of Sobolev inequalities can be easily found in the literature, we do not mention it here.

Inspired by the sharp Sobolev inequality \eqref{eq:SobolevSpaceOrder2} on $\R^n$, the following sharp Sobolev trace inequality on $\R^{n+1}_+$ is well-known
\begin{equation}\label{eq:SobolevTraceSpaceOrder2}
\frac{\Gamma (\frac{n+1}2)}{\Gamma (\frac{n-1}2)} \omega_n^{1/n} \Big( \int_{\R^n} |U(x,0)|^{\frac{2n}{n-1}} dx \Big)^{\frac{n-1}n} \leqslant \int_{\R^{n+1}_+} |\nabla U|^2 dxdy.
\end{equation}
Here in \eqref{eq:SobolevTraceSpaceOrder2} we denote by $(x,y) \in \R^n \times \R$ a point in $\R^{n+1}$ and by $(x,y) \in \R_+^{n+1}$ we mean $y>0$. To study \eqref{eq:SobolevTraceSpaceOrder2}, it is routine to study the quotient
\[
Q(U) = \frac
{ \int_{\R^{n+1}_+} |\nabla U|^2 dxdy}
{\big( \int_{\R^n} |U(x,0)|^{\frac{2n}{n-1}} dx \big)^{(n-1)/n} }
\]
and its Sobolev quotient
\begin{equation}\label{eq:SobolevTraceSpaceOrder2-Variation}
Q(\R_+^{n+1}) =\inf \big\{ Q(U) : U \in C_0^\infty (\overline{\R_+^{n+1}}), U \not \equiv 0 \big\}.
\end{equation}
It turns out that $Q(\R_+^{n+1}) =  \big( \Gamma (\frac{n+1}2) / \Gamma (\frac{n-1}2) \big) \omega_n^{1/n} $.

The existence of optimizers for \eqref{eq:SobolevTraceSpaceOrder2-Variation} was first studied by Lions \cite{Lions} by using the concentration-compactness principle. Later on, Escobar classified all optimizers for $Q(\R_+^{n+1})$ and computed explicitly the sharp constant $Q(\R_+^{n+1})$; see \cite[Theorem 1]{Escobar88}. To obtain such results, Escobar exploits the conformally equivalent property between $\R_+^{n+1}$ and $\bB^{n+1}$ to transfer the trace inequality \eqref{eq:SobolevTraceSpaceOrder2} on $\R_+^{n+1}$ to a suitable trace inequality on $\bB^{n+1}$, namely, the inequality \eqref{eq:TraceOrder2} in Theorem \ref{thmTraceOrder2} below. Then he studied the similar Sobolev quotient
\[
Q ( \mathbb B^{n+1}) = \inf_{ v \in C^1 (\overline {\mathbb B^1})}
\left\{ 
\frac
{\int_{\bB^{n+1}} |\nabla v|^2 dz + ((n-1)/2) \int_{\bS^n} |v|^2 d\omega}
{ \big(\int_{\bS^n}|v|^{\frac{2n}{n-1}} d\omega \big)^{(n-1)/n}}
\right\}
\]
and proved that 
\[
Q(\R_+^{n+1}) = Q ( \mathbb B^{n+1}).
\]
Finally, he showed that an optimizer for $Q ( \mathbb B^{n+1})$ exists and by Obata's method he was able to classify all optimizers.

In \cite{beckner1993}, Beckner took a completely different approach based on spherical harmonics and the dual-spectral form of the Hardy--Littlewood--Sobolev inequality on $\mathbb S^n$, which was used earlier in \cite{beckner1992}, to reprove \eqref{eq:TraceOrder2}; see \cite[Theorem 4]{beckner1993}. Combining Beckner and Escobar' result, the following sharp Sobolev trace inequality of order two is already known.

\begin{theorem}[Sobolev trace inequality of order two]\label{thmTraceOrder2}
Let $f\in C^\infty(\bS^n)$ with $n > 1$, suppose that $v$ is a smooth extension of $f$ to the unit ball $\bB^{n+1}$. Then we have the following sharp trace inequality
\begin{equation}\label{eq:TraceOrder2}
\frac{\Gamma(\frac{n+1}2)}{\Gamma(\frac{n-1}2)}  \omega_n^{1/n} \Big(\int_{\bS^n}|f|^{\frac{2n}{n-1}} d\omega \Big)^{\frac{n-1}{n}}\leqslant \int_{\bB^{n+1}} |\nabla v|^2 dz + a_n \int_{\bS^n} |f|^2 d\omega ,
\end{equation}
where $a_n = \Gamma ((n+1)/2)/\Gamma((n-1)/2) = (n-1)/2$. Moreover, equality in \eqref{eq:TraceOrder2} holds if, and only if, $v$ is a harmonic extension of a function of the form
\[
f_{z_0} (\xi) = c|1-\langle z_0, \xi \rangle |^{-(n-1)/2},
\]
where $c>0$ is a constant, $\xi \in \mathbb S^n$, and $z_0$ is some fixed point in the interior of $\mathbb B^{n+1}$.
\end{theorem}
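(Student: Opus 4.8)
The plan is to turn \eqref{eq:TraceOrder2} into a purely spectral inequality on $\bS^n$. First I would observe that among all smooth extensions $v$ of a fixed trace $f$, the Dirichlet integral $\int_{\bB^{n+1}}|\nabla v|^2\,dz$ is smallest when $v$ is the harmonic extension of $f$ (Dirichlet principle), and the boundary term $a_n\int_{\bS^n}|f|^2\,d\omega$ does not depend on $v$; hence it suffices to prove \eqref{eq:TraceOrder2} for $v$ harmonic. Expanding $f=\sum_{k\geqslant 0}Y_k$ in spherical harmonics on $\bS^n$ (so $-\Delta_{\bS^n}Y_k=k(k+n-1)Y_k$), the harmonic extension is $v(r\xi)=\sum_{k\geqslant 0}r^{k}Y_k(\xi)$, and I would compute the right-hand side of \eqref{eq:TraceOrder2} as an explicit diagonal quadratic form in the components $Y_k$. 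The resulting form is the one attached to the conformal operator of order one on $\bS^n$, after which \eqref{eq:TraceOrder2} reduces to the sharp first-order Sobolev inequality on the sphere; that inequality I would then obtain, following Beckner, from the dual-spectral form of the Hardy--Littlewood--Sobolev inequality on $\bS^n$.

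For the energy computation I would write $z=r\xi$ with $r\in(0,1)$, $\xi\in\bS^n$, use $|\nabla v|^2=(\partial_r v)^2+r^{-2}|\nabla_{\bS^n}v|^2$ and the $L^2(\bS^n)$-orthogonality of spherical harmonics, and carry out the radial integration via $\int_0^1 r^{2k+n-2}\,dr=(2k+n-1)^{-1}$, obtaining
\[
\int_{\bB^{n+1}}|\nabla v|^2\,dz=\sum_{k\geqslant 0}\frac{k^2+k(k+n-1)}{2k+n-1}\,\|Y_k\|_{L^2(\bS^n)}^2=\sum_{k\geqslant 0}k\,\|Y_k\|_{L^2(\bS^n)}^2 .
\]
Since $\int_{\bS^n}|f|^2\,d\omega=\sum_{k}\|Y_k\|_{L^2(\bS^n)}^2$ and $a_n=(n-1)/2$, the right-hand side of \eqref{eq:TraceOrder2} becomes
\[
\sum_{k\geqslant 0}\Big(k+\frac{n-1}2\Big)\|Y_k\|_{L^2(\bS^n)}^2=\sum_{k\geqslant 0}\frac{\Gamma(k+\frac{n+1}2)}{\Gamma(k+\frac{n-1}2)}\,\|Y_k\|_{L^2(\bS^n)}^2=\int_{\bS^n}fP_1 f\,d\omega ,
\]
where $P_1$ is the operator on $\bS^n$ multiplying a degree-$k$ spherical harmonic by $\Gamma(k+\frac{n+1}2)/\Gamma(k+\frac{n-1}2)=k+\frac{n-1}2$; equivalently $P_1=\mathcal N+\frac{n-1}2$ with $\mathcal N$ the Dirichlet-to-Neumann map of $\bB^{n+1}$, i.e. the intertwining operator of order one with parameter $\sigma=1/2$.

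It then remains to establish the sharp sphere inequality
\[
\frac{\Gamma(\frac{n+1}2)}{\Gamma(\frac{n-1}2)}\,\omega_n^{1/n}\Big(\int_{\bS^n}|f|^{\frac{2n}{n-1}}\,d\omega\Big)^{\frac{n-1}n}\leqslant\int_{\bS^n}fP_1 f\,d\omega ,
\]
with equality exactly for $f=f_{z_0}$. Here I would argue by duality: $P_1^{-1}$ multiplies a degree-$k$ spherical harmonic by $\Gamma(k+\frac{n-1}2)/\Gamma(k+\frac{n+1}2)$, and a Funk--Hecke/Gegenbauer computation — in which the Gaussian hypergeometric function enters naturally — identifies these numbers, up to one fixed normalizing constant, with the expansion coefficients of the kernel $\big(2(1-\langle\xi,\eta\rangle)\big)^{-(n-1)/2}$ on $\bS^n\times\bS^n$. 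Thus the displayed inequality is dual to the sharp Hardy--Littlewood--Sobolev inequality on $\bS^n$, whose optimal constant reproduces $\Gamma(\frac{n+1}2)/\Gamma(\frac{n-1}2)\,\omega_n^{1/n}$ once the volume normalizations are matched, and whose extremizers are precisely the functions $|1-\langle z_0,\xi\rangle|^{-(n-1)/2}$. Finally, to obtain the equality case of \eqref{eq:TraceOrder2} I would trace equality backwards: it forces $v$ to be the harmonic extension of $f$ and forces $f=f_{z_0}$, so the extremal $v$ is the harmonic extension of $f_{z_0}$.

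I expect the only genuinely delicate point to be the spectral identification of the kernel of $P_1^{-1}$ together with the accompanying bookkeeping of the sharp constant — namely, evaluating the Gegenbauer expansion of $\big(2(1-\langle\xi,\eta\rangle)\big)^{-(n-1)/2}$ and reconciling the normalization in the sharp Hardy--Littlewood--Sobolev inequality on $\bS^n$ with the constant $\Gamma(\frac{n+1}2)/\Gamma(\frac{n-1}2)\,\omega_n^{1/n}$ in \eqref{eq:TraceOrder2}. The remaining pieces (the Dirichlet principle, the orthogonality computation, and the equality analysis) are routine. As an alternative to invoking Hardy--Littlewood--Sobolev one could transplant the sphere inequality to $\R^n$ by stereographic projection, where it becomes \eqref{eq:SobolevTraceSpaceOrder2} read on the boundary hyperplane; but then the extremals would still have to be pulled back through the projection, which is no shorter.
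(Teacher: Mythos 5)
Your argument is correct, but it is not the route the paper takes for this theorem: the paper's proof of Theorem \ref{thmTraceOrder2} is an Escobar-style conformal transplantation. There, one takes the harmonic extension $u$ of $f$, sets $U=(u\circ B)\,\Phi^{(n-1)/2}$ on $\R^{n+1}_+$ (harmonic by Proposition \ref{prop:RelationF1}), applies the known half-space trace inequality \eqref{eq:SobolevTraceSpaceOrder2}, and then converts $\int_{\R^{n+1}_+}|\nabla U|^2\,dx\,dy$ back to $\int_{\bB^{n+1}}|\nabla u|^2\,dz+\frac{n-1}2\int_{\bS^n}|f|^2\,d\omega$ by an explicit change of variables and integration by parts, finishing with the same Dirichlet-principle step you use. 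Your proposal is instead essentially Beckner's spectral proof: reduce to the harmonic extension, compute $\int_{\bB^{n+1}}|\nabla v|^2\,dz=\sum_k k\,\|Y_k\|_{L^2}^2$ (your radial computation is right), identify the right-hand side with $\int_{\bS^n} f\,P_1 f\,d\omega$ where $P_1$ acts as $k+\frac{n-1}2$ on degree-$k$ harmonics, and quote the sharp order-one inequality on $\bS^n$ via the dual-spectral Hardy--Littlewood--Sobolev inequality (the Funk--Hecke identification of $|\xi-\eta|^{-(n-1)}=\big(2(1-\langle\xi,\eta\rangle)\big)^{-(n-1)/2}$ with $P_1^{-1}$ up to normalization is indeed the only delicate bookkeeping). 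It is worth noting that the paper also contains a variant of your route: Subsection \ref{subsec-2Beckner} (Theorems \ref{thm2BecknerTypeWeight} and \ref{thm2BecknerTypeNoWeight} at $s=1$) performs exactly your spectral reduction, but closes it by pulling $f$ back to $\R^n$ through the stereographic projection and invoking the fractional Sobolev inequality \eqref{eq:SobolevSpaceOrderS/2} together with the identity \eqref{eq:GN}, rather than citing dual HLS on the sphere directly. The trade-off: the Escobar route exhibits the conformal duality between \eqref{eq:SobolevTraceSpaceOrder2} and \eqref{eq:TraceOrder2}, which is the paper's main theme and what it scales up (with nontrivial boundary-condition bookkeeping) to orders four, six and eight; your Beckner route is analytically lighter, makes the sharp constant and the extremals $f_{z_0}$ transparent, and is in fact the strategy the paper itself adopts for the higher-order cases.
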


We note that using his approach, Beckner also obtained a sharp form of the Sobolev inequality on $\mathbb S^n$, namely
\begin{equation}\label{eq:SobolevSphereOrder2}
\frac{\Gamma (\frac{n+2}2)}{\Gamma (\frac{n-2}2)} \Big(\int_{\bS^n}|f|^{\frac{2n}{n-2}} d\omega \Big)^{\frac{n-2}{n}}\leqslant  \int_{\bS^n} |\nabla f|^2 d\omega + \frac{n(n-2)}4 \int_{\bS^n} |f|^2  d\omega .
\end{equation}
As for \eqref{eq:SobolevSpaceOrder2}, Inequality \eqref{eq:SobolevSphereOrder2} also has some role in the study of the Yamabe problem on $\mathbb S^n$.

Apparently, for all inequalities \eqref{eq:SobolevSpaceOrder2}--\eqref{eq:TraceOrder2} mentioned above, the operators involved are either the Laplacian or the conformal Laplacian, both are of order two. In recent years, a number of works are devoted to understanding higher order operators such as the poly-Laplacian, the Paneitz operator, the GJMS operators. For example, the following Sobolev inequality for higher order fractional derivatives in $\R^n$
\begin{equation}\label{eq:SobolevSpaceOrderS/2}
\frac{\Gamma (\frac{n+2s}2)}{\Gamma (\frac{n-2s}2)} \omega_n^{2s/n} \Big(\int_{\R^n} |u|^{\frac{2n}{n-2s}} dz\Big)^{\frac{n-2s}n} \leqslant \int_{\R^n} |(-\Delta)^{s/2} u|^2 dz
\end{equation}
was explicitly stated in \cite{CT04}, for before that, but in an implicitly formin terms of fractional integrals, in \cite{Lieb1983}. Similarly, there is a sharp higher order Sobolev inequality on $\bS^n$ for a class of pseudo-differential operators $P_{2\gamma}$ defined for $\gamma \in (0, n/2)$ as follows
\begin{equation}\label{eq:SobolevSphereOrder2gamma}
\frac{\Gamma (\frac{n+2\gamma}2)}{\Gamma (\frac{n-2\gamma}2)}  \omega_n^{2\gamma/n} \Big(\int_{\bS^n} |f|^{\frac{2n}{n-2\gamma}} d \omega \Big)^{\frac{n-2\gamma}n} \leqslant \int_{\bS^n} f \,P_{2\gamma} f \, d\omega
\end{equation}
see \cite[Theorem 6]{beckner1993}. Here the operator $P_{2\gamma}$ is formally given by
\[
P_{2\gamma} = \frac{\Gamma (B + 1/2+\gamma)}{\Gamma (B + 1/2-\gamma)}
\]
with
\[
B = \sqrt{-\widetilde \Delta + \big(\frac{n-1}2 \big)^2}.
\]
Here, and as always, $\widetilde \Delta$ denotes the Laplacian on $\bS^n$ with respect to the standard metric $g_{\bS^n}$. In a special case when $\gamma =2$, we know that
\[
\begin{aligned}
P_4 =& \Big( \sqrt{-\widetilde \Delta + \big(\frac{n-1}2 \big)^2}  + \frac 32 \Big)\Big( \sqrt{-\widetilde \Delta + \big(\frac{n-1}2 \big)^2}  + \frac 12 \Big)\\
&\times \Big( \sqrt{-\widetilde \Delta + \big(\frac{n-1}2 \big)^2}  - \frac 12 \Big)\Big( \sqrt{-\widetilde \Delta + \big(\frac{n-1}2 \big)^2}  - \frac 32 \Big)\\
=&\Big( -\widetilde \Delta + \frac {n(n-2)}4\Big)\Big( -\widetilde \Delta + \frac {(n+2)(n-4)}4\Big),
\end{aligned}
\]
which, by \eqref{eq:SobolevSphereOrder2gamma}, implies that
\begin{equation}\label{eq:SobolevSphereOrder4}
\begin{aligned}
\frac{\Gamma (\frac{n+4}2)}{\Gamma (\frac{n-4}2)}  \omega_n^{4/n} &\Big(\int_{\bS^n} |f|^{\frac{2n}{n-4}} d \omega \Big)^{\frac{n-4}n}\\
& \leqslant  \int_{\bS^n} \Big( (\widetilde \Delta f)^2 + \frac{n^2-2n-4}2 |\widetilde \nabla f|^2 + \frac{\Gamma (\frac{n+3}2)}{\Gamma (\frac{n-4}2)} f^2 \Big) \, d\omega 
\end{aligned}
\end{equation}
with $n > 4$. From \eqref{eq:SobolevSphereOrder4}, it is natural to ask with $n$ being greater than four \textit{whether or not there is higher order Sobolev trace inequality on $\bB^{n+1}$}.

A recent result due to Ache and Chang provides an affirmative answer to the above question. To be more precise and in terms of our notation convention, the following theorem, among other things, indicating a fourth-order Sobolev trace inequality on $\bB^{n+1}$, was proved in \cite[Theorem A]{ac2015}.

\begin{theorem}[Sobolev trace inequality of order four]\label{thmTraceOrder4}
Let $f\in C^\infty(\bS^n)$ with $n>3$ and suppose $v$ is a smooth extension of $f$ to the unit ball $\bB^{n+1}$, which also satisfies the Neumann boundary condition
\begin{equation}\label{eq:4NeumannCondition}
\partial_ \nu v \big|_{\bS^n} = -\frac{n-3}2 f.
\end{equation}
Then we have the sharp trace inequality
\begin{equation}\label{eq:TraceOrder4}
\begin{aligned}
2 \frac{\Gamma(\frac{n+3}2)}{\Gamma(\frac{n-3}2)} \omega_n^{3/n} & \Big(\int_{\bS^n} |f|^{\frac{2n}{n-3}} d\omega \Big)^{\frac{n-3}n} \\
&\leqslant \int_{\bB^{n+1}} |\Delta v|^2 dz + 2\int_{\bS^n} |\widetilde{\nabla} f|^2 d\omega + b_n \int_{\bS^n} |f|^2 d\omega ,
\end{aligned}
\end{equation}
where $b_n = (n+1)(n-3)/2$ and $\widetilde{\nabla}$ denotes spherical gradient on $\bS^n$. Moreover, equality in \eqref{eq:TraceOrder4} holds if, and only if, $v$ is a biharmonic extension of a function of the form
\[
f_{z_0} (\xi) = c|1-\langle z_0, \xi \rangle |^{-(n-3)/2},
\]
where $c>0$ is a constant, $\xi \in \mathbb S^n$, $z_0$ is some point in the interior of $\mathbb B^{n+1}$, and $v$ fulfills the boundary condition \eqref{eq:4NeumannCondition}.
\end{theorem}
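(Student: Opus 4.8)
The plan is to deduce the fourth-order trace inequality \eqref{eq:TraceOrder4} from the sharp Sobolev inequality \eqref{eq:SobolevSphereOrder2gamma} on $\bS^n$ for the operator $P_{2\gamma}$ in the admissible case $\gamma=3/2$ (admissible precisely because $0<3/2<n/2$ is equivalent to $n>3$), by diagonalizing both sides of \eqref{eq:TraceOrder4} in spherical harmonics.

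First I would pass to a distinguished extension. For fixed $f$, among all smooth $v$ on $\bB^{n+1}$ with $v|_{\bS^n}=f$ and $\partial_\nu v|_{\bS^n}=-\tfrac{n-3}2 f$, the energy $\int_{\bB^{n+1}}|\Delta v|^2\,dz$ is minimized by the unique \emph{biharmonic} extension $v_f$ (that is, $\Delta^2 v_f=0$ on $\bB^{n+1}$ with the same two Cauchy data on $\bS^n$). Indeed, writing $v=v_f+\varphi$ with $\varphi=\partial_\nu\varphi=0$ on $\bS^n$, Green's identity gives $\int_{\bB^{n+1}}\Delta v_f\,\Delta\varphi=\int_{\bB^{n+1}}\varphi\,\Delta^2 v_f+\int_{\bS^n}\big(\Delta v_f\,\partial_\nu\varphi-\varphi\,\partial_\nu\Delta v_f\big)=0$, so $\int_{\bB^{n+1}}|\Delta v|^2=\int_{\bB^{n+1}}|\Delta v_f|^2+\int_{\bB^{n+1}}|\Delta\varphi|^2\geqslant\int_{\bB^{n+1}}|\Delta v_f|^2$, with equality only when $\varphi\equiv0$. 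Hence it suffices to prove \eqref{eq:TraceOrder4} for $v=v_f$, and in the equality analysis this forces $v=v_f$.

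Next, expand $f=\sum_{k\geqslant0}f_k$ into spherical harmonics ($f_k$ of degree $k$; write $\|f_k\|$ for the $L^2(\bS^n)$ norm). Since the biharmonic functions on $\bB^{n+1}$ of degree $k$ that are smooth at the origin are spanned by $r^k f_k$ and $r^{k+2}f_k$, we have $v_f(r\xi)=\sum_k\big(\alpha_k r^k+\beta_k r^{k+2}\big)f_k(\xi)$, and the two boundary conditions at $r=1$ give $\alpha_k+\beta_k=1$ and $k\alpha_k+(k+2)\beta_k=-\tfrac{n-3}2$, hence $\beta_k=-\tfrac12\big(k+\tfrac{n-3}2\big)$. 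Using $\Delta(r^k f_k)=0$ and $\Delta(r^{k+2}f_k)=2(2k+n+1)\,r^k f_k$ in $\R^{n+1}$, the orthogonality of distinct $r^k f_k$ in $L^2(\bB^{n+1})$, and $\int_0^1 r^{2k+n}\,dr=(2k+n+1)^{-1}$, a direct computation yields
\[
\int_{\bB^{n+1}}|\Delta v_f|^2\,dz=\sum_{k\geqslant0}2\Big(k+\tfrac{n-3}2\Big)^2\Big(k+\tfrac{n+1}2\Big)\|f_k\|^2 .
\]
Combining this with $\int_{\bS^n}|\widetilde\nabla f|^2\,d\omega=\sum_k k(k+n-1)\|f_k\|^2$, $\int_{\bS^n}|f|^2\,d\omega=\sum_k\|f_k\|^2$, and $b_n=(n+1)(n-3)/2$, the elementary identity $2a^2(a+2)+2k(k+n-1)+b_n=2a(a+1)(a+2)$ with $a=k+\tfrac{n-3}2$ shows that the right-hand side of \eqref{eq:TraceOrder4} equals
\[
\sum_{k\geqslant0}2\Big(k+\tfrac{n+1}2\Big)\Big(k+\tfrac{n-1}2\Big)\Big(k+\tfrac{n-3}2\Big)\|f_k\|^2=2\int_{\bS^n}f\,P_3 f\,d\omega,
\]
because $B=\sqrt{-\widetilde\Delta+((n-1)/2)^2}$ acts on degree-$k$ harmonics by $k+\tfrac{n-1}2$, so $P_3=(B+1)B(B-1)$ acts by $(k+\tfrac{n+1}2)(k+\tfrac{n-1}2)(k+\tfrac{n-3}2)$. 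Now \eqref{eq:SobolevSphereOrder2gamma} with $\gamma=3/2$ gives $2\int_{\bS^n}f\,P_3 f\,d\omega\geqslant2\,\tfrac{\Gamma((n+3)/2)}{\Gamma((n-3)/2)}\,\omega_n^{3/n}\big(\int_{\bS^n}|f|^{\frac{2n}{n-3}}\,d\omega\big)^{\frac{n-3}n}$, which is exactly \eqref{eq:TraceOrder4}.

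For the equality case, equality forces $\varphi\equiv0$, i.e. $v=v_f$, together with equality in \eqref{eq:SobolevSphereOrder2gamma} for $\gamma=3/2$. The extremals of the latter form a single conformal orbit of the constants, and transported to $\bS^n$ through stereographic projection they are precisely the functions $f_{z_0}(\xi)=c\,|1-\langle z_0,\xi\rangle|^{-(n-3)/2}$; the associated $v_f$ is then the biharmonic extension of $f_{z_0}$ satisfying \eqref{eq:4NeumannCondition}. I expect the main obstacle to be this equality discussion — pinning down the extremal set of \eqref{eq:SobolevSphereOrder2gamma} for the half-integer order $\gamma=3/2$ (which ultimately rests on the sharp Hardy--Littlewood--Sobolev inequality on $\bS^n$ in its dual-spectral form, together with conformal invariance) and identifying it with the stated family $f_{z_0}$; the remaining steps amount to the routine, if slightly tedious, bookkeeping of the spherical-harmonic coefficients.
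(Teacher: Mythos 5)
Your proposal is correct, and it coincides with the paper's second derivation of \eqref{eq:TraceOrder4} (the Beckner-type route of Subsection \ref{subsec-4Beckner}) rather than with its featured Escobar-style proof. The paper's main argument transplants the problem to the half-space: it sets $U=(u\circ B)\Phi^{(n-3)/2}$, checks via Proposition \ref{thmIDENTITY} that $U$ is biharmonic with $\partial_yU(x,0)=0$, applies the half-space inequality \eqref{eq:SobolevTraceSpaceOrder4}, and then converts $\int_{\R^{n+1}_+}|\Delta U|^2$ back to the ball through a fairly long chain of integrations by parts. What you do instead — reduce to the biharmonic extension by the orthogonality of $\Delta\varphi$ against $\Delta v_f$, expand $v_f=\sum_k(\alpha_k r^k+\beta_k r^{k+2})f_k$ with $\beta_k=-\tfrac12\bigl(k+\tfrac{n-3}2\bigr)$, compute the spectral multiplier $2\bigl(k+\tfrac{n-3}2\bigr)^2\bigl(k+\tfrac{n+1}2\bigr)$ of $\int|\Delta v_f|^2$, and match the full right-hand side with $2\int f\,P_3f\,d\omega$ — is exactly the computation in Subsection \ref{subsec-4Beckner} specialized to $s=1$; your multiplier agrees with the paper's $\tfrac14(n+1+2k)(n-3+2k)^2$, and your identity $2a^2(a+2)+2k(k+n-1)+b_n=2a(a+1)(a+2)$ is the same coefficient bookkeeping (indeed it confirms the correct constant $\tfrac{(n-3)(n+1)}2$, where the paper's display carries a harmless factor-of-two typo). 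The only cosmetic differences are that you evaluate the energy by direct radial integration and orthogonality instead of integration by parts, and that you invoke Beckner's sphere inequality \eqref{eq:SobolevSphereOrder2gamma} at $\gamma=3/2$ directly, whereas the paper pulls back the fractional Sobolev inequality \eqref{eq:SobolevSpaceOrderS/2} to $\bS^n$ via stereographic projection and the spectral identity \eqref{eq:GN}; these two inputs are equivalent. Your handling of the equality case (equality forces $\varphi\equiv0$ plus extremality in the sharp order-$3$ inequality, whose extremals are the conformal orbit giving $f_{z_0}$) matches the level of detail in the paper, which likewise defers to the known classification of extremals.
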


To prove \eqref{eq:TraceOrder4}, Ache and Chang use a nontraditional way in the sense that they first derive a similar inequality for some metric $g^*$ on $\bB^{n+1}$, which is in the conformal class of the Euclidean metric, then they derive \eqref{eq:TraceOrder4}  by making use of the conformal covariant properties of the four-order Paneitz operator with respect to $g^*$ and the bilaplacian operator with respect to the Euclidean metric.

The aim of the present paper is twofold. First we revisit Escobar's approach based on the conformally convariant property of \eqref{eq:SobolevTraceSpaceOrder2} to provide new proofs for \eqref{eq:TraceOrder2} and \eqref{eq:TraceOrder4}. We note that although \eqref{eq:TraceOrder2} is already known by Beckner's fundamental paper, however, the proof given by Beckner is based on spherical harmonics. Our approach for \eqref{eq:TraceOrder2} is based on Escobar's. However, unlike Escobar's method which transforms the trace inequality \eqref{eq:SobolevTraceSpaceOrder2} to the trace inequality \eqref{eq:TraceOrder2}, our method is in the opposite direction. To be more precise, we show that we can obtain \eqref{eq:TraceOrder2} from \eqref{eq:SobolevTraceSpaceOrder2} after a suitable change of functions. In other words, the inequalities \eqref{eq:TraceOrder2} and \eqref{eq:SobolevTraceSpaceOrder2} are dual by the conformal equivalence between $\R^{n+1}_+$ and $\mathbb B^{n+1}$; see Section \ref{sec-Order2}. 

It turns out that we can do more with Escobar's idea. By exploiting further the conformal equivalence between $\R_+^{n+1}$ and $\mathbb B^{n+1}$, we are successful in providing a new proof for \eqref{eq:TraceOrder4}; see Section \ref{sec-Order4}. As noticed above, we prove \eqref{eq:TraceOrder4} by following a similar way that Escobar did, however, in an opposite direction. To this purpose, we make use of the following higher order Sobolev trace inequality in $\R_+^{n+1}$
\begin{equation}\label{eq:SobolevTraceSpaceOrder4}
2 \frac{\Gamma(\frac{n+3}2)}{\Gamma(\frac{n-3}2)} \omega_n^{3/n} \Big(\int_{\R^n} |U(x,0)|^{\frac{2n}{n-3}} dx\Big)^{\frac{n-3}n} \leqslant \int_{\R^{n+1}_+} |\Delta U(x,y)|^2 dx dy
\end{equation}
for functions $U$ having $\partial_y U(x,0)=0$. Furthermore, equality in \eqref{eq:SobolevTraceSpaceOrder4} holds if, and only if, $U$ is a biharmonic extension of a function of the form
\[c \big( 1 +  |\xi - z_0|^2 \big)^{-(n-3)/2},\]
where $c$ is a constant, $\xi \in \R^n$, $z_0 \in \R^n$, and $U$ also fulfills the boundary condition $\partial_y U(x,0)=0$. We believe that \eqref{eq:SobolevTraceSpaceOrder4} is already known but we are unable to find a reference for it until recently J. Case nicely informed us that \eqref{eq:SobolevTraceSpaceOrder4} can be derived from a general result in \cite{Case2018}. Therefore, we shall discuss Case's general result and provide a new proof for \eqref{eq:SobolevTraceSpaceOrder4} in Appendix \ref{apd-SobolevTraceSpace4}. In the last part of Section \ref{sec-Order4}, we also demonstrate that by using Beckner type trace inequality in Theorem \ref{thm2BecknerTypeNoWeight}, we can also recover \eqref{eq:TraceOrder4}. Compared to Escobar's approach, the analysis in Beckner's approach is less involved.

We note that Neumann's boundary condition for functions satisfied by \eqref{eq:SobolevTraceSpaceOrder4} comes from similar boundary conditions for functions satisfied by \eqref{eq:TraceOrder4}. Without restricting to the upper half-space $\R_+^{n+1}$, the following trace inequality is known
\begin{equation}\label{eq:SobolevTraceSpace}
2c_\alpha \omega_n^{(2\alpha-1)/n} \Big(\int_{\R^n} |U(x,0)|^{\frac{2n}{n+1-2\alpha}} dx\Big)^{\frac{n+1-2\alpha }n} \leqslant \int_{\R^{n+1} } U(x,y) (-\Delta)^\alpha U(x,y) dx dy 
\end{equation}
with 
\begin{equation}\label{eq:CAlpha}
c_\alpha = \sqrt \pi \frac{\Gamma(\alpha)\Gamma( \frac{n-1}2 +\alpha)}{\Gamma(\frac{n+1}2 - \alpha) \Gamma(\alpha - \frac 12)};
\end{equation}
see \cite{EL12}. We note that $c_1$ and $c_2$ are exactly the sharp constants in \eqref{eq:SobolevTraceSpaceOrder2} and in \eqref{eq:SobolevTraceSpaceOrder4} respectively. We note that the extra coefficient $2$ on the left hand side of \eqref{eq:SobolevTraceSpace} appears because the integral on the right hand side is over $\R^{n+1}$. It is our hope that there are dual trace inequalities of order six and this is the content of the second part of the paper.

To derive a suitable trace inequality of order six on $\R_+^{n+1}$, we revisit \eqref{eq:SobolevTraceSpace} when $\alpha=3$ and by a simple calculation, we expect that the following equality should hold
\begin{equation}\label{eq:SobolevTraceSpace6Expected}
\frac 83 \frac{\Gamma(\frac{n+5}2)}{\Gamma(\frac{n-5}2)} \omega_n^{5/n} \Big(\int_{\R^n} |U(x,0)|^{\frac{2n}{n-5}} dx\Big)^{\frac{n-5}n} \leqslant \int_{\R_+^{n+1} } |\nabla \Delta U|^2 (x,y) dx dy 
\end{equation}
for suitable function $U$ sufficiently smooth up to the boundary and decaying fast enough at infinity. Inspired by \cite{casechang2013}, we look for trace inequalities of order six for functions $U$ satisfying certain Neumann's boundary conditions. We shall prove the following trace inequality on the half space.

\begin{theorem}[Sobolev trace inequality of order six on $\R_+^{n+1}$]\label{thmTraceSpaceOrder6}
Let $U\in W^{3,2}(\overline{\R_+^{n+1}})$ be satisfied the Neumann boundary condition
\begin{subequations}\label{eq:6SpaceNeumannCondition}
\begin{align}
\partial_y U(x,0)=0, \quad \partial^2_y U(x,0)=\lambda \Delta_x U(x,0).
\tag*{\eqref{eq:6SpaceNeumannCondition}$_\lambda$}
\end{align}
\end{subequations}
Then we have the sharp trace inequality
\begin{subequations}\label{eq:TraceSpaceOrder6}
\begin{align}
(3\lambda^2-2\lambda +3) \frac{\Gamma(\frac{n+5}2)}{\Gamma(\frac{n-5}2)} \omega_n^{5/n} \Big(\int_{\R^n} |U(x,0)|^{\frac{2n}{n-5}} dx\Big)^{\frac{n-5}n} \leqslant \int_{\R^{n+1}_+} |\nabla \Delta U(x,y)|^2 dx dy.
\tag*{\eqref{eq:TraceSpaceOrder6}$_\lambda$}
\end{align}
\end{subequations}
Moreover, equality in \eqref{eq:TraceSpaceOrder6}$_\lambda$ holds if, and only if, $U$ is a triharmonic extension of a function of the form
\[
c \big( 1 +  |x - x_0|^2 \big)^{-(n-5)/2},
\]
where $c>0$ is a constant, $x \in \R^n$, $x_0$ is some fixed point in $\R^n$, and $U$ fulfills the boundary condition \eqref{eq:6SpaceNeumannCondition}$_\lambda$.
\end{theorem}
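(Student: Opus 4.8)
The plan is to first minimize the right-hand side of \eqref{eq:TraceSpaceOrder6}$_\lambda$ over all admissible extensions of a fixed boundary datum $f$, identify that minimum with a constant multiple of $\int_{\R^n} f(-\Delta)^{5/2}f\,dx$, and then invoke the sharp fractional Sobolev inequality \eqref{eq:SobolevSpaceOrderS/2} on $\R^n$. Writing $f = U(\cdot,0)$, set
\[
I(f) = \inf \lt\{ \int_{\R_+^{n+1}} |\nabla \Delta U|^2 \, dxdy \rt\},
\]
the infimum running over all $U \in W^{3,2}(\overline{\R_+^{n+1}})$ with $U(\cdot,0) = f$ and satisfying \eqref{eq:6SpaceNeumannCondition}$_\lambda$. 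The theorem then reduces to the identity
\[
I(f) = (3\lambda^2 - 2\lambda + 3) \int_{\R^n} f (-\Delta)^{5/2} f \, dx ,
\]
since \eqref{eq:SobolevSpaceOrderS/2} with $s = 5/2$ (valid for $n > 5$, its sharp constant $\Gamma(\tfrac{n+5}2)/\Gamma(\tfrac{n-5}2)\,\omega_n^{5/n}$ being consistent with the constant $c_3$ in \eqref{eq:CAlpha}) will turn this identity into \eqref{eq:TraceSpaceOrder6}$_\lambda$, and the equality case will follow by tracing equality through the two steps.

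To establish the identity I would Fourier transform in the tangential variable $x$, writing $\xi$ for the dual variable, $r = |\xi|$ and $\phi(y) = \widehat U(\xi,y)$. Since the tangential and normal parts of $\nabla \Delta U$ transform to $i\xi(\partial_y^2 - r^2)\phi$ and $(\partial_y^3 - r^2\partial_y)\phi$, Plancherel's theorem reduces $\int_{\R_+^{n+1}} |\nabla \Delta U|^2\,dxdy$ to a fixed multiple of $\int_{\R^n} E_r[\widehat U(\xi,\cdot)]\,d\xi$, where
\[
E_r[\phi] = \int_0^\infty \lt( r^2 \big| \phi'' - r^2\phi \big|^2 + \big| \phi''' - r^2\phi' \big|^2 \rt) dy ,
\]
while \eqref{eq:6SpaceNeumannCondition}$_\lambda$ becomes $\phi(0) = \widehat f(\xi)$, $\phi'(0) = 0$ and $\phi''(0) = -\lambda r^2 \widehat f(\xi)$. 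Minimizing $E_r$ over $\phi \in W^{3,2}(0,\infty)$ with these three Dirichlet-type data prescribed, a routine integration by parts shows that every boundary term generated at $y = 0$ is a multiple of $\phi(0)$, $\phi'(0)$ or $\phi''(0)$, so no natural boundary condition survives and the Euler--Lagrange equation is the fibered triharmonic equation $(\partial_y^2 - r^2)^3 \phi = 0$. Among its solutions decaying as $y \to \infty$, namely $\phi(y) = (a + by + cy^2)e^{-ry}$, the three constraints force $a = \widehat f(\xi)$, $b = r\widehat f(\xi)$ and $c = \tfrac12(1-\lambda)r^2\widehat f(\xi)$; back on $\R_+^{n+1}$ this minimizer is exactly the triharmonic extension of $f$ obeying \eqref{eq:6SpaceNeumannCondition}$_\lambda$.

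It then remains to evaluate $E_r$ at this $\phi$. Writing $\phi(y) = \widehat f(\xi)\,p(ry)e^{-ry}$ with $p(t) = 1 + t + \tfrac{1-\lambda}2 t^2$, one computes $\phi'' - r^2\phi = \widehat f(\xi)\,r^2(p'' - 2p')e^{-ry}$ and $\phi''' - r^2\phi' = -\widehat f(\xi)\,r^3(3p'' - 2p')e^{-ry}$, using $p''' \equiv 0$; after the substitution $t = ry$ this gives
\[
E_r[\phi] = |\widehat f(\xi)|^2\, r^5 \int_0^\infty \Big( (p'' - 2p')^2 + (3p'' - 2p')^2 \Big) e^{-2t}\, dt .
\]
Since $p'' = 1 - \lambda$ and $p' = 1 + (1-\lambda)t$, the integrand is a quadratic polynomial in $t$; using $\int_0^\infty e^{-2t}dt = \tfrac12$, $\int_0^\infty t\,e^{-2t}dt = \tfrac14$, $\int_0^\infty t^2 e^{-2t}dt = \tfrac14$ and a short simplification, the integral equals $3\lambda^2 - 2\lambda + 3$, so $E_r[\phi] = (3\lambda^2 - 2\lambda + 3)\,r^5 |\widehat f(\xi)|^2$; integrating in $\xi$ yields the claimed formula for $I(f)$. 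As a consistency check, $3\lambda^2 - 2\lambda + 3 = \tfrac13(3\lambda - 1)^2 + \tfrac83 \geqslant \tfrac83$, which matches the weaker bound obtained by even reflection across $\{y = 0\}$ from \eqref{eq:SobolevTraceSpace} with $\alpha = 3$, with equality exactly at the natural value $\lambda = \tfrac13$. Finally, equality in \eqref{eq:TraceSpaceOrder6}$_\lambda$ forces equality in \eqref{eq:SobolevSpaceOrderS/2} --- so that $f$ is $c(1 + |x - x_0|^2)^{-(n-5)/2}$ up to the conformal group --- together with $U$ attaining $I(f)$, i.e. $U$ being the triharmonic extension of $f$ satisfying \eqref{eq:6SpaceNeumannCondition}$_\lambda$, which is the stated characterization.

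The step I expect to be most delicate is the bookkeeping of the boundary terms at $y = 0$, needed both to rule out spurious natural boundary conditions and to confirm that the constrained fibered minimizer is precisely the prescribed triharmonic extension; alongside this, one must justify that the fibered infimum is attained and supply a density argument reducing a general $U \in W^{3,2}(\overline{\R_+^{n+1}})$ to smooth, rapidly decaying competitors, and carry out --- more tediously than conceptually --- the algebra producing the coefficient $3\lambda^2 - 2\lambda + 3$.
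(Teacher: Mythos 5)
Your proposal is correct and follows essentially the same route as the paper: a Fourier transform in $x$ reduces the energy to a fibered one-dimensional problem whose (tri)harmonic minimizer is the profile $(1+t+\tfrac{1-\lambda}{2}t^2)e^{-t}$, the constant $3\lambda^2-2\lambda+3$ comes out of the same explicit integral, and the sharp fractional Sobolev inequality with $s=5/2$ plus minimality finishes the argument, including the equality case. The only cosmetic difference is that you identify the extension via the Euler--Lagrange equation of the fibered energy, whereas the paper solves the sixth-order ODE with the prescribed initial data and boundedness and invokes attainment of the half-space minimization problem; the content is the same.
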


It is easy to see that $3\lambda^2-2\lambda +3 \geqslant 8/3$ with equality if $\lambda = 1/3$. Hence the sharp constant in \eqref{eq:TraceSpaceOrder6}$_\lambda$ is usually greater than that of \eqref{eq:SobolevTraceSpace6Expected}. We are aware that in the literature the Neumann boundary condition of the form \eqref{eq:6SpaceNeumannCondition}$_\lambda$ has already been used, for example, in  a work by Chang and Yang \cite{rayyang2013}. Once we can establish \eqref{eq:TraceSpaceOrder6}$_\lambda$, we hope that we can establish a similar trace inequality on $\mathbb B^{n+1}$ by using the natural conformal mapping between $\R_+^{n+1}$ and $\mathbb B^{n+1}$. By way of establishing the following trace inequality on $\mathbb B^{n+1}$, we shall prove that this is indeed the case.

\begin{theorem}[Sobolev trace inequality of order six]\label{thmTraceOrder6}
Let $f\in C^\infty(\bS^n)$ with $n> 5$ and suppose $v$ is a smooth extension of $f$ in the unit ball $\bB^{n+1}$, which also satisfies the boundary conditions
\begin{equation}\label{eq:boundarycond6}
\partial_ \nu v \big|_{\bS^n} = -\frac{n-5}2 f,\qquad \partial_ \nu^2 v \big|_{\bS^n} = \frac13 \widetilde \Delta f + \frac{(n-5)(n-6)}6 f.
\end{equation}
Then the following inequality holds
\begin{equation}\label{eq:TraceOrder6}
\begin{aligned}
\frac 83 \frac{\Gamma(\frac{n+5}2)}{\Gamma(\frac{n-5}2)}& \omega_n^{5/n} \Big(\int_{\bS^n} |f|^{\frac{2n}{n-5}}d\omega \Big)^{\frac{n-5}n}\\
 \leqslant &\int_{\bB^{n+1}} |\nabla \Delta v|^2 dx 
 +c_n^{(1)}\int_{\bS^n} (\widetilde\Delta f)^2 d\omega
+c_n^{(2)}  \int_{\bS^n} |\widetilde{\nabla} f|^2 d\omega 
+ c_n^{(3)} \int_{\bS^n} |f|^2 d\omega 
\end{aligned}
\end{equation}
with 
\begin{equation}\label{eq:CoefficientC_n}
\left\{
\begin{split}
c_n^{(1)}=&8(n+3)/9, \\
c_n^{(2)}=&4(n^3+n^2-21n-9)/9, \\
c_n^{(3)}=&(n-5)(n-3)(n+3)(n^2+4n-9)/18.
\end{split}
\right.
\end{equation}
Moreover, equality in \eqref{eq:TraceOrder6} holds if, and only if, $v$ is a triharmonic extension of a function of the form
\[
f_{z_0} (\xi) = c|1-\langle z_0, \xi \rangle |^{-(n-5)/2},
\]
where $c>0$ is a constant, $\xi \in \mathbb S^n$, $z_0$ is some fixed point in the interior of $\mathbb B^{n+1}$, and $v$ fulfills the boundary condition \eqref{eq:boundarycond6}.
\end{theorem}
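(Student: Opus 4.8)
\textbf{Proof proposal for Theorem \ref{thmTraceOrder6}.}

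The plan is to transfer the half-space inequality \eqref{eq:TraceSpaceOrder6}$_\lambda$ with the optimal choice $\lambda = 1/3$ to the ball by means of the conformal diffeomorphism between $\R_+^{n+1}$ and $\bB^{n+1}$, following the same ``opposite direction'' strategy used in Sections \ref{sec-Order2} and \ref{sec-Order4} for \eqref{eq:TraceOrder2} and \eqref{eq:TraceOrder4}. First I would fix the standard conformal map $\Phi : \bB^{n+1} \to \R_+^{n+1}$ (the composition of a translation sending a boundary point to the origin with an inversion), record the conformal factor, and set up the correspondence $U = J_\Phi^{\beta} (v \circ \Phi^{-1})$ between a smooth extension $v$ of $f$ on $\bB^{n+1}$ and a function $U$ on $\R_+^{n+1}$, where the weight $\beta$ is dictated by the conformal weight $(n-5)/2$ attached to order six. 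The point is that the left-hand side of \eqref{eq:TraceSpaceOrder6}$_{1/3}$, namely $(8/3)\frac{\Gamma((n+5)/2)}{\Gamma((n-5)/2)}\omega_n^{5/n}\big(\int_{\R^n}|U(x,0)|^{2n/(n-5)}dx\big)^{(n-5)/n}$, is conformally invariant under this substitution and equals the left-hand side of \eqref{eq:TraceOrder6} with the spherical $L^{2n/(n-5)}$ norm of $f$; this is the routine part, exactly as in the lower-order cases.

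The substantive computation is the transformation of the Dirichlet-type energy $\int_{\R_+^{n+1}}|\nabla\Delta U|^2\,dxdy$. Since $|\nabla \Delta U|^2 = \nabla U \cdot \nabla \Delta^2 U$ up to a divergence whose boundary term is controlled by the Neumann conditions, equivalently $\int |\nabla \Delta U|^2 = \int U(-\Delta)^3 U$ plus boundary contributions, I would integrate by parts three times and keep careful track of all boundary integrals over $\R^n = \partial\R_+^{n+1}$; these are then rewritten via the conformal map as integrals over $\bS^n$ involving $f$ and its spherical derivatives. Under the conformal change, $\int_{\R_+^{n+1}}|\nabla\Delta U|^2$ becomes $\int_{\bB^{n+1}}|\nabla\Delta v|^2$ \emph{plus} explicit boundary terms on $\bS^n$; the geometry of $\bS^n$ enters through the intertwining relation between the flat Laplacian restricted to the boundary and the spherical operator $-\widetilde\Delta + (\text{curvature corrections})$, producing the three spherical integrals $\int_{\bS^n}(\widetilde\Delta f)^2$, $\int_{\bS^n}|\widetilde\nabla f|^2$, $\int_{\bS^n}f^2$. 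To pin down $c_n^{(1)}, c_n^{(2)}, c_n^{(3)}$ I would either push the direct conformal computation through — using that the conformal factor is a power of $|1 - \langle z_0,\xi\rangle|$ or $(1+|x|^2)$ and that its normal and tangential derivatives on $\bS^n$ are constant multiples of $f$ — or, more robustly, determine the quadratic form on $\bS^n$ by testing on spherical harmonics: expand $f = \sum_k Y_k$, solve the triharmonic extension problem with the boundary conditions \eqref{eq:boundarycond6} mode by mode (each $Y_k$ extends by a combination of $r^k$ and lower radial powers fixed by the two Neumann conditions), compute both sides as sums over $k$, and match coefficients of $\lambda_k = k(k+n-1)$ in the resulting polynomial in $k$; the equality case then corresponds to the lowest modes $k=0,1$, matching the stated extremal functions $f_{z_0}$.

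The main obstacle I anticipate is precisely the bookkeeping in this boundary-term computation: with a sixth-order operator there are three integrations by parts, hence several boundary integrals involving $U, \partial_y U, \partial_y^2 U$ and their tangential Laplacians on $\R^n$, and the Neumann conditions \eqref{eq:6SpaceNeumannCondition}$_{1/3}$ must be substituted consistently and then transported through the conformal factor; an arithmetic slip anywhere changes $c_n^{(2)}$ or $c_n^{(3)}$. The spherical-harmonics route is the safer check because it reduces everything to one-variable ODE solving plus polynomial identification in $k$, and it simultaneously verifies sharpness: the inequality \eqref{eq:TraceSpaceOrder6}$_{1/3}$ is sharp on $\R_+^{n+1}$, conformal invariance of both sides (after the extra boundary terms are absorbed into the definition of the ball-side energy) transfers sharpness to \eqref{eq:TraceOrder6}, and the extremizers are exactly the conformal images of the half-space extremizers $c(1+|x-x_0|^2)^{-(n-5)/2}$, i.e. the triharmonic extensions of $f_{z_0}$ subject to \eqref{eq:boundarycond6}.
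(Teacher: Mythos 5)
Your primary route (transfer \eqref{eq:TraceSpaceOrder6}$_{1/3}$ to the ball via the conformal map and compute the energy defect as a boundary quadratic form) is precisely the route the paper sets up and then deliberately abandons: Subsection \ref{sec-Order6}.2 does carry out the part you would also need — checking that the ball conditions \eqref{eq:boundarycond6} correspond under $V=(v\circ B)\Phi^{(n-5)/2}$ to the Neumann conditions \eqref{eq:6SpaceNeumannCondition}$_{1/3}$ — but the authors judge the sixth-order analogue of the order-two and order-four energy computations ``rather involved'' and instead prove Theorem \ref{thmTraceOrder6} by the Beckner-type argument that you relegate to a ``safer check.'' Concretely, the paper minimizes $\int_{\bB^{n+1}}|\nabla\Delta w|^2$ over $\mathscr W_f$, solves $L_k^3f_k=0$ mode by mode with the three boundary conditions to get the explicit radial profiles, computes $\int_{\bB^{n+1}}|\nabla\Delta u|^2$ as a sum over $k$ (formula \eqref{eq:6BecknerNablaDelta^2}), and then compares with $\frac83\sum_k\frac{\Gamma(k+n/2+5/2)}{\Gamma(k+n/2-5/2)}\int_{\bS^n}|Y_k|^2d\omega$; the sharp constant enters through the fractional Sobolev inequality \eqref{eq:SobolevSpaceOrderS/2} with exponent $5/2$ together with the dual-spectral identity \eqref{eq:GN} on the sphere, not through the half-space trace inequality itself, and the passage from the triharmonic minimizer to an arbitrary $v$ satisfying \eqref{eq:boundarycond6} is by the minimization property. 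What your Escobar-style route would buy is a genuinely conformal derivation (and it would bypass the minimization step, since Theorem \ref{thmTraceSpaceOrder6} holds for every $W^{3,2}$ function with the Neumann data); what the paper's route buys is that all boundary bookkeeping collapses to one algebraic identity in $k$, namely the factorization of $\frac83\frac{\Gamma(k+n/2+5/2)}{\Gamma(k+n/2-5/2)}-\frac1{36}(n-5+2k)^2(n+3+2k)(12k^2+8kn+n^2-6n+9)$ as a quadratic polynomial in $k(n-1+k)$, which produces $c_n^{(1)},c_n^{(2)},c_n^{(3)}$ at once.

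Two points in your write-up need repair before either route is complete. First, your coefficient-identification step presupposes what has to be proved: that $\int_{\R^{n+1}_+}|\nabla\Delta U|^2-\int_{\bB^{n+1}}|\nabla\Delta v|^2$ is a \emph{local} boundary quadratic form in $f,\widetilde\nabla f,\widetilde\Delta f$ for every admissible $v$. Testing on spherical harmonics determines such a form only after its existence (and diagonality in $k$) is known, so you must either push the direct conformal computation through (the thing you hoped to avoid) or switch entirely to the paper's scheme, in which case you also need the identity \eqref{eq:GN} linking the spherical-harmonic sums to $(-\Delta)^{5/2}$ on $\R^n$, which your proposal never invokes. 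Second, the claim that ``the equality case corresponds to the lowest modes $k=0,1$'' is wrong: in \eqref{eq:TraceOrder6} the coefficient identity above is an exact equality for \emph{all} $k$, so no modes are penalized; equality holds exactly when equality holds in the underlying sharp inequality on $\R^n$ (or $\R^{n+1}_+$), whose extremizers are the bubbles $c(1+|x-x_0|^2)^{-(n-5)/2}$, and their lifts $f_{z_0}=c|1-\langle z_0,\xi\rangle|^{-(n-5)/2}$ have nonzero components in every spherical-harmonic mode when $z_0\neq0$. Your final sentence (sharpness transferred by conformal invariance, extremizers the conformal images of the half-space extremizers subject to \eqref{eq:boundarycond6}) is the correct mechanism; the ``lowest modes'' remark should simply be deleted.
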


As we shall see in the proof of Theorem \ref{thmTraceOrder6} that the boundary condition \eqref{eq:boundarycond6} comes from the boundary condition \eqref{eq:6SpaceNeumannCondition}$_{1/3}$ and the sharp constant of \eqref{eq:TraceSpaceOrder6}$_{1/3}$ is exactly the sharp constant of \eqref{eq:TraceSpaceOrder6}$_{1/3}$, which is $( 8/3) (\Gamma(\frac{n+5}2)/\Gamma(\frac{n-5}2)) \omega_n^{5/n}$. Since the analysis in Beckner's approach is much less involved compared with Escobar's approach, to prove \eqref{eq:TraceOrder6}, we revisit Beckner's approach to prove a Beckner type trace inequalities of order six; see Theorems \ref{thm6BecknerTypeNoWeight}. Then we use it to prove \eqref{eq:TraceOrder6} as demonstrated in Subsection \ref{subsec-6SobolevBall}.

As can be easily seen, Beckner's approach has several advantages when proving functional inequalities on balls and on spheres. This paper just provides another example to highlight its merits.  Another example, recently announced by Xiong \cite{Xiong2018}, concerns a derivation of the sharp Moser--Trudinger--Onofri inequalities from the fractional Sobolev inequalities. The work of Xiong generalizes a similar result for spheres of lower dimensions recently obtained by Chang and Wang in \cite{ChangWang2017}. We note that Xiong also used spherical harmonics instead of using Branson's dimensional continuation argument which becomes increasing delicate when the dimension is large as hightlighted in \cite[Remark 2]{ChangWang2017}.

After completing this paper, it has just come to our attention that, recently in a paper continuing his work on the boundary operators associated to the Paneitz operator in \cite{Case2018}, Jeffrey Case and his co-author also obtained some sharp Sobolev trace inequalities involving the interior $W^{3,2}$-seminorm, including an analogue of the Lebedev--Milin inequality on several standard models of manifolds of dimension six; see \cite{Case}. Following \cite{Case2018}, their approach is based on energy inequalities related to conformally covariant boundary operators associated to the sixth-order GJMS operator found in their paper. Therefore, it is completely different from ours.

The rest of the paper consists of four sections. Section \ref{sec-Pre} is devoted to preliminaries. Sections \ref{sec-Order2} and \ref{sec-Order4} are devoted to proofs of \eqref{eq:TraceOrder2} and \eqref{eq:TraceOrder4} based on Escobar's approach. Beckner type trace inequalities with or without a weight are also proved in theses sections; see Theorems \ref{thm2BecknerTypeWeight}, \ref{thm2BecknerTypeNoWeight}, \ref{thm4BecknerTypeNoWeight}, and \ref{thm6BecknerTypeNoWeight}. We also consider the limiting cases, known as the Lebedev--Milin inequality, in these sections as well; see Theorems \ref{thmSMOrder2} and \ref{thmSMOrder4}. Section \ref{sec-Order6} is devoted to a proof of \eqref{eq:TraceOrder6} based on Beckner's approach; see Theorem \ref{thm6BecknerTypeNoWeight}. A Lebedev--Milin type inequality of order six is also considered in this section; see Theorem \ref{thmSMOrder6}. Finally, in Section \ref{sec-Order8+}, we state sharp Sobolev trace inequalities of order eight on $\R_+^{n+1}$ and $\bB^{n+1}$ and Lebedev--Milin inequality of order eight without proofs; see Theorems \ref{thmTraceSpaceOrder8}, \ref{eq:TraceOrder8}, and  \ref{thmSMOrder8}.

\tableofcontents

We should point out that throughout out the paper, there are arguments and computations more or less known to experts in this field. However, we aim to include them for the reader's convenience while trying to maintain the paper in a reasonable length.

As a final comment before closing this section, it is worth emphasizing that in order to avoid any possible mistake, most of computation in the proof of Proposition \ref{apx-propExtension6}, in Subsections \ref{subsec-6Beckner} and \ref{subsec-6SobolevBall}, and especially in Section \ref{sec-Order8+} was done by using a scientific computer software. This allows us to carry out a similar research for higher order Sobolev trace inequalities, for example, Sobolev trace inequality of order ten on $\bB^{n+1}$, if there is strong motivation to work.


\section{Preliminaries} 
\label{sec-Pre}

First we need some notations and convention used throughout the paper. We often write $ X = (x,y) \in \R^{n+1}$ and denote $\mathbb B^{n+1} = \{X\in \R^{n+1}\,:\, |X| < 1\}$. By $\R_+^{n+1}$ we mean the set $\{(x,y) \in \R^{n+1} : y > 0\}$. We shall also denote by $\delta$ Kronecker's symbol and therefore Einstein's summation convention will be used often. 

Now we discuss the conformal equivalence between $\mathbb B^{n+1}$ and $\R_+^{n+1}$. To see why these sets are conformally equivalent, we work on $\R^{n+2}$. Therefore, a point $(x,y) \in \R^{n+1}$ will be identified with the point $(x,y,0)$ in $\R^{n+2}$. Furthermore, any point in $\R^{n+2}$ will be denoted by $(x,y,z)$ with $y, z, \in \R$ or by $(X,z)$ with $z \in \R$. 

Consider the stereographic projection $\mathcal S :\R^{n+1} \to \bS^{n+1} \subset \R^{n+2}$ given by
\[
\mathcal S(x,y) = \Big(\frac{2x}{1+ |x|^2 + y^2}, \frac{2y}{1+ |x|^2 + y^2}, \frac{|x|^2 + y^2 - 1}{1+ |x|^2+ y^2}\Big).
\]
The inverse of $S$, denoted by $S^{-1}$, is
\[
\mathcal S^{-1}(x,y,z) = \Big(\frac{x}{1 - z}, \frac{y}{1-z} \Big).
\]
We also denote by $R$ a quarter-turn of $\mathbb S^{n+1}$ in the plan containing the last two coordinate axes $Oy$ and $Oz$ in $\R^{n+2}$, that maps $(0,1,0)$ to $(0,0,1)$. Clearly, such a map $R$ is given by
\[
R (x,y,z) = (x, z, -y).
\]
Then we define $B: \R_+^{n+1} \to \mathbb B^{n+1}$ by
\[
B = \mathcal S^{-1} \circ R \circ \mathcal S \big|_{\mathbb B^{n+1}}.
\] 
It is not hard to verify that the mapping $B$ is well-defined and conformal. Furthermore, it is immediate to see that
\[
B(x,y) = \left(\frac{2x}{(1+y)^2 + |x|^2}, \frac{|x|^2+y^2-1}{(1+y)^2 + |x|^2}\right).
\]
We note that the mapping $B$ takes a similar form to the mapping $F^{-1}$ in \cite[p. 691]{Escobar88}. Clearly, the Jacobian matrix of $B$, denoted by $DB$, is given by
\[
\begin{split}
DB&(x,y)= \frac 2{\big[(1+y)^2 + |x|^2 \big]^2} \\
\times &
{\setstretch{2.00}
\left(
\begin{array}{ccc:c}
&\cdots & & -2x_1(1+y) \\
\vdots & \big[(1+y)^2 + |x|^2 \big] \delta_{ij} - 2x_i x_j & \vdots & \vdots \\
 & \cdots & & -2x_n(1+y) \\
\hdashline 
2x_1(1+y) & \cdots & 2x_n(1+y) & (1+y)^2-|x|^2
\end{array}
\right) } 
.
\end{split}
\]
Hence in short we can rewrite
\begin{equation}\label{eq:JacobianMatrixOfB}
DB(x,y) =
\frac 2{\big[(1+y)^2 + |x|^2 \big]^2}
{\setstretch{2.00}
\left(
\begin{array}{c:c}
 \big[(1+y)^2 + |x|^2 \big] I_n - 2x \otimes x & -2x(1+y) \\
\hdashline 
2x^t(1+y) & (1+y)^2-|x|^2
\end{array}
\right) } 
.
\end{equation}
We can easily verify that
\begin{equation}\label{eq:DBDBt}
DB \cdot DB^t = \Big( \frac2{(1+y)^2 + |x|^2} \Big)^2 I_{n+1},
\end{equation}
where $DB^t$ denotes the transpose of $DB$. From this and throughout this paper, if we denote
\[
\Phi (X) = \frac2{(1+y)^2 + |x|^2},
\]
then it is not hard to verify that the Jacobian of $B$ is given by
\[
J_B(X) = \Phi (X)^{n+1} .
\]
For simplicity, we shall also use the same letter $\mathcal S$ to denote the stereographic projection from $\R^n$ to $\bS^n$. Clearly, in this new perspective, $\mathcal S$ is given by
\[
\mathcal S(x) = \Big(\frac{2x}{1+ |x|^2}, -\frac{1 - |x|^2}{1+ |x|^2}\Big).
\]
Note that $\mathcal S(x) = B(x,0)$ and therefore the Jacobian of $\mathcal S$ is
\[
J_{\mathcal S}(x) = \Big(\frac{2}{1+ |x|^2}\Big)^n.
\]
We have the following simple observation.

\begin{lemma}\label{lem:NablaDelta}
Let $a \in \R$, then we have
\[
\nabla \Phi (X)^{a} = -a \Phi (X)^{a+1} ( x, 1+y )
\]
and
\[
\Delta \Phi (X)^{a} = -a(n-1-2a) \Phi (X)^{a+1}.
\]
\end{lemma}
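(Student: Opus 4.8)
The plan is to prove Lemma~\ref{lem:NablaDelta} by a direct computation, treating $\Phi(X)^a = 2^a\big[(1+y)^2+|x|^2\big]^{-a}$ as a composite function of the variable $X = (x,y)\in\R_+^{n+1}$ and differentiating with the chain rule. First I would record the elementary fact that if $g(X) = (1+y)^2 + |x|^2$, then $\partial_{x_i} g = 2x_i$ and $\partial_y g = 2(1+y)$, so that $\nabla g = 2(x,1+y)$ in the notation of the excerpt; this is exactly the vector appearing in the statement. Since $\Phi^a = 2^a g^{-a}$, the chain rule gives $\nabla \Phi^a = 2^a\cdot(-a)g^{-a-1}\nabla g = -a\, 2^a g^{-a-1}\cdot 2(x,1+y)$. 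Recognizing $2^{a+1}g^{-a-1} = \Phi^{a+1}$ then yields $\nabla\Phi^a = -a\,\Phi^{a+1}(x,1+y)$, which is the first identity.

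For the second identity I would compute the Laplacian by taking the divergence of the gradient just obtained, i.e. $\Delta\Phi^a = \operatorname{div}\big(-a\,\Phi^{a+1}(x,1+y)\big)$. Expanding by the product rule for divergence, $\operatorname{div}\big(\Phi^{a+1}(x,1+y)\big) = \nabla(\Phi^{a+1})\cdot(x,1+y) + \Phi^{a+1}\operatorname{div}(x,1+y)$. The divergence of the vector field $(x,1+y) = (x_1,\dots,x_n,1+y)$ in $\R^{n+1}$ is simply $n+1$ (one from each of the $n$ coordinates $x_i$ plus one from $1+y$). For the first term I apply the already-established first identity with exponent $a+1$ in place of $a$, so $\nabla\Phi^{a+1} = -(a+1)\Phi^{a+2}(x,1+y)$, and therefore $\nabla(\Phi^{a+1})\cdot(x,1+y) = -(a+1)\Phi^{a+2}\,|(x,1+y)|^2 = -(a+1)\Phi^{a+2}\big((1+y)^2+|x|^2\big)$.

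The final step is the bookkeeping: note that $(1+y)^2 + |x|^2 = 2/\Phi(X) = 2\Phi(X)^{-1}$, so $\Phi^{a+2}\big((1+y)^2+|x|^2\big) = 2\Phi^{a+1}$. Substituting everything back, $\Delta\Phi^a = -a\big[-(a+1)\cdot 2\Phi^{a+1} + (n+1)\Phi^{a+1}\big] = -a\big(n+1 - 2(a+1)\big)\Phi^{a+1} = -a(n-1-2a)\Phi^{a+1}$, as claimed. I do not anticipate any genuine obstacle here; the lemma is purely a computational observation, and the only point requiring a little care is keeping the exponents straight when reusing the gradient formula with $a+1$ in the divergence step, together with the substitution $(1+y)^2+|x|^2 = 2\Phi^{-1}$ that makes the powers of $\Phi$ collapse correctly.
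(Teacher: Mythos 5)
Your computation is correct, and it is exactly the direct verification the paper has in mind (the paper's proof is simply "this is elementary and follows from direct verification"). The gradient formula, the use of $\operatorname{div}(x,1+y)=n+1$, and the substitution $(1+y)^2+|x|^2 = 2\Phi^{-1}$ all check out and yield $-a(n-1-2a)\Phi^{a+1}$ as claimed.
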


\begin{proof}
This is elementary and follows from direct verification.
\end{proof}

For simplicity, let us emphasize that we sometime write the composition $f \circ g$ evaluated at a point $p$, that is $(f \circ g)(p)$, by $f(g)$ if no confusion occurs.

\begin{lemma}\label{lem:Identity}
We have the following identity
\[\begin{split}
\Phi ^{-2} &\Delta(F \circ B) = (\Delta F) (B )-(n-1)\Big(\sum_{j=1}^n (\partial_j F)(B ) x_j -(\partial_{n+1}F)(B )(1+y)\Big). 
\end{split}\]
In other words, we have
\[
\Phi ^{-2} \Delta(F \circ B) = (\Delta F) (B ) + (n-1) \langle \nabla F(B ), (-x,1+y) \rangle. 
\]
\end{lemma}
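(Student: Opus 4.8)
The plan is to compute $\Delta(F\circ B)$ directly using the chain rule and the explicit form of the Jacobian matrix $DB$ recorded in \eqref{eq:JacobianMatrixOfB}, together with the conformality relation \eqref{eq:DBDBt}. Writing $B=(B^1,\dots,B^{n+1})$ and using Einstein summation, the chain rule gives
\[
\partial_k(F\circ B)=(\partial_\alpha F)(B)\,\partial_k B^\alpha,
\]
and differentiating once more,
\[
\Delta(F\circ B)=\partial_k\partial_k(F\circ B)=(\partial_\alpha\partial_\beta F)(B)\,\partial_kB^\alpha\,\partial_kB^\beta+(\partial_\alpha F)(B)\,\Delta B^\alpha.
\]
For the first term, $\partial_kB^\alpha\,\partial_kB^\beta$ is exactly the $(\alpha,\beta)$ entry of $DB\cdot DB^t$, which by \eqref{eq:DBDBt} equals $\Phi(X)^2\delta_{\alpha\beta}$; hence the second-order part contributes $\Phi^2(\Delta F)(B)$. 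Dividing through by $\Phi^2$ then isolates $(\Delta F)(B)$ plus the lower-order term $\Phi^{-2}(\partial_\alpha F)(B)\,\Delta B^\alpha$, so the whole identity reduces to showing
\[
\Phi^{-2}\,\Delta B^\alpha=(n-1)\,(-x,1+y)^\alpha,
\]
i.e. $\Delta B^j=(n-1)\Phi^2(-x_j)$ for $j\le n$ and $\Delta B^{n+1}=(n-1)\Phi^2(1+y)$.

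Next I would verify these two formulas for the components of $B$. Since $B^j(x,y)=x_j\Phi(X)$ for $j\le n$ and $B^{n+1}(x,y)=\tfrac12\big((|x|^2+y^2-1)\Phi(X)\big)$ — indeed $(|x|^2+y^2-1)/((1+y)^2+|x|^2)=\tfrac12(|x|^2+y^2-1)\Phi$ — one can compute $\Delta B^\alpha$ from Lemma \ref{lem:NablaDelta} and the product rule. For the tangential components, $\Delta(x_j\Phi)=x_j\Delta\Phi+2\nabla x_j\cdot\nabla\Phi=x_j\Delta\Phi+2\partial_j\Phi$; applying Lemma \ref{lem:NablaDelta} with $a=1$ gives $\Delta\Phi=-(n-3)\Phi^2$ and $\nabla\Phi=-\Phi^2(x,1+y)$, so $\partial_j\Phi=-\Phi^2x_j$, and therefore $\Delta(x_j\Phi)=-(n-3)\Phi^2x_j-2\Phi^2x_j=-(n-1)\Phi^2x_j$, as required. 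The normal component is handled the same way: writing $B^{n+1}=\tfrac12(|x|^2+y^2-1)\Phi$, one expands $\Delta\big((|x|^2+y^2-1)\Phi\big)$ using $\Delta(|x|^2+y^2-1)=2(n+1)$, $\nabla(|x|^2+y^2-1)=2(x,y)$, $\Delta\Phi=-(n-3)\Phi^2$, $\nabla\Phi=-\Phi^2(x,1+y)$, and collects terms; after using $|x|^2+(1+y)^2=2/\Phi$ the result collapses to $2(n-1)\Phi^2(1+y)$, giving $\Delta B^{n+1}=(n-1)\Phi^2(1+y)$. Substituting both back yields $\Phi^{-2}(\partial_\alpha F)(B)\Delta B^\alpha=(n-1)\big(-(\partial_jF)(B)x_j+(\partial_{n+1}F)(B)(1+y)\big)$, which is precisely the claimed lower-order term, and the second displayed form is just a rewriting of this as the inner product $\langle\nabla F(B),(-x,1+y)\rangle$.

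The only genuinely substantive point is the bookkeeping in the normal-component computation $\Delta B^{n+1}=(n-1)\Phi^2(1+y)$, where several terms of different homogeneity must cancel; the identity $|x|^2+(1+y)^2=2\Phi^{-1}$ is what makes it work, and it is worth stating that explicitly. Everything else is a mechanical consequence of the chain rule, the conformality identity \eqref{eq:DBDBt}, and Lemma \ref{lem:NablaDelta}; an alternative, perhaps cleaner, route is to invoke the standard conformal transformation law for the Laplacian under a conformal change with conformal factor $\Phi$, which in dimension $n+1$ reads $\Delta(F\circ B)=\Phi^{2}\big[(\Delta F)(B)+(\text{first-order terms in }\nabla F)\big]$, and then identify the first-order coefficients by evaluating on the coordinate functions $F(X)=x_j$ and $F(X)=y$, but I would prefer the direct computation above since it keeps the argument self-contained.
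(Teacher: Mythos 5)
Your proof is correct, and it takes a genuinely different route from the paper. The paper proves the lemma by brute force: it writes out the first partials $\partial_i(F\circ B)$ and $\partial_{n+1}(F\circ B)$ from the entries of $DB$, differentiates again, and verifies the identity through a term-by-term cancellation ($I_1+II_1$, \dots, $I_5+II_5$); it never invokes the conformality relation \eqref{eq:DBDBt} in this proof nor computes $\Delta B^\alpha$. You instead split $\Delta(F\circ B)$ via the chain rule into a Hessian part and a first-order part, dispose of the Hessian part in one line using $(DB\cdot DB^t)_{\alpha\beta}=\sum_k\partial_kB^\alpha\partial_kB^\beta=\Phi^2\delta_{\alpha\beta}$, and reduce everything to the two component identities $\Delta B^j=-(n-1)\Phi^2x_j$ and $\Delta B^{n+1}=(n-1)\Phi^2(1+y)$, which you verify correctly from Lemma \ref{lem:NablaDelta} and the relation $|x|^2+(1+y)^2=2\Phi^{-1}$ (I checked the normal-component bookkeeping: the $\Phi$-homogeneous terms $-2(n-3)\Phi-8\Phi+2(n+1)\Phi$ do cancel and the $\Phi^2(1+y)$ coefficient is $2(n-3)+4=2(n-1)$, as you claim). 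Your organization is shorter and makes the conformal structure do the work; what the paper's heavier computation buys is the explicit formulas \eqref{eq:Partial(i)F} and \eqref{eq:Partial(n+1)F} for the first derivatives of $F\circ B$, which are reused verbatim in the proof of Corollary \ref{cor:Identity} and later in the trace-inequality computations, so with your route one would still need a separate (short) argument for the gradient identity $\langle\nabla(F\circ B),(x,1+y)\rangle=\Phi\langle\nabla F(B),(-x,1+y)\rangle$.
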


\begin{proof}
For simplicity and from now on, we set
\[
M = (1+y)^2 + |x|^2.
\]
Under this convention, the Jacobian matrix of $B$ given in \eqref{eq:JacobianMatrixOfB} is simply
\[
DB(x,y) =
\begin{pmatrix}
\dfrac2M I_n -\dfrac{2x}{M} \otimes \dfrac{2x}M& -\dfrac{4x(1+y)}{M^2}\\
\dfrac{4x^t(1+y)}{M^2} &-\dfrac2M + \dfrac{4(1+y)^2}{M^2}
\end{pmatrix}.
\]
Using this matrix, we can easily calculate $\nabla (F \circ B)$. Indeed, for $i=1,2,...,n$, we have that
\begin{equation}\label{eq:Partial(i)F}
\partial_i(F \circ B) = \sum_{j=1}^n (\partial_j F)(B) \Big(\frac{2 \delta_i^j}M - \frac{4x_ix_j}{M^2}\Big) + (\partial_{n+1} F)(B) \frac{4x_i(1+y)}{M^2}
\end{equation}
and that
\begin{equation}\label{eq:Partial(n+1)F}
\partial_{n+1}( F \circ B ) =-\sum_{j=1}^n (\partial_j F)(B) \frac{4x_j(1+y)}{M^2} + (\partial_{n+1}F)(B)\Big( -\frac 2M + \frac{4(1+y)^2}{M^2} \Big).
\end{equation}
Using our preceding calculation, it is easy to calculate $\Delta ( F \circ B )$. Indeed, for each $1 \leqslant i \leqslant n$, from \eqref{eq:Partial(i)F} we have
\begin{align*}
\partial_i^2 ( F \circ B ) =& \sum_{l=1}^n\sum_{j=1}^n (\partial_{jl}F)(B)\Big(\frac{2 \delta_i^j}M - \frac{4x_ix_j}{M^2}\Big)\Big(\frac{2 \delta_i^l}M - \frac{4x_ix_l}{M^2}\Big)\\
& +\sum_{j=1}^n (\partial_j F)(B) \Big(\frac{-8x_i\delta_{i,j}}{M^2} -\frac{4x_j}{M^2} + \frac{16x_i^2x_j}{M^3}\Big)\\
& + \sum_{j=1}^n (\partial_{j,n+1} F)(B)\Big(\frac{2 \delta_i^j}M - \frac{4x_ix_j}{M^2}\Big)\frac{4x_i(1+y)}{M^2} \\
&+ 16 (\partial_{n+1}^2 F)(B) \frac{x_i^2(1+y)^2}{M^4} \\
&+4(\partial_{n+1} F)(B)\Big(\frac{1+y}{M^2}-\frac{4x_i^2 (1+y)}{M^3}\Big) .
\end{align*}
Hence
\begin{align*}
\sum_{i=1}^n \partial_i^2 ( F \circ B ) =& \sum_{j,l=1}^n (\partial_{jl}F)(B)\left(\frac{4\delta_{l,j}}{M^2}- \frac{16 (1+y)^2x_jx_l}{M^4}\right)\\
& +\sum_{j=1}^n (\partial_j F)(B) \left(\frac{-4(n+2)x_j}{M^2} + \frac{16|x|^2x_j}{M^3}\right)\\
& + 8\sum_{j=1}^n (\partial_{j,n+1} F)(B)\frac{(1+y)^2 -|x|^2}{M^4} x_j(1+y)\\
& + 16(\partial_{n+1}^2 F)(B) \frac{|x|^2(1+y)^2}{M^4} \\
&+ 4(\partial_{n+1} F)(B)\Big(\frac{n}{M^2}-\frac{4 |x|^2}{M^3}\Big) (1+y) \\
= & I_1 + I_2+ I_3+ I_4+I_5.
\end{align*}
We also have from \eqref{eq:Partial(n+1)F} the following
\begin{align*}
\partial_{n+1}^2 ( F \circ B )= &\sum_{j,l=1}^n (\partial_{jl}F)(B) \frac{16x_jx_l(1+y)^2}{M^4} -\sum_{j=1}^n ( \partial_j F)(B) \Big(\frac{4x_j}{M^2} -\frac{16x_j(1+y)^2}{M^3}\Big)\\
& -8\sum_{j=1}^n (\partial_{j,n+1}F)(B)\frac{(1+y)^2-|x|^2}{M^4}x_j(1+y)\\
& + 4(\partial_{n+1}^2 F)(B) \frac{((1+y)^2 -|x|^2)^2}{M^4}\\
&+4(\partial_{n+1}F)(B)\frac{(3|x|^2-(1+y)^2)(1+y)}{M^3} \\
= & II_1 + II_2+ II_3+ II_4+II_5.
\end{align*}
Observe that
\[
\left\{
\begin{split}
I_1 + II_1 =& \sum_{i=1}^n (\partial_{ii} F) (B),\\
I_2 + II_2 = &-\frac {4(n-1)}{M^2} \sum_{j=1}^n (\partial_j F)(B) , \\
I_3 + II_3 = & 0, \\
I_4 + II_4 = & (\partial_{n+1}^2 F) (B), \\
I_5 + II_5 = & \frac {4(n-1)}{M^2} (1+y) (\partial_{n+1} F) (B) .
\end{split}
\right.
\]
From this we obtain the desired identities.
\end{proof}

\begin{corollary}\label{cor:Identity}
There holds
\[
\langle \nabla (F\circ B) , (x, 1+y) \rangle= \Phi \langle \nabla F(B) , (-x, 1+y) \rangle
\]
and
\[\begin{split}
 \Delta(F \circ B) (X)= (\Delta F) (B (X)) \Phi ^2+ (n-1) \langle\nabla(F \circ B)(X), (x,1+y)\rangle \Phi ,
\end{split}\]
where $X = (x,y) \in \R^{n+1}$.
\end{corollary}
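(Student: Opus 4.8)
The plan is to derive both identities directly from the computations already assembled in the proof of Lemma \ref{lem:Identity}, so that essentially no new work is required beyond bookkeeping.

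For the first identity, I would start from the explicit formulas \eqref{eq:Partial(i)F} and \eqref{eq:Partial(n+1)F} for $\partial_i(F\circ B)$ and $\partial_{n+1}(F\circ B)$ in terms of the first-order derivatives of $F$ evaluated at $B$. Forming the inner product
\[
\langle \nabla(F\circ B),(x,1+y)\rangle = \sum_{i=1}^n x_i\,\partial_i(F\circ B) + (1+y)\,\partial_{n+1}(F\circ B)
\]
and collecting the coefficient of each $(\partial_j F)(B)$ and of $(\partial_{n+1}F)(B)$, the key simplification is the repeated use of $M = (1+y)^2 + |x|^2$, which collapses expressions such as $4\big(|x|^2+(1+y)^2\big)/M^2 = 4/M$. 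After this cancellation, the coefficient of $(\partial_j F)(B)$ reduces to $-2x_j/M = -\Phi\, x_j$ and the coefficient of $(\partial_{n+1}F)(B)$ reduces to $2(1+y)/M = \Phi(1+y)$, so the sum becomes exactly $\Phi\langle\nabla F(B),(-x,1+y)\rangle$.

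For the second identity, I would simply multiply the identity of Lemma \ref{lem:Identity} by $\Phi^2$ to obtain
\[
\Delta(F\circ B) = (\Delta F)(B)\,\Phi^2 + (n-1)\,\Phi^2\,\langle\nabla F(B),(-x,1+y)\rangle ,
\]
and then rewrite the last term using the first identity in the form $\Phi\,\langle\nabla F(B),(-x,1+y)\rangle = \langle\nabla(F\circ B),(x,1+y)\rangle$, which turns $\Phi^2\,\langle\nabla F(B),(-x,1+y)\rangle$ into $\Phi\,\langle\nabla(F\circ B),(x,1+y)\rangle$. This yields the claimed formula.

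There is no genuine obstacle here; the only mild care is needed in the first step, where one must keep track of several $1/M^2$ and $1/M^3$ terms and check that the cancellations go through using $M=(1+y)^2+|x|^2$ and $\Phi = 2/M$. Everything else is an immediate consequence of Lemma \ref{lem:Identity}.
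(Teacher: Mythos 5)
Your proposal is correct and follows essentially the same route as the paper: the first identity is obtained by contracting the explicit formulas \eqref{eq:Partial(i)F} and \eqref{eq:Partial(n+1)F} with $(x,1+y)$ and simplifying via $M=(1+y)^2+|x|^2$, and the second follows by multiplying the identity of Lemma \ref{lem:Identity} by $\Phi^2$ and substituting the first. No gaps.
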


\begin{proof}
In view of \eqref{eq:Partial(i)F} and \eqref{eq:Partial(n+1)F} we obtain
\begin{align*}
\langle\nabla (F \circ B) & , (x,1+y)\rangle\\
=& \sum_{i=1}^n \sum_{j=1}^n (\partial_j F)(B) \Big(\frac{2 \delta_{i,j}}M -\frac{4 x_i x_j }{M^2}\Big)x_i + \frac{4(1+y)}{M^2} (\partial_{n+1}F)(B) |x|^2\\
& -\frac{4(1+y)^2}{M^2} \sum_{j=1}^n (\partial_j F)(B) x_j + 2 (1+y)( \partial_{n+1}F)(B)\frac{(1+y)^2 - |x|^2}{M^2}\\
=& -\frac2M\sum_{j=1}^n( \partial_j F)(B) x_j +\frac 2M (\partial_{n+1} F)(B) (1+y) .
\end{align*}
From this and Lemma \ref{lem:Identity} we have the desired result.
\end{proof}

The main purpose of this section is to prove Proposition \ref{thmIDENTITY} below. Let us first consider the case $k=1$ in Proposition \ref{thmIDENTITY}. Let $F: \mathbb B^{n+1} \to \R$ be arbitrary, we define $f_1 : \R^n \to \R$, in terms of $F$, via the following rule
\begin{equation*}\label{eq:f_1FB}
f_1(X) = (F \circ B)(X) \Phi (X)^{\frac{n-1}2}.
\end{equation*}
The next result provides a relation between $\Delta f_1$ and $(\Delta F) (B)$.

\begin{proposition}\label{prop:RelationF1}
There holds
\begin{equation}\label{eq:relation1}
\Delta f_1 = (\Delta F)(B ) \Phi ^{\frac{n+3}2}.
\end{equation}
\end{proposition}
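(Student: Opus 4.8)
The plan is to compute $\Delta f_1$ directly using the product rule, exploiting the two computational facts already available: Lemma \ref{lem:NablaDelta}, which gives $\nabla \Phi^a$ and $\Delta \Phi^a$ explicitly, and Corollary \ref{cor:Identity}, which expresses $\Delta(F\circ B)$ in terms of $(\Delta F)(B)$ and the radial-type derivative $\langle \nabla(F\circ B),(x,1+y)\rangle$. Writing $f_1 = (F\circ B)\,\Phi^{(n-1)/2}$, the Leibniz rule gives
\[
\Delta f_1 = \Phi^{\frac{n-1}2}\,\Delta(F\circ B) + 2\,\langle \nabla(F\circ B), \nabla\Phi^{\frac{n-1}2}\rangle + (F\circ B)\,\Delta\Phi^{\frac{n-1}2}.
\]
The strategy is to substitute the three known expressions into the three terms and check that everything except the desired $(\Delta F)(B)\,\Phi^{(n+3)/2}$ cancels.

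First I would handle the middle term: by Lemma \ref{lem:NablaDelta} with $a=(n-1)/2$ we have $\nabla\Phi^{(n-1)/2} = -\tfrac{n-1}2\,\Phi^{(n+1)/2}(x,1+y)$, so the cross term becomes $-(n-1)\,\Phi^{(n+1)/2}\,\langle\nabla(F\circ B),(x,1+y)\rangle$. Next, for the last term, Lemma \ref{lem:NablaDelta} gives $\Delta\Phi^{(n-1)/2} = -\tfrac{n-1}2\bigl(n-1-(n-1)\bigr)\Phi^{(n+1)/2} = 0$, since $n-1-2a = 0$ exactly when $a=(n-1)/2$. This is the key simplification: the weight $(n-1)/2$ is precisely the one making $\Phi^{(n-1)/2}$ "harmonic" in this sense, so the last term vanishes identically. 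Finally, for the first term, Corollary \ref{cor:Identity} gives $\Delta(F\circ B) = (\Delta F)(B)\,\Phi^2 + (n-1)\,\langle\nabla(F\circ B),(x,1+y)\rangle\,\Phi$, so multiplying by $\Phi^{(n-1)/2}$ produces $(\Delta F)(B)\,\Phi^{(n+3)/2} + (n-1)\,\Phi^{(n+1)/2}\,\langle\nabla(F\circ B),(x,1+y)\rangle$.

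Adding the three contributions, the two occurrences of $(n-1)\,\Phi^{(n+1)/2}\,\langle\nabla(F\circ B),(x,1+y)\rangle$ appear with opposite signs and cancel, the last term is zero, and what remains is exactly $(\Delta F)(B)\,\Phi^{(n+3)/2}$, which is \eqref{eq:relation1}. I do not anticipate a genuine obstacle here; the only thing to be careful about is bookkeeping the exponents of $\Phi$ consistently (note $\Phi^{(n-1)/2}\cdot\Phi = \Phi^{(n+1)/2}$ and $\Phi^{(n-1)/2}\cdot\Phi^2 = \Phi^{(n+3)/2}$) and making sure the sign conventions in Lemma \ref{lem:NablaDelta} and Corollary \ref{cor:Identity} match up when the $(x,1+y)$ versus $(-x,1+y)$ vectors are paired against $\nabla(F\circ B)$ rather than $\nabla F(B)$ — but Corollary \ref{cor:Identity} is already phrased in terms of $\langle\nabla(F\circ B),(x,1+y)\rangle$, so this matches the cross term verbatim and no further translation is needed.
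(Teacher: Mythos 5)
Your proposal is correct and is essentially identical to the paper's proof: both expand $\Delta f_1$ by the Leibniz rule, substitute Corollary \ref{cor:Identity} for $\Delta(F\circ B)$ and Lemma \ref{lem:NablaDelta} for $\nabla\Phi^{\frac{n-1}2}$ and $\Delta\Phi^{\frac{n-1}2}$ (the latter vanishing since $n-1-2a=0$ for $a=\frac{n-1}2$), and observe that the two $(n-1)\Phi^{\frac{n+1}2}\langle\nabla(F\circ B),(x,1+y)\rangle$ terms cancel. No gaps.
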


\begin{proof}
This fact is a consequence of the preceding corollary. In fact, it follows from Corollary \ref{cor:Identity} that
\[\begin{split}
\Delta f_1 = & \Delta \big( F(B ) \big) \Phi ^{\frac{n-1}2}+2 \nabla \big( F(B ) \big) \nabla \Phi ^{\frac{n-1}2} + F(B(X)) \Delta \Phi ^{\frac{n-1}2} \\
= & (\Delta F)(B )\Phi ^{\frac{n+3}2} + (n-1) \langle \nabla (F (B ) ), (x, 1+y)\rangle \Phi ^{\frac{n+1}2}+2 \nabla \big( F(B ) \big) \nabla \Phi ^{\frac{n-1}2} \\
= & (\Delta F)(B)\Phi ^{\frac{n+3}2}
\end{split}\]
as claimed.
\end{proof}

To generalize \eqref{eq:relation1} for higher order derivatives, we first mimic the proof of Lemma 2.2 in \cite{Hang} to obtain another useful identity.

\begin{lemma}\label{lemConformalCovariantIdentity}
There holds
\[
\Delta (\Phi ^{-m-1} \Delta^m u ) = \Phi ^{-m} \Delta^{m+1} ( \Phi ^{-1} u )
\]
for any non-negative integer $m$.
\end{lemma}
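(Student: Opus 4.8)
The plan is to prove the identity $\Delta (\Phi^{-m-1} \Delta^m u) = \Phi^{-m} \Delta^{m+1}(\Phi^{-1} u)$ by induction on $m$, using the conformal transformation rule for the Laplacian that is implicit in Corollary \ref{cor:Identity}. First I would establish the base case $m=0$, which asserts $\Delta(\Phi^{-1} u) = \Phi^{-0} \Delta(\Phi^{-1} u)$; this is trivially true, so the real content starts at $m=1$, or rather the inductive step itself carries all the weight. The key computational engine is the observation that for any smooth function $w$ on $\bB^{n+1}$, writing $w = F \circ B$ one has from Corollary \ref{cor:Identity} (combined with $B \circ B = \mathrm{id}$ up to the involution structure, or more directly by the same computation applied to $\Phi^{-1}$ in place of the half-density weight $\Phi^{(n-1)/2}$) a relation of the shape $\Delta(\Phi^{-1} w) = \Phi \cdot (\text{something involving } \Delta w)$. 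More precisely, I expect the correct auxiliary identity to be $\Delta(\Phi^{-1} w) = \Phi^{-1}\big(\Delta w - (n-1)\langle \nabla w, (x,1+y)\rangle \Phi \cdot (\cdots)\big)$-type expression; what actually makes the telescoping work is that $\Phi$ is (a power of) the conformal factor of the map $B$, so conjugation by powers of $\Phi$ intertwines iterated Laplacians in the standard GJMS/Graham--Jenne--Mason--Sparling fashion.

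The cleanest route, following the cited Lemma 2.2 of \cite{Hang}, is the following. Set $v = \Phi^{-1} u$, so that $\Phi^{-m}\Delta^{m+1}(\Phi^{-1}u) = \Phi^{-m}\Delta^{m+1} v$. The claim becomes $\Delta(\Phi^{-m-1}\Delta^m(\Phi v)) = \Phi^{-m}\Delta^{m+1} v$, i.e.\ that the operator $v \mapsto \Phi^{-m-1}\Delta^m(\Phi v)$ has the reproducing property $\Delta \circ (\text{it at level } m) = (\text{it at level } m+1) \circ \Delta$ after suitable rescaling. I would prove by induction on $m$ the statement $\Phi^{-m-1}\Delta^m(\Phi v) = (\text{an explicit }m\text{-fold iterate})$; the inductive step requires computing $\Delta^{m+1}(\Phi v) = \Delta(\Delta^m(\Phi v)) = \Delta(\Phi^{m+1} \cdot \Phi^{-m-1}\Delta^m(\Phi v))$ and then applying the $m=1$ case of the identity (the Leibniz expansion of $\Delta(\Phi^{m+1} g)$ using Lemma \ref{lem:NablaDelta} for $\nabla \Phi^a$ and $\Delta \Phi^a$) to peel off one factor of $\Phi$. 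The algebra here is where one must be careful: the cross term $2\nabla \Phi^{a} \cdot \nabla g$ and the term $g \Delta \Phi^a$ must combine, via the specific value $a = -m-1$ and the formula $\Delta \Phi^a = -a(n-1-2a)\Phi^{a+1}$, to cancel against lower-order debris, exactly as in Proposition \ref{prop:RelationF1} where the analogous cancellation occurred with exponent $(n-1)/2$.

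Concretely, the inductive step I would write runs: assume $\Delta(\Phi^{-m}\Delta^{m-1}u) = \Phi^{-m+1}\Delta^m(\Phi^{-1}u)$ for all $u$. Apply this with $u$ replaced by $\Phi^{-1}u$ to get $\Delta(\Phi^{-m}\Delta^{m-1}(\Phi^{-1}u)) = \Phi^{-m+1}\Delta^m(\Phi^{-2}u)$ --- but this is not quite the shape needed, so instead I would feed the hypothesis into a direct computation of $\Delta(\Phi^{-m-1}\Delta^m u)$: write $\Delta^m u = \Delta(\Delta^{m-1}u)$ and use the $m=1$ version of the sought identity, namely $\Delta(\Phi^{-2}\Delta w) = \Phi^{-1}\Delta^2(\Phi^{-1}w)$, applied with $w = \Phi \cdot \Phi^{-1}\Delta^{m-1}u$ after absorbing the right power of $\Phi$; then the induction hypothesis rewrites the inner piece $\Phi^{-1}\Delta^{m-1}(\ldots)$ in the desired telescoped form. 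The main obstacle is precisely organizing this so the powers of $\Phi$ bookkeep correctly and the first-order gradient terms (which appear at every stage via Corollary \ref{cor:Identity}) cancel identically; I expect that once the $m=1$ identity $\Delta(\Phi^{-2}\Delta w) = \Phi^{-1}\Delta^2(\Phi^{-1}w)$ is verified by brute Leibniz expansion using Lemma \ref{lem:NablaDelta} and Corollary \ref{cor:Identity}, the general $m$ follows by a short formal induction with no further miracles needed. I would therefore devote the bulk of the written proof to the $m=1$ case and dispatch the induction in two or three lines.
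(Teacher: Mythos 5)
There is a genuine gap: the inductive step you plan to ``dispatch in two or three lines'' does not close. Concretely, applying your $m=1$ identity $\Delta(\Phi^{-2}\Delta w)=\Phi^{-1}\Delta^{2}(\Phi^{-1}w)$ with $w=\Delta^{m-1}u$ produces $\Delta(\Phi^{-2}\Delta^{m}u)$, not the quantity $\Delta(\Phi^{-m-1}\Delta^{m}u)$ you need: the extra factor $\Phi^{-(m-1)}$ sits inside the outer Laplacian and cannot be pulled out, and after the substitution the inner expression becomes $\Delta^{2}(\Phi^{-1}\Delta^{m-1}u)$, which is again not of the shape $\Delta(\Phi^{-m}\Delta^{m-1}u)$ covered by the induction hypothesis (wrong power of $\Phi$, two outer Laplacians instead of one). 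If you try to repair this by first using the hypothesis to realize $\Phi^{-m-1}\Delta^{m}u$ as $\Phi^{-2}\Delta w$ for a suitable $w$, you are led to a ``two-step'' identity of the form $\Delta^{2}(\Phi^{-m-2}\Delta^{m}h)=\Phi^{-m}\Delta^{m+2}(\Phi^{-2}h)$, whose only available derivation uses the very case $P_{m+1}$ you are trying to prove. In fact the failure is structural, not presentational: writing $M$ for multiplication by $\Phi^{-1}$, the relation $\Delta M^{2}\Delta=M\Delta^{2}M$ (your $m=1$ case) does \emph{not} formally imply $\Delta M^{3}\Delta^{2}=M^{2}\Delta^{3}M$ (the case $m=2$); a bidegree count shows the difference of these two words is not in the two-sided ideal generated by the first relation, so no amount of formal substitution of the $m=1$ identity into itself can produce the general case. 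One must re-enter the specific commutator structure of $\Delta$ with multiplication by $\Phi^{-1}$ at every order.

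That commutator formula is precisely the missing ingredient, and it is where the paper puts the work: one proves by induction on $k$ the explicit identity $\Delta^{k}(\Phi^{-1}u)=k(2k+n-1)\,\Delta^{k-1}u+2k\,\langle (x,1+y),\nabla\Delta^{k-1}u\rangle+\Phi^{-1}\Delta^{k}u$, and then a single Leibniz expansion of $\Delta(\Phi^{-m-1}\Delta^{m}u)$, using the formulas for $\nabla\Phi^{a}$ and $\Delta\Phi^{a}$ from Lemma \ref{lem:NablaDelta}, matches it term by term with $\Phi^{-m}\Delta^{m+1}(\Phi^{-1}u)$. Your $m=1$ Leibniz computation only yields the $k=1$ instance of this commutator, which is not enough; to complete your argument you would have to prove the full $k$-dependent formula (or an equivalent control of $[\Delta^{k},M]$), at which point you have essentially reproduced the paper's proof rather than shortened it.
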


\begin{proof}
Given any non-negative number $a$, it is easy to verify that
\[
\left\{
\begin{split}
\nabla \Big( \frac{|x|^2 + (1+y)^2}2 \Big)^a & =a\Big( \frac{|x|^2 + (1+y)^2}2 \Big)^{a-1} (x, 1+y),\\
 \Delta \Big( \frac{|x|^2 + (1+y)^2}2 \Big)^a &= a (2a + n -1) \Big( \frac{|x|^2 + (1+y)^2}2 \Big)^{a-1}.
\end{split}
\right.
\]
Clearly, the case $m=0$ is trivial. To consider the case $m>0$, we first observe
\[
\Delta \Big( \frac{|x|^2 + (1+y)^2}2 u \Big) = (n+1) u + 2 (x, 1+y) \nabla u + \frac{|x|^2 + (1+y)^2}2 \Delta u.
\]
By induction on $k$ we get
\[
\Delta^k \Big( \frac{|x|^2 + (1+y)^2}2 u \Big) = a_k \Delta^{k-1} u + b_k (x, 1+y) \nabla \Delta^{k-1} u + \frac{|x|^2 + (1+y)^2}2 \Delta^k u
\]
with $a_k=k(2k+n-1)$ and $b_k = 2k$. Indeed,
\[
\begin{split}
\Delta^{k+1} \Big( &\frac{|x|^2 + (1+y)^2}2 u \Big) \\
=& \Delta \Big( a_k \Delta^{k-1} u + b_k (x, 1+y) \nabla \Delta^{k-1} u + \frac{|x|^2 + (1+y)^2}2 \Delta^k u \Big) \\
=& a_k \Delta^{k} u + 2 b_k\Delta^{k} u + b_k (x, 1+y) \nabla \Delta^{k} u \\
& + (n+1) \Delta^k u + 2 (x, 1+y) \nabla \Delta^k u + \frac{|x|^2 + (1+y)^2}2 \Delta^{k+1} u \\
= & \big( a_k + 2b_k + n + 1 \big) \Delta^{k} u + (b_k+2) (x, 1+y) \nabla \Delta^k u + \frac{|x|^2 + (1+y)^2}2 \Delta^{k+1} u\\
= & a_{k+1} \Delta^{k} u + b_{k+1} (x, 1+y) \nabla \Delta^k u + \frac{|x|^2 + (1+y)^2}2 \Delta^{k+1} u.
\end{split}
\]
Using this formula, we deduce that
\[
\begin{split}
\Delta \Big( \Big( \frac{|x|^2 + (1+y)^2}2 \Big)^{m+1} \Delta^m u \Big) =& (m+1) (2m + n + 1) \Big( \frac{|x|^2 + (1+y)^2}2 \Big)^{m} \Delta^m u \\
&+2m\Big( \frac{|x|^2 + (1+y)^2}2 \Big)^{m-1} (x, 1+y) \nabla \Delta^k u \\
& + \Big( \frac{|x|^2 + (1+y)^2}2 \Big)^{m+1} \Delta^{m+1} u\\
= &\Big( \frac{|x|^2 + (1+y)^2}2 \Big)^m \Delta^{m+1} \Big( \frac{|x|^2 + (1+y)^2}2 u \Big).
\end{split}
\]
Thus, for any non-negative integer $m$, we have just shown that
\[
\Delta (\Phi ^{-m-1} \Delta^m u ) = \Phi ^{-m} \Delta^{m+1} ( \Phi ^{-1} u )
\]
as claimed.
\end{proof}

We are now in position to generalize Proposition \ref{prop:RelationF1}. We prove the following theorem.

\begin{proposition}\label{thmIDENTITY}
For any integer $1\leqslant k < n/2$, define 
\[
f_k = F \circ B \, \Phi ^{\frac{n+1-2k}2}.
\]
Then we have the following identity
\begin{equation}\label{eq:relationgeneral}
\Delta^k f_k =  (\Delta^k F)\circ B\, \Phi^{\frac{n+1+2k}2} .
\end{equation}
\end{proposition}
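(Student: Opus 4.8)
The plan is to prove \eqref{eq:relationgeneral} by induction on $k$, using Lemma \ref{lemConformalCovariantIdentity} as the inductive engine and Proposition \ref{prop:RelationF1} as the base case $k=1$. Recall that $\Phi^{-1} = M/2 = (|x|^2 + (1+y)^2)/2$, so that Lemma \ref{lemConformalCovariantIdentity} reads $\Delta(\Phi^{-m-1}\Delta^m u) = \Phi^{-m}\Delta^{m+1}(\Phi^{-1}u)$; this is precisely the intertwining relation needed to peel off one Laplacian at a time while absorbing the powers of $\Phi$.

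First I would set up the induction hypothesis: assume that for some $1 \leqslant k < n/2$ one has $\Delta^k f_k = (\Delta^k F)\circ B \,\Phi^{\frac{n+1+2k}{2}}$ for every smooth $F$, where $f_k = (F\circ B)\,\Phi^{\frac{n+1-2k}{2}}$. To pass to $k+1$, I would write $f_{k+1} = (F\circ B)\,\Phi^{\frac{n-1-2k}{2}} = \Phi^{-1} f_k$, so that $\Delta^{k+1} f_{k+1} = \Delta^{k+1}(\Phi^{-1} f_k)$. Now apply Lemma \ref{lemConformalCovariantIdentity} with $u = f_k$ and $m = k$: this gives $\Phi^{-k}\Delta^{k+1}(\Phi^{-1}f_k) = \Delta(\Phi^{-k-1}\Delta^k f_k)$. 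Substituting the induction hypothesis $\Delta^k f_k = (\Delta^k F)\circ B\,\Phi^{\frac{n+1+2k}{2}}$, the right-hand side becomes $\Delta\big(\Phi^{-k-1}\,(\Delta^k F)\circ B\,\Phi^{\frac{n+1+2k}{2}}\big) = \Delta\big((\Delta^k F)\circ B\,\Phi^{\frac{n-1-2k}{2}}\big)$. But $\Delta^k F$ is itself a smooth function on $\mathbb B^{n+1}$, and $\Phi^{\frac{n-1-2k}{2}} = \Phi^{\frac{n+1-2(k+1)}{2}}$, so this last expression is exactly $\Delta$ applied to the ``$f_1$-type'' object built from $\Delta^k F$ in place of $F$. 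By Proposition \ref{prop:RelationF1} applied to the function $\Delta^k F$, this equals $(\Delta(\Delta^k F))\circ B\,\Phi^{\frac{n+3}{2}} = (\Delta^{k+1}F)\circ B\,\Phi^{\frac{n+3}{2}}$.

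Combining, we obtain $\Phi^{-k}\Delta^{k+1}f_{k+1} = (\Delta^{k+1}F)\circ B\,\Phi^{\frac{n+3}{2}}$, hence $\Delta^{k+1}f_{k+1} = (\Delta^{k+1}F)\circ B\,\Phi^{\frac{n+3}{2}+k} = (\Delta^{k+1}F)\circ B\,\Phi^{\frac{n+1+2(k+1)}{2}}$, which is \eqref{eq:relationgeneral} at level $k+1$. This closes the induction, noting that the constraint $k+1 < n/2$ is exactly what is needed to keep all the exponents of $\Phi$ in the range where the computations in Lemmas \ref{lem:NablaDelta} and \ref{lemConformalCovariantIdentity} are valid (and to ensure $\Delta^k F$ makes sense as the kind of function to which Proposition \ref{prop:RelationF1} applies).

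The main subtlety — rather than a genuine obstacle — is bookkeeping the exponents of $\Phi$ correctly at each substitution, since three different powers ($\frac{n+1-2k}{2}$ for $f_k$, $\frac{n+1+2k}{2}$ for $\Delta^k f_k$, and $-k-1$, $-k$, $-1$ from the intertwining lemma) all appear and must telescope precisely. One must also be slightly careful that Proposition \ref{prop:RelationF1} is stated for a generic smooth $F$, so its application to $\Delta^k F$ is legitimate; alternatively, one could avoid invoking Proposition \ref{prop:RelationF1} at the inductive step and instead directly expand $\Delta\big((\Delta^k F)\circ B\,\Phi^{\frac{n-1-2k}{2}}\big)$ using Corollary \ref{cor:Identity} and Lemma \ref{lem:NablaDelta}, but reusing Proposition \ref{prop:RelationF1} is cleaner. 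No new analytic input beyond the already-established conformal identities is required.
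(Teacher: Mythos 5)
Your induction is correct in substance, and it is organized differently from the paper's. The paper's inductive step goes through Corollary \ref{cor:Identity}: it writes $(\Delta^{k+1}F)\circ B = \Delta\big((\Delta^k F)\circ B\big)\,\Phi^{-2}-(n-1)\big\langle\nabla\big((\Delta^k F)\circ B\big),(x,1+y)\big\rangle\,\Phi^{-1}$, substitutes the inductive hypothesis into both terms, and expands with Lemma \ref{lem:NablaDelta} and Lemma \ref{lemConformalCovariantIdentity} until the unwanted terms cancel. Your route --- factoring $f_{k+1}=\Phi^{-1}f_k$, applying Lemma \ref{lemConformalCovariantIdentity} with $u=f_k$, $m=k$, and then invoking Proposition \ref{prop:RelationF1} for the smooth function $\Delta^k F$ --- uses the same ingredients but eliminates the $I$/$II$ bookkeeping entirely; it is shorter and makes transparent that the only analytic content is the case $k=1$ plus the intertwining lemma. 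Since Proposition \ref{prop:RelationF1} was proved for an arbitrary smooth $F$, applying it to $\Delta^k F$ is indeed legitimate.

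One arithmetic slip should be fixed. After substituting the inductive hypothesis you write $\Phi^{-k-1}\,(\Delta^k F)\circ B\,\Phi^{\frac{n+1+2k}2} = (\Delta^k F)\circ B\,\Phi^{\frac{n-1-2k}2}$, but the exponent is $\frac{n+1+2k}2-(k+1)=\frac{n-1}2$, independent of $k$. This is not a harmless typo as written: Proposition \ref{prop:RelationF1} applies precisely to objects of the form $(G\circ B)\,\Phi^{\frac{n-1}2}$, so with exponent $\frac{n-1-2k}2$ (which is the $f_{k+1}$-type object built from $\Delta^k F$, not the $f_1$-type one) the appeal to it would be unjustified and the step would fail. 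With the correct exponent $\frac{n-1}2$ the step is exactly an application of Proposition \ref{prop:RelationF1} to $\Delta^k F$, and your subsequent lines --- including the final power count $\frac{n+3}2+k=\frac{n+1+2(k+1)}2$ --- are correct, so the argument closes. A minor remark: the restriction $k<n/2$ plays no role in this algebra (Lemma \ref{lem:NablaDelta} and Lemma \ref{lemConformalCovariantIdentity} hold for any exponent and any non-negative integer $m$); it matters only for the later applications of the identity.
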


\begin{proof}
We prove \eqref{eq:relationgeneral} by induction. Thanks to \eqref{eq:relation1}, the statement holds for $k=1$. Assume by induction that \eqref{eq:relationgeneral} holds up to some $k<\lfloor n/2 \rfloor-1$, that is
\begin{equation}\label{eq:RelationInductionK}
(\Delta^k F) \circ B  = \Phi ^{-\frac{n+1+2k}2} \Delta^k \big( F \circ B \, \Phi ^{\frac{n+1-2k}2} \big). 
\end{equation}
To compute $\Delta^{k+1} f_{k+1}$, it suffices to compute $(\Delta^{k+1} F) \circ B$. Indeed, by Corollary \ref{cor:Identity}, we have
\begin{align*}
(\Delta^{k+1} F)\circ B = & (\Delta (\Delta^k F))\circ B \\
= &\Delta ((\Delta^k F)\circ B) \Phi^{-2} -(n-1) \langle \nabla ((\Delta^k F)\circ B),(x,1+y)\rangle \Phi^{-1} \\
=& I \Phi^{-2} - (n-1) II \Phi^{-1}.
\end{align*}
To simplify notation, we denote 
\[
u = F\circ B\, \Phi^{\frac{n+1-2k}2}.
\] 
Then the induction assumption \eqref{eq:RelationInductionK} becomes
\begin{equation}\label{eq:RelationInductionKNewForm}
(\Delta^k F)\circ B = \Phi^{-\frac{n+1+2k}2} \Delta^k u.
\end{equation}
We now compute $I$. Clearly, by \eqref{eq:RelationInductionKNewForm}, we have
\begin{align*}
I = & \Delta(\Phi^{-\frac{n-1}2} \Phi^{-k-1} \Delta^k u) \\
=& \Delta (\Phi^{-\frac{n-1}2})\Phi^{-k-1} \Delta^k u + \Phi^{-\frac{n-1}2} \Delta(\Phi^{-k-1} \Delta^k u) \\
& + 2 \langle \nabla \Phi^{-\frac{n-1}2}, \nabla(\Phi^{-k-1} \Delta^k u)\rangle \\
=& \Delta (\Phi^{-\frac{n-1}2})\Phi^{-k-1} \Delta^k u + \Phi^{-\frac{n-1}2} \Delta(\Phi^{-k-1} \Delta^k u)  \\
& + 2 \langle \nabla \Phi^{-\frac{n-1}2}, \nabla \Phi^{-k-1}\rangle \Delta^k u +2  \Phi^{-k-1} \langle  \nabla \Phi^{-\frac{n-1}2}, \nabla \Delta^k u\rangle.
\end{align*}
Using Lemma \ref{lem:NablaDelta}, we can easily check that
\[
\nabla \Phi^{-\frac{n-1}2} = \frac{n-1}2 \Phi^{-\frac{n-3}2}(x,1+y),\quad 
\nabla \Phi^{-k-1} = (k+1) \Phi^{-k}(x,1+y),
\]
and
\[
 \Delta (\Phi^{-\frac{n-1}2}) = (n -1)^2 \Phi^{-\frac{n-3}2}.
\]
Therefore, these identities and Lemma \ref{lemConformalCovariantIdentity} yield
\begin{align*}
I  = & (n-1)^2 \Phi^{-\frac{n-1+2k}2} \Delta^k u + \Phi^{-\frac{n-1+2k}2} \Delta^{k+1}(\Phi^{-1} u)\\
& + 2(n-1)(k+1) \Phi^{-\frac{n-1+2k}2} \Delta^k u + (n-1) \Phi^{-\frac{n-1+2k}2} \langle (x, 1+y), \nabla \Delta^k u \rangle \\
= & \Phi^{-\frac{n-1+2k}2} \Delta^{k+1}(\Phi^{-1} u) + (n-1)(n+1+2k)\Phi^{-\frac{n-1+2k}2} \Delta^k u\\
&  + (n-1) \Phi^{-\frac{n-1+2k}2} \langle \nabla \Delta^k u, (x, 1+y)\rangle.
\end{align*}
On the other hand, by \eqref{eq:RelationInductionKNewForm} and Lemma \ref{lem:NablaDelta}, we also have
\begin{align*}
II = & \langle \nabla ((\Delta^k F)\circ B),(x,1+y)\rangle\\
 =& \langle \nabla(\Phi^{-\frac{n+1+2k}2} \Delta^k u), (x,1+y)\rangle \\
= &(n+1+2k) \Phi^{-\frac{n+1+2k}2} \Delta^k u + \Phi^{-\frac{n+1+2k}2} \langle \nabla \Delta^k u, (x,1+y)\rangle.
\end{align*}
Consequently, we get
\begin{align*}
(\Delta^{k+1} F)\circ B =&  I \Phi^{-2} - (n-1) II \Phi^{-1} \\
=& \Phi^{-\frac{n+3+2k}2} \Delta^{k+1}(\Phi^{-1} u) \\
= & \Phi^{-\frac{n+1+2(k+1)}2} \Delta^{k+1} \big( F\circ B \Phi^{\frac{n+1 -2(k+1)}2} u \big)
\end{align*}
as wanted, which, by induction, completes the proof.
\end{proof}


\section{Sobolev trace inequality of order two}
\label{sec-Order2}

The main purpose of this section is to provide a new proof of the Sobolev trace inequality of order two on spheres. As we shall soon see later, our argument depends on the sharp Sobolev trace inequality \eqref{eq:SobolevTraceSpaceOrder2} on $\R^{n+1}_+$, that is
\[
\frac{\Gamma (\frac{n+1}2)}{\Gamma (\frac{n-1}2)} \omega_n^{1/n} \Big( \int_{\R^n} |U(x,0)|^{\frac{2n}{n-1}} dx \Big)^{\frac{n-1}n} \leqslant \int_{\R^{n+1}_+} |\nabla U|^2 dxdy.
\]

\subsection{Sharp Sobolev trace inequality of order two on $\bB^{n+1}$: Proof of Theorem \ref{thmTraceOrder2}}

This subsection is devoted to a proof of Theorem \ref{thmTraceOrder2}. The proof consisting of four steps is divided into two parts. In the first three steps, we prove \eqref{eq:TraceOrder2} for any harmonic extension. Then in the last part, we prove \eqref{eq:TraceOrder2} for any smooth extension. 

\medskip\noindent\textbf{Step 1}. Given $f \in C^\infty (\mathbb S^n)$ and suppose that $u$ is a harmonic extension of $f$ to $\bB^{n+1}$. Then, in terms of $u$, we define the function $U$ on $\R^{n+1}_+$ by
\[
U (x,y) = (u \circ B )(x,y) \Big(\frac2{(1+y)^2 +|x|^2}\Big)^{\frac{n-1}2}.
\]
Thanks to \eqref{eq:relation1} and the harmonicity of $u$, we have the relation
\[
(\Delta U)(x,y) = \Delta (u \circ B )(x,y) \Big(\frac2{(1+y)^2 +|x|^2}\Big)^{\frac{n+1}2} =0
\]
on $\R^{n+1}_+$. Hence $U$ is a harmonic extension of $f$ to the upper halfspace $\R^{n+1}_+$ . Thus, we can apply the Sobolev trace inequality \eqref{eq:SobolevTraceSpaceOrder2} on $\R^{n+1}_+$ for $U$. The idea is to transform this trace inequality on $\R_+^{n+1}$ to an equivalent trace inequality on $\bB^{n+1}$. To this purpose, we have to express $\int_{\R^n} |U(x,0)|^{\frac{2n}{n-1}} dx$ and $\int_{\R^{n+1}_+} |\nabla U|^2 dxdy$ in terms of $u$ and this is the content of the next two steps.

\medskip\noindent\textbf{Step 2}. First we calculate $\int_{\R^n} |U(x,0)|^{\frac{2n}{n-1}} dx$. Clearly,
\begin{equation}\label{eq:2LHS}
|U(x,0)|^{\frac{2n}{n-1}} = |u(\mathcal S(x))|^{\frac{2n}{n-1}} J_{\mathcal S}(x) =|f(\mathcal S(x))|^{\frac{2n}{n-1}} J_{\mathcal S}(x).
\end{equation}
From this we deduce that
\[
\int_{\R^n} |U(x,0)|^{\frac{2n}{n-1}} dx = \int_{\bS^n} |f|^{\frac{2n}{n-1}} d\omega .
\]

\medskip\noindent\textbf{Step 3}. Now we calculate $\int_{\R^{n+1}_+} |\nabla U|^2 dxdy$. Without writing the variable $X$, it follows from Corollary \ref{cor:Identity} that
\[
\nabla U = \Big[DB^t \cdot (\nabla u)(B ) - \frac {n-1}2 u(B ) (x,1+y) \Phi \Big] \Phi ^{\frac{n-1}2},
\]
where $DB$ is the Jacobian matrix of $B$ given in \eqref{eq:JacobianMatrixOfB}, that is
\[
DB(x,y) =
\begin{pmatrix}
\Phi I_n - \Phi^2 x \otimes x & - \Phi^2 x(1+y) \\
\Phi^2 x^t(1+y) &- \Phi + \Phi^2 (1+y)^2
\end{pmatrix}.
\]
Thanks to \eqref{eq:DBDBt}, we know that
\[
\langle DB^t \cdot (\nabla u)(B ), DB^t \cdot (\nabla u)(B ) \rangle = (\nabla u)(B )^t DB \cdot DB^t \cdot (\nabla u)(B ) = |\nabla u(B )|^2.
\] 
Furthermore, in view of Corollary \ref{cor:Identity}, we easily get
\[\begin{aligned}
\langle DB^t \cdot (\nabla u)(B ), (x,1+y) \rangle =& \langle \nabla (u \circ B ) , (x,1+y) \rangle \\
= &- \Phi \langle \nabla u(B ), (x,-1-y) \rangle \\
= &- \langle \nabla u(B ), B - e_{n+1} \rangle
\end{aligned}\]
where $e_{n+1} =(0,\ldots,0,1)$. Here we have just used the elementary fact
\begin{equation}\label{eqPhi-B-e}
\Phi (x,-1-y) = B (x, y) - e_{n+1} .
\end{equation}
From these facts, we arrive at
\[
|\nabla U |^2 =
\left(
\begin{split}
& |\nabla u(B )|^2 \Phi ^2 + (n-1) u(B) \langle \nabla u (B ) , B -e_{n+1} \rangle \Phi \\
&+\Big(\frac{n-1}2 \Big)^2 u(B )^2 | (x,1+y)|^2 \Phi ^2 
\end{split}
\right)\Phi ^{n-1}.
\]
Notice by \eqref{eqPhi-B-e} that 
\[
|B(x,y) -e_{n+1}|^2 = 2 \Phi .
\] 
Therefore, we can rewrite $|\nabla U |^2$ as follows
\begin{equation}\label{eqOrder2|NablaU|^2}
|\nabla U |^2 =
\Big(
|\nabla u(B )|^2 + (n-1) \frac{\langle \nabla (u (B )^2), B -e_{n+1} \rangle}{|B -e_{n+1}|^2} +\frac{(n-1)^2 u(B )^2}{|B -e_{n+1}|^2}
\Big)\Phi ^{n+1}.
\end{equation}
Keep in mind that the Jacobian of $B$ is $\Phi ^{n+1}$. Hence from \eqref{eqOrder2|NablaU|^2} a simple change of variables leads us to
\begin{align*}
\int_{\R^{n+1}_+} |\nabla U|^2 dx dy =& \int_{\bB^{n+1}} |\nabla u|^2 dz + (n-1) \int_{\bB^{n+1}} \frac{\langle \nabla (u^2) , z -e_{n+1} \rangle }{|z -e_{n+1}|^2} dz\\
&+ (n-1)^2 \int_{\bB^{n+1}} \frac{u^2}{|z -e_{n+1}|^2} dz.
\end{align*}
Keep in mind that
\[
\nabla \cdot \frac{ z -e_{n+1} }{|z -e_{n+1}|^2} = \frac{n-1}{|z -e_{n+1}|^2}
\]
in $\R^{n+1}$. Hence, integrating by parts yields
\[
 \int_{\bB^{n+1}} \frac{\langle \nabla (u^2), z -e_{n+1} \rangle}{|z -e_{n+1}|^2} dz = - (n-1)\int_{\bB^{n+1}} \frac{u^2}{|z -e_{n+1}|^2} dz + \int_{\bS^n} u^2 \frac{ \langle \omega -e_{n+1}, \omega \rangle}{|\omega -e_{n+1}|^2} d\omega ,
\]
where $\omega = x/|x|$. From this we obtain
\[
\int_{\R^{n+1}_+} |\nabla U|^2 dx dy = \int_{\bB^{n+1}} |\nabla u|^2 dz + (n-1)\int_{\bS^n} u^2 \frac{ \langle \omega -e_{n+1}, \omega \rangle }{|\omega -e_{n+1}|^2} d\omega .
\]
However, it is easy to see that
\[
 \langle \omega -e_{n+1}, \omega \rangle= 1 - \langle \omega, e_{n+1} \rangle = \frac12 |\omega -e_{n+1}|^2.
\] 
Thus, we have just shown that
\begin{equation}\label{eq:2RHS}
\int_{\R^{n+1}_+} |\nabla U|^2 dx dy =\int_{\bB^{n+1}} |\nabla u|^2 dz + \frac{n-1}2 \int_{\bS^n} |f|^2 d\omega .
\end{equation}
Combining \eqref{eq:2LHS}, \eqref{eq:2RHS}, and the sharp Sobolev trace inequality \eqref{eq:SobolevTraceSpaceOrder2} for $U$ gives
\[
\begin{aligned}
\frac{\Gamma(\frac{n+1}2)}{\Gamma(\frac{n-1}2)}  \omega_n^{1/n}\lt(\int_{\bS^n} |f|^{\frac{2n}{n-1}} d\omega \rt)^{\frac{n-1}n} \leqslant \int_{\bB^{n+1}} |\nabla u|^2 dz + \frac{n-1}2 \int_{\bS^n} |f|^2 d\omega
\end{aligned}
\]
provided $u$ is a harmonic extension of $f$ to $\mathbb B^n$. This completes Step 3.

\medskip\noindent\textbf{Step 4}. In this step, we prove \eqref{eq:TraceOrder2}. Indeed, given $f\in C^\infty(\bS^n)$ it is well-known that the minimizing problem
\begin{equation}\label{eq:varproblem2}
\inf_{w } \Big \{ \int_{\bB^{n+1}} |\nabla w|^2 dx : w \big|_{\mathbb S^n} = f \Big\}
\end{equation}
is attained by some harmonic extension $u$ of $f$ in $\bB^{n+1}$. Therefore, we can repeat from Step 1 to Step 3 to get the following estimate
\begin{equation}\label{eq:TraceOrder2p}
\begin{aligned}
\frac{\Gamma(\frac{n+1}2)}{\Gamma(\frac{n-1}2)}  \omega_n^{1/n}\lt(\int_{\bS^n} |f|^{\frac{2n}{n-1}} d\omega \rt)^{\frac{n-1}n} \leqslant \int_{\bB^{n+1}} |\nabla u|^2 dz + \frac{n-1}2 \int_{\bS^n} |f|^2 d\omega.
\end{aligned}
\end{equation}
Since $u$ is a minimizer of \eqref{eq:varproblem2}, any smooth extension $v$ of $f$ to $\mathbb B^n$ enjoys the estimate
\[
\int_{\bB^{n+1}} |\nabla u|^2 dx \leqslant \int_{\bB^{n+1}} |\nabla v|^2 dx.
\]
The desired inequality follows from the preceding estimate and \eqref{eq:TraceOrder2p}. The assertion for which equality in \eqref{eq:TraceOrder2} is attained is well-known.


\subsection{A weighted Beckner inequality of order two on $\bB^{n+1}$}
\label{subsec-2Beckner}

This subsection is devoted to a weighted Beckner inequality; see Theorem \ref{thm2BecknerTypeWeight} below. Let $f \in C^\infty(\bS^n)$ and $b \in (0,1)$. Let also $u$ be a smooth extension of $f$ in $\bB^{n+1}$ such that
\begin{equation}\label{eq:2BecknerRequire}
\Delta u(z) -\frac{2b }{1-|z|^2} \langle \nabla u(z), z \rangle= 0
\end{equation}
for $z \in \bB^{n+1}$. Suppose that the function $f$ has the following spherical harmonic decomposition
\[
f (\omega)= \sum_{k=0}^\infty Y_k(\omega ),
\]
where $\omega = x/|x|$ and $Y_k$ is a spherical harmonic of order $k \geqslant 0$. From this we decompose $u$ to get 
\[
u (z) = \sum_{k=0}^\infty f_k(r) Y_k(\omega ).
\] 
Recall that $\Delta = \Delta_r +(1/r^2) \widetilde \Delta$ with $\Delta_r = \partial^2_r + (n/r) \partial_ r$ in $\R^{n+1}$ and $\widetilde \Delta Y_k = -c_k Y_k$ with $c_k = k(n+k-1)$. For simplicity, let us denote by $L_k$ the following operator
\[
L_k : f(r) \mapsto f''(r) + \frac nr f'(r) - \frac {c_k}{r^2}.
\]
Now, on one hand,  we know that
\[\begin{aligned}
\Delta \big(f_k(r) Y_k(\omega ) \big) =&  \Delta_r\big(f_k(r) Y_k(\omega ) \big) +(1/r^2)\widetilde \Delta \big(f_k(r) Y_k(\omega ) \big) \\
=& \Big( L_k f_k (r) - \frac {2br}{1-r^2} f'_k \Big) Y_k (\omega).
\end{aligned}\]
On the other hand, for each $1 \leqslant i \leqslant n+1$, there holds
\[
\partial_i \big(f_k(r) Y_k(\omega ) \big) = f_k' (r) \partial_i (r) Y_k (\omega) + f_k (r) (\partial^j Y_k) (\omega)\partial_i (\frac {x_j}{r})
\]
leading to
\[
\langle f_k(r) Y_k(\omega ) , z \rangle = r f_k' (r) Y_k(\omega) + f_k(r) (\partial^j Y_k) (\omega) \Big( \frac{\delta_j^i}r x_i - \frac{x_i^2 x_j}{r^3} \Big)= r f_k' (r) Y_k(\omega).
\]
(Note that the Einstein convention was used in the previous equation.) Hence, it follows from \eqref{eq:2BecknerRequire} that the coefficients $f_k$ satisfy
\begin{equation}\label{eq2f_k}
f_k''(r) + \Big(\frac{n}r -\frac{2b r}{1-r^2}\Big) f_k'(r) - \frac{c_k}{r^2} f_k(r) = 0
\end{equation}
for any $r\in [0,1)$ and definitely $f_k(1) =1$. Recall that in the preceding decomposition, we know that $c_k = k(n+k-1)$ for all $k\geqslant 0$. Hence $f_0 \equiv 1$. Given $b \in (0,1)$, our aim is to understand
\[
\lim_{r\nearrow 1} \Big(\frac{1-r^2}2\Big)^b f_k'(r)
\]
for $k\geqslant 1$. In the following result, we describe this limit.

\begin{proposition}\label{limits}
For $k\geqslant 1$ and $b \in (-1,1)$, let $\alpha_k$ and $\beta_k$ be solutions of
\[
\alpha_k + \beta_k = \frac{n+2k-1+2b}2,\quad \alpha_k \beta_k = \frac{bk}2.
\]
Define
\begin{equation}\label{eq:Abk}
A(b,k) = 2^{-b} \frac{\Gamma(b+1)}{\Gamma(1-b)} \frac{\Gamma(\beta_k+1-b)\Gamma(\alpha_k+1-b)}{\Gamma(\alpha_k+1) \Gamma(\beta_k+1)} k
\end{equation}
if $k\geqslant 1$ and $A(b,0) =0$. Then we have 
\begin{equation*}\label{eq:limitsbien}
\lim_{r\nearrow 1} \Big(\frac{1-r^2}2\Big)^b f_k'(r) = A(b,k).
\end{equation*}
\end{proposition}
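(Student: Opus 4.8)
The ODE \eqref{eq2f_k} for $f_k$ is a second-order linear ODE with regular singular points at $r=0$ and $r=1$ (and at $r=-1$). The standard device is to change variables to $t = r^2$ (or $t = 1-r^2$), which turns \eqref{eq2f_k} into a Gaussian hypergeometric equation; the exponents $\alpha_k, \beta_k$ introduced in the statement are exactly the characteristic exponents at the finite singularities, chosen so that after factoring out the appropriate power of $t$ the residual equation is hypergeometric. First I would carry out this reduction explicitly: writing $f_k(r) = r^k g_k(r^2)$ (the $r^k$ factor comes from the exponent at $r=0$, forced by the $c_k/r^2$ term and the requirement that $f_k$ be smooth at the origin), substitute into \eqref{eq2f_k}, and identify the resulting equation for $g_k$ as ${}_2F_1$ with parameters expressible through $n,k,b$. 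One checks that the two local exponents at $r=1$ are $0$ and $-b$ (this is where $\alpha_k+\beta_k$ and $\alpha_k\beta_k$ enter: they package the hypergeometric parameters), so near $r=1$ one has $g_k(r^2) = c_1\, \phi_1(r) + c_2\, (1-r^2)^{-b}\,\phi_2(r)$ with $\phi_1,\phi_2$ analytic and $\phi_i(1)=1$.

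**Determining the constants.** The solution we want is the one regular at $r=0$ (i.e. proportional to $r^k g_k$ with $g_k$ analytic at $0$ and $g_k(0)=1$ to match $f_k(1)=1$ later — actually the normalization $f_k(1)=1$ fixes the overall scale). The connection problem — expressing the solution that is the distinguished (analytic) solution at $r=0$ in terms of the basis $\{\phi_1, (1-r^2)^{-b}\phi_2\}$ adapted to $r=1$ — is solved by the classical Gauss connection formula for ${}_2F_1$ relating ${}_2F_1(a,b;c;z)$ at $z=0$ to its behavior at $z=1$, whose coefficients are ratios of Gamma functions. This is precisely where the explicit Gamma-factor expression $A(b,k)$ in \eqref{eq:Abk} comes from. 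Then I would compute $\lim_{r\nearrow 1}\big((1-r^2)/2\big)^b f_k'(r)$: the analytic piece $\phi_1$ contributes nothing to this limit (its derivative stays bounded and the prefactor $(1-r^2)^b\to 0$), while the singular piece $(1-r^2)^{-b}\phi_2$, upon differentiation, produces a term behaving like $(1-r^2)^{-b-1}$ times constants, and multiplying by $\big((1-r^2)/2\big)^b$ leaves a finite limit equal to $2^{-b}$ times $(-b)$ times the $r$-derivative of $(1-r^2)$ at... — more carefully, $\tfrac{d}{dr}(1-r^2)^{-b} = 2br(1-r^2)^{-b-1}$, so $\big((1-r^2)/2\big)^b f_k'(r) \to 2^{-b}\cdot 2b \cdot (\text{connection coefficient})$, and after inserting the Gauss ratio and simplifying with the relations $\alpha_k+\beta_k$, $\alpha_k\beta_k$ one recovers exactly $A(b,k)$.

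**Low-order bookkeeping and the main obstacle.** For $k=0$ we have $c_0=0$, so $f_0\equiv 1$ solves \eqref{eq2f_k} with $f_0(1)=1$, hence $f_0'\equiv 0$ and the limit is $0$, consistent with the convention $A(b,0)=0$; I would dispatch this case first. The genuine work, and the step I expect to be the main obstacle, is the bookkeeping in the hypergeometric connection formula: one must correctly match which of the two $r=1$ exponents pairs with which Gamma factors, track the $r^k$ and the powers of $2$ through the substitution $t=r^2$, verify that the ``other'' connection coefficient (the one multiplying the analytic solution $\phi_1$) indeed drops out of the weighted-derivative limit, and simplify the product of four Gamma functions coming from Gauss's formula down to the two-Gamma-ratio form in \eqref{eq:Abk} using the defining relations for $\alpha_k,\beta_k$ (which amount to the identities $\Gamma$-arguments satisfying $\alpha_k+\beta_k-b = (n+2k-1)/2$ etc.). This is a finite but delicate computation; I would organize it by first writing the ${}_2F_1$ parameters $a=\alpha_k$, $b'=\beta_k$, $c = (n+1)/2 + k$ (or whatever the substitution yields), then quoting $\lim_{z\to 1}(1-z)^{c-a-b'}\,{}_2F_1(a,b';c;z)$-type asymptotics, and finally substituting back $z=r^2$.
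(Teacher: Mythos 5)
Your overall route---substitute $t=r^2$ and a power of $r$ to reduce \eqref{eq2f_k} to a Gauss hypergeometric equation, discard the second Frobenius solution by regularity at the origin, normalize via $f_k(1)=1$, and extract the limit from the connection formulas at $t=1$---is exactly the one the paper follows in Appendix \ref{apd-Limits}. But two steps in your sketch would fail as written. First, the local exponents of the reduced equation at $t=1$ are $0$ and $\gamma_k-\alpha_k-\beta_k$ with $\gamma_k=(n+2k+1)/2$, and by the defining relations for $\alpha_k,\beta_k$ this second exponent equals $1-b$, not $-b$. So $f_k$ itself is bounded at $r=1$; only $f_k'$ is singular there (like $(1-r^2)^{-b}$ when $b>0$). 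Your own asymptotics expose the error: differentiating your claimed singular piece $(1-r^2)^{-b}\phi_2$ produces $(1-r^2)^{-b-1}$, and multiplying by $\big((1-r^2)/2\big)^b$ leaves $(1-r^2)^{-1}\to\infty$, not a finite limit. With the correct exponent $1-b$ the weighted derivative does converge, and the constant is $2^{-b}$ times the Gauss connection coefficient, as in \eqref{eq:Abk}.

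Second, the argument that ``the analytic piece contributes nothing because its derivative stays bounded and the prefactor $(1-r^2)^b\to 0$'' is valid only for $b>0$. For $b\in(-1,0)$ the weight $\big((1-r^2)/2\big)^b$ blows up while $f_k'(1)$ is finite, so finiteness of the limit is not automatic: it rests on an exact cancellation between the two connection coefficients. The paper isolates this by splitting $f_k'$ as in \eqref{eq:bam}, applying \cite[15.3.6]{AS64}, and checking that
\[
\frac{\Gamma(\gamma_k)\Gamma(1-b)}{\Gamma(\gamma_k -\alpha_k) \Gamma(\gamma_k-\beta_k)} +  \frac{b}{\gamma_k} \frac{\Gamma(\gamma_k+1)\Gamma(-b)}{\Gamma(\gamma_k -\alpha_k)\Gamma(\gamma_k -\beta_k)} =0,
\]
which kills the would-be dominant $O(1)$ contribution and leaves only the $(1-t)^{-b}$ term (now vanishing, since $-b>0$) whose coefficient yields $A(b,k)$ after multiplication by the weight. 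Your proof needs this case distinction---together with the trivial cases $b=0$ (where $f_k=r^k$) and $k=0$, which you did handle---to cover the full stated range $b\in(-1,1)$.
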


For clarity, we put the proof of Proposition \ref{limits} in Appendix \ref{apd-Limits}. Using integration by parts and the equation \eqref{eq:2BecknerRequire} satisfied by $u$, we obtain
\[
\begin{aligned}
\int_{\bB^{n+1}} |\nabla u|^2 &\Big(\frac{1-|z|^2}2\Big)^b dz \\
=& -\int_{\bB^{n+1}} u \partial^i\Big[\Big(\frac{1-|z|^2}2\Big)^b \partial_i u \Big] dz + \int_{\mathbb S^n} \Big[ u \Big(\frac{1-|z|^2}2\Big)^b \langle \nabla u, \omega \rangle \Big] \Big|_{|z| = 1} d\omega \\
=&\int_{\mathbb S^n} \Big[u \Big(\frac{1-r^2}2\Big)^b \sum_{ k \geqslant 1} f_k' (r) Y_k (\omega ) \Big] \Big|_{r= 1} d\omega .
\end{aligned}
\]
Hence, applying Proposition \ref{limits} gives
\begin{equation}\label{eq:Equalnice}
\int_{\bB^{n+1}} |\nabla u|^2 \Big(\frac{1-|z|^2}2\Big)^b dz = \sum_{k=1}^\infty A(b,k) \int_{\bS^n} |Y_k|^2  d\omega .
\end{equation}
In the sequel, we shall choose $b=1-s$ for $s \in (0,1)$. Now we define the function $F$ on $\R^n$ by
\begin{equation}\label{eq:2FfS}
F(x) = f(\mathcal S(x)) J_{\mathcal S}(x)^{\frac{n- s}{2n}}.
\end{equation}
Then we have
\[
\int_{\bS^n} |f|^{\frac{2n}{n- s}} d\omega = \int_{\R^n} |F|^{\frac{2n}{n- s}} dx
\]
and by Lemma 8 in \cite{GN}, we have the following interesting identity
\begin{equation}\label{eq:GN}
\|(-\Delta)^{s/2} F\|_{L^2(\R^n)}^2 = \sum_{k=0}^\infty\frac{\Gamma(k+n/2+s)}{\Gamma(k+n/2-s)} \int_{\bS^n} |Y_k|^2 ( \omega ) d\omega .
\end{equation}
We now use the fractional Sobolev inequality \eqref{eq:SobolevSpaceOrderS/2} applied to $F$ and \eqref{eq:Equalnice} to get
\begin{equation}\label{eq:fSobolev}
\begin{split}
\frac{\Gamma(\frac{n+s}2)}{\Gamma(\frac{n-s}2)} \omega_n^{s/n} & \lt(\int_{\bS^n} |f|^{\frac{2n}{n-s}} d\omega \rt)^{\frac{n-s}n}\\
\leqslant & \sum_{k=0}^\infty\frac{\Gamma(k+n/2+s/2)}{\Gamma(k+n/2-s/2)}  \int_{\bS^n} |Y_k|^2 ( \omega ) d\omega \\
=& \int_{\bB^{n+1}} |\nabla u|^2 \Big(\frac{1-|z|^2}2\Big)^{1-s} dz \\
&+ \sum_{k=0}^\infty \lt(\frac{\Gamma(k+n/2+s/2)}{\Gamma(k+n/2-s/2)}  -A(1-s,k)\rt) \int_{\bS^n} |Y_k|^2 ( \omega ) d\omega .
\end{split}
\end{equation}

Clearly, equality in \eqref{eq:fSobolev} occurs if, and only if, equality in the fractional Sobolev inequality \eqref{eq:SobolevSpaceOrderS/2} occurs. Our next result is as follows.

\begin{theorem}\label{thm2BecknerTypeWeight}
Let $n\geqslant 1$ and $0< s < \min\{2,n\}$. Let $f\in C^\infty(\bS^n)$ and $v$ be a smooth extension of $f$ to the unit ball $\bB^{n+1}$. Suppose that $f$ has a decomposition on spherical harmonics as $f =\sum_{k=0}^\infty Y_k(\omega )$. Then the following inequality holds
\begin{equation}\label{eq:weightedtraceOrder1}
\begin{split}
\frac{\Gamma(\frac{n+s}2)}{\Gamma(\frac{n-s}2)}\omega_n^{s/n} &\Big(\int_{\bS^n} |f|^{\frac{2n}{n-s}} d\omega \Big)^{\frac{n-s}n} \\
\leqslant &\int_{\bB^{n+1}} |\nabla v|^2 \Big(\frac{1-|z|^2}2\Big)^{1-s} dz\\
& + \sum_{k=0}^\infty \Big(\frac{\Gamma(k+n/2+s/2)}{\Gamma(k+n/2-s/2)}  -A(1-s,k)\Big) \int_{\bS^n} |Y_k|^2 d\omega ,
\end{split}
\end{equation}
where $A(1-s,k)$ is given in \eqref{eq:Abk}. Moreover, equality holds if, and only if, $v$ is a harmonic extension of a function of the form
\[
c|1-\langle z_0, \xi \rangle |^{-(n-s)/2},
\]
where $c>0$ is a constant, $\xi \in \mathbb S^n$, and $z_0$ is some fixed point in the interior of $\mathbb B^{n+1}$.
\end{theorem}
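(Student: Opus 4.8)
The plan is to deduce Theorem \ref{thm2BecknerTypeWeight} from the analysis already carried out in Subsection \ref{subsec-2Beckner}. First I would observe that inequality \eqref{eq:fSobolev} establishes \eqref{eq:weightedtraceOrder1} in the special case when $v$ is the particular extension $u$ of $f$ satisfying the degenerate elliptic equation \eqref{eq:2BecknerRequire} with $b = 1-s$. Indeed, for that choice of $u$, the identity \eqref{eq:Equalnice} computes the weighted Dirichlet energy $\int_{\bB^{n+1}} |\nabla u|^2 ((1-|z|^2)/2)^{1-s}\,dz$ exactly as $\sum_{k\geqslant 1} A(1-s,k)\int_{\bS^n}|Y_k|^2\,d\omega$, and then combining with the fractional Sobolev inequality \eqref{eq:SobolevSpaceOrderS/2} for the function $F$ in \eqref{eq:2FfS}, together with the spectral identity \eqref{eq:GN}, yields precisely \eqref{eq:weightedtraceOrder1} with equality whenever the underlying fractional Sobolev inequality is saturated.

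Next I would upgrade this from the special extension $u$ to an arbitrary smooth extension $v$ of $f$. The key point is that $u$ minimizes the weighted energy functional $w \mapsto \int_{\bB^{n+1}} |\nabla w|^2 ((1-|z|^2)/2)^{1-s}\,dz$ among all extensions $w$ of $f$, since \eqref{eq:2BecknerRequire} is exactly the Euler--Lagrange equation of this functional (its weak form is $\int_{\bB^{n+1}}\langle\nabla u,\nabla\varphi\rangle((1-|z|^2)/2)^{1-s}\,dz = 0$ for all $\varphi$ vanishing on $\bS^n$, which after integrating by parts gives \eqref{eq:2BecknerRequire}). One should check that this minimization problem is well-posed: the weight $((1-|z|^2)/2)^{1-s}$ is an $A_2$-Muckenhoupt weight for $s\in(0,1)$, so the associated weighted Sobolev space is a Hilbert space in which the trace map and the direct method apply, giving existence and uniqueness of the minimizer $u$. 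Consequently $\int_{\bB^{n+1}}|\nabla u|^2 ((1-|z|^2)/2)^{1-s}\,dz \leqslant \int_{\bB^{n+1}}|\nabla v|^2 ((1-|z|^2)/2)^{1-s}\,dz$ for every extension $v$, and feeding this into \eqref{eq:fSobolev} produces \eqref{eq:weightedtraceOrder1} for general $v$.

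Finally, for the equality case, I would argue that equality in \eqref{eq:weightedtraceOrder1} forces, on the one hand, $v = u$ (the unique energy minimizer, by strict convexity of the weighted energy), and on the other hand, equality in the fractional Sobolev inequality \eqref{eq:SobolevSpaceOrderS/2} for $F$. The extremals of \eqref{eq:SobolevSpaceOrderS/2} are the standard bubbles $F(x) = c\,(1+|x-x_0|^2)^{-(n-s)/2}$ up to dilation and translation; transporting these back to $\bS^n$ via the stereographic projection $\mathcal S$ using \eqref{eq:2FfS} shows that $f$ must be of the form $f_{z_0}(\xi) = c|1-\langle z_0,\xi\rangle|^{-(n-s)/2}$ for some $z_0$ in the interior of $\bB^{n+1}$, and then $v$ is the corresponding harmonic-type extension $u$. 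Here I should note a subtlety: the extension $u$ satisfying \eqref{eq:2BecknerRequire} is not literally harmonic when $s\neq 1$, so the phrase ``harmonic extension'' in the statement is to be understood in the weighted sense (it is harmonic for the weighted Laplacian), or equivalently, when $s=1$ it reduces to the genuine harmonic extension of Theorem \ref{thmTraceOrder2}; I would make this precise in the write-up. The main obstacle I anticipate is the careful justification of the weighted minimization problem and the interchange of the infinite sum with the boundary integration in deriving \eqref{eq:Equalnice} and \eqref{eq:fSobolev} — in particular verifying that the series converge and that the boundary term picks up exactly $\sum_k A(1-s,k)\int_{\bS^n}|Y_k|^2\,d\omega$ with no loss — but these are handled by the estimates underlying Proposition \ref{limits} in Appendix \ref{apd-Limits}.
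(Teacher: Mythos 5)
Your proposal follows essentially the same route as the paper: establish \eqref{eq:weightedtraceOrder1} for the minimizer $u$ of the weighted energy (i.e., the solution of \eqref{eq:2BecknerRequire} with $b=1-s$) via \eqref{eq:Equalnice}, \eqref{eq:GN} and the fractional Sobolev inequality, then pass to an arbitrary extension $v$ by minimality, and identify the equality case by pulling back the fractional Sobolev extremals through the stereographic projection using \eqref{eq:2FfS}. Your extra remarks (well-posedness of the weighted minimization and the reading of ``harmonic extension'' when $s\neq 1$) are sensible clarifications of points the paper treats as known, not a different argument.
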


\begin{proof}
Let $f \in C^\infty(\bS^n)$, it is well known that the problem
\[
\inf\lt\{\int_{\bB^{n+1}} |\nabla u|^2 \Big(\frac{1-|z|^2}2\Big)^{1-s} dz\, :\, u\big{|}_{\bS^n} =f\rt\}
\]
is attained by some function $u$ such that
\[
\Delta u(z) - \frac{2(1-s) }{1-|z|^2} \langle \nabla u(z) , z \rangle =0
\]
in $\bB^{n+1}$ and $u \equiv f$ on $\bS^n$. The inequality \eqref{eq:weightedtraceOrder1} is then followed from \eqref{eq:fSobolev} and the fact that
\[
\int_{\bB^{n+1}} |\nabla u|^2 \Big(\frac{1-|z|^2}2\Big)^{1-s} dz \leqslant \int_{\bB^{n+1}} |\nabla v|^2 \Big(\frac{1-|z|^2}2\Big)^{1-s} dz
\]
since $u$ is a minimizer. Let us now determine the equality case in \eqref{eq:weightedtraceOrder1}. To this purpose, we need to find all functions $f$. Indeed, the equality case comes from the fact that the equality in the fractional Sobolev inequality \eqref{eq:SobolevSpaceOrderS/2} applied to $F$ occurs. In this scenario, there exist some positive constants $c, \mu$, some $x^0 \in \R^n$ such that
\[
F(x) = c\big( \mu + |x - x^0|^2 \big)^{-(n-s)/2}
\]
for all $x \in \R^n$. To find the corresponding $f$, we make use of \eqref{eq:2FfS} to get
\[
f(\xi) = J_{\mathcal S}(\mathcal S^{-1}(\xi))^{-(n-s)/(2n)} F(\mathcal S^{-1}(\xi))
\]
for $\xi \in \bS^{n+1}$. Therefore, the function $f$ is simply the lifting of the optimizer for the fractional Sobolev inequality in $\R^n$ via the stereographic projection $\mathcal S$. If we denote $\xi = (\xi_1,...,\xi_{n+1})$, then $\mathcal S^{-1}(\xi) = (\xi_1,...,\xi_n)/(1-\xi_{n+1})$, which then gives $J_{\mathcal S}(\mathcal S^{-1}(\xi))=(1-\xi_{n+1})^n$. From this we obtain
\[\begin{split}
f(\xi) = &c\Big[ (1-\xi_{n+1}) \Big( \mu + |x^0|^2 + 1 - 2 \Big\langle \big(x^0,-1\big), \frac \xi {1-\xi_{n+1}} \Big\rangle \Big) \Big]^{-(n-s)/2}\\
=& c\Big[   \mu + |x^0|^2 + 1 -  \big\langle \big(x^0,  \mu + |x^0|^2 - 1 \big),  \xi  \big\rangle  \Big]^{-(n-s)/2}
\end{split}\]
for $\xi \in \bS^{n+1}$. Observe that $|\mu + |x^0|^2 - 1|/(\mu + |x^0|^2 + 1)<1$. Thus, we have just shown that $f$ takes the form
\[
c \big( 1 - \langle z_0 , \xi \rangle \big)^{-(n-s)/2},
\]
for some constant $c>0$ and for some fixed point $z_0$ in the interior of $\mathbb B^{n+1}$. From this we have the conclusion.
\end{proof}

We note that the weighted Sobolev trace inequality \eqref{eq:weightedtraceOrder1} shares some similarities with the weighted trace inequality obtained by Case in \cite[Theorem 1.1]{Case2017} and the weighted trace inequality obtained by Jin and Xiong in \cite[Theorem 1.1]{JinXiong}. While the weighted trace inequality of Case involves the interior $L^2$-norm of the extension $v$, the weighted trace inequality of Jin and Xiong only requires a boundary $L^2$-norm of $f$. In our inequality \eqref{eq:weightedtraceOrder1}, a term involving $\int_{\bS^n} |Y_k|^2 d\omega$ appears, which, more or less, involves a boundary $L^2$-norm of $f$. It is also worth noticing that when $n \geqslant 2$, our restriction for $s$ is that $0<s<2$ and we are not sure if this is optimal compared with \cite[Remark 1.2]{JinXiong}.

As an application of Theorem \ref{thm2BecknerTypeWeight}, let us consider the case $n \geqslant 2$ and $s = 1$. Recall that
\[
\int_{\mathbb S^n} |f|^2 d\omega = \int_{\bS^n} |Y_k|^2 d\omega . 
\]
Hence, in the present scenario, Inequality \eqref{eq:weightedtraceOrder1} becomes the usual trace inequality \eqref{eq:TraceOrder2}. We also note that if $u$ is a harmonic extension of $f$, then $f_k$ solves $L_k f_k = 0$. From this and $f_k(1)=1$ we obtain $f_k(r)=r^k$. Consequently, we get
\[
\langle \nabla u , \omega \rangle = \sum_{k=0}^\infty k  Y_k (\omega).
\]
Thus,
\[
\int_{\mathbb B^{n+1}} |\nabla u|^2 dz = \int_{\mathbb S^n}  u \langle \nabla u, \omega \rangle dz = \sum_{k=0}^\infty k \int_{\mathbb S^n} |Y_k|^2 d\omega;
\]
see \cite[page 232]{beckner1993}. Therefore, we can mimic the argument in \eqref{eq:fSobolev} to get the following result.

\begin{theorem}\label{thm2BecknerTypeNoWeight}
Let $n\geqslant 1$ and $0< s <n$. Let $f\in C^\infty(\bS^n)$ and $v$ be a harmonic extension of $f$ to the unit ball $\bB^{n+1}$. Suppose that $f$ has a decomposition on spherical harmonics as $f =\sum_{k=0}^\infty Y_k(\omega )$. Then the following inequality holds
\begin{equation}\label{eq:fBecknerTypeNoWeight}
\begin{split}
\frac{\Gamma(\frac{n+s}2)}{\Gamma(\frac{n-s}2)} &\omega_n^{s/n}  \lt(\int_{\bS^n} |f|^{\frac{2n}{n-s}} d\omega \rt)^{\frac{n-s}n}\\
\leqslant & \int_{\bB^{n+1}} |\nabla v|^2 dz  + \sum_{k=0}^\infty \Big(\frac{\Gamma(k+n/2+s/2)}{\Gamma(k+n/2-s/2)}  - k \Big) \int_{\bS^n} |Y_k|^2 d\omega .
\end{split}
\end{equation}
Moreover, equality holds if, and only if, $v$ is a harmonic extension of a function of the form
\[
c|1-\langle z_0, \xi \rangle |^{-(n-s)/2},
\]
where $c>0$ is a constant, $\xi \in \mathbb S^n$, and $z_0$ is some fixed point in the interior of $\mathbb B^{n+1}$.
\end{theorem}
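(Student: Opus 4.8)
The plan is to follow closely the chain of inequalities that produced \eqref{eq:fSobolev}, but now with the harmonic extension of $f$ in place of the weighted extension used in Theorem \ref{thm2BecknerTypeWeight}; this removes the weight $\big(\tfrac{1-|z|^2}2\big)^{1-s}$, replaces the constants $A(1-s,k)$ by $k$, and dispenses with Proposition \ref{limits} altogether, which is why the restriction $s<2$ imposed in Theorem \ref{thm2BecknerTypeWeight} is no longer needed. First I would compute the Dirichlet energy of the harmonic extension in terms of the spherical harmonic data of $f$. Writing $v(z)=\sum_{k\geqslant 0} f_k(r)Y_k(\omega)$ with $r=|z|$, harmonicity of $v$ forces $L_k f_k=0$, which is the case $b=0$ of \eqref{eq2f_k}; since $r^k$ is the unique solution regular at the origin with $f_k(1)=1$ (the indicial roots being $k$ and $-(n+k-1)$), we get $f_k(r)=r^k$. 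Hence $\langle\nabla v,\omega\rangle=\sum_{k\geqslant 0} k\,Y_k(\omega)$ on $\bS^n$, and integrating by parts on $\bB^{n+1}$ gives
\[
\int_{\bB^{n+1}} |\nabla v|^2\,dz = \int_{\bS^n} v\,\langle\nabla v,\omega\rangle\,d\omega = \sum_{k=0}^\infty k\int_{\bS^n}|Y_k|^2\,d\omega,
\]
as noted on \cite[page 232]{beckner1993}.

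Next I would transplant $f$ to $\R^n$ by setting $F(x)=f(\mathcal S(x))\,J_{\mathcal S}(x)^{(n-s)/(2n)}$, so that $\int_{\R^n}|F|^{2n/(n-s)}\,dx=\int_{\bS^n}|f|^{2n/(n-s)}\,d\omega$, and then apply the sharp fractional Sobolev inequality \eqref{eq:SobolevSpaceOrderS/2} to $F$ together with the spectral identity \eqref{eq:GN}. Exactly as in \eqref{eq:fSobolev}, this produces
\[
\frac{\Gamma(\frac{n+s}2)}{\Gamma(\frac{n-s}2)}\,\omega_n^{s/n}\,\Big(\int_{\bS^n}|f|^{\frac{2n}{n-s}}\,d\omega\Big)^{\frac{n-s}n} \leqslant \sum_{k=0}^\infty \frac{\Gamma(k+n/2+s/2)}{\Gamma(k+n/2-s/2)}\int_{\bS^n}|Y_k|^2\,d\omega,
\]
all series converging since $f$ is smooth while the coefficient grows like $k^s$. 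Writing $\frac{\Gamma(k+n/2+s/2)}{\Gamma(k+n/2-s/2)} = k + \big(\frac{\Gamma(k+n/2+s/2)}{\Gamma(k+n/2-s/2)} - k\big)$ and inserting the energy identity of the previous paragraph into the piece $\sum_k k\int_{\bS^n}|Y_k|^2\,d\omega$ yields precisely \eqref{eq:fBecknerTypeNoWeight}.

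For the equality statement I would argue exactly as in the proof of Theorem \ref{thm2BecknerTypeWeight}: equality in \eqref{eq:fBecknerTypeNoWeight} forces equality in the fractional Sobolev inequality \eqref{eq:SobolevSpaceOrderS/2} applied to $F$, hence $F(x)=c(\mu+|x-x^0|^2)^{-(n-s)/2}$ for some $c,\mu>0$ and $x^0\in\R^n$; pulling this back through $\mathcal S^{-1}$, using $J_{\mathcal S}(\mathcal S^{-1}(\xi))=(1-\xi_{n+1})^n$, rewrites $f$ as $c\,|1-\langle z_0,\xi\rangle|^{-(n-s)/2}$ with $z_0$ in the interior of $\bB^{n+1}$, and $v$ is then its harmonic extension. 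I do not expect a real obstacle: the only genuinely new input over \eqref{eq:fSobolev} is the energy identity for harmonic extensions, which is elementary, and because $v$ is assumed harmonic from the outset there is no minimization over competing extensions to carry out, so the displayed inequalities close immediately.
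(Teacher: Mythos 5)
Your proposal is correct and follows essentially the same route as the paper: expand the harmonic extension in spherical harmonics to get $f_k(r)=r^k$ and hence $\int_{\bB^{n+1}}|\nabla v|^2\,dz=\sum_k k\int_{\bS^n}|Y_k|^2\,d\omega$, then apply the fractional Sobolev inequality \eqref{eq:SobolevSpaceOrderS/2} to the transplanted function $F$ together with the spectral identity \eqref{eq:GN}, splitting the coefficient as $k+\bigl(\tfrac{\Gamma(k+n/2+s/2)}{\Gamma(k+n/2-s/2)}-k\bigr)$, exactly as the paper does by "mimicking the argument in \eqref{eq:fSobolev}". The treatment of the equality case via the optimizers of \eqref{eq:SobolevSpaceOrderS/2} pulled back through the stereographic projection also matches the paper's argument in Theorem \ref{thm2BecknerTypeWeight}.
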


Apparently, Inequality \eqref{eq:fBecknerTypeNoWeight} includes \cite{beckner1993} as a special case because when $s = 1$ our inequality \eqref{eq:fBecknerTypeNoWeight} becomes Beckner's inequality. In the final part of this section, we treat the limiting case $n = 1$.


\subsection{A classical Ledebev--Milin inequality of order two on $\bB^2$}
\label{subsec-2LM}

Let us now consider the limiting case of Theorem \ref{thm2BecknerTypeNoWeight}, namely, $n=1$ and $0 < s < 1$. Suppose that $f \in C^\infty(\bS^1)$ with $\int_{\bS^1} f d\omega =0$ and let $v$ be a smooth extension of $f$ in $\bB^2$. As before, we decompose $f$ in terms of spherical harmonics to get
\[
f = \sum_{k=1}^\infty Y_k(\omega )
\]
Clearly, the function $1 + ((1-s)/2) v$ is also a smooth extension of $1 + ((1-s)/2) f$ in $\mathbb B^2$ and $\omega_1 = 2\pi$. Therefore, we can apply Theorem \ref{thm2BecknerTypeWeight} to get
\begin{align*}
\frac{\Gamma(\frac{1+s}2)}{\Gamma(\frac{1-s}2)}& (2\pi)^{s} \Big(\int_{\bS^1} \Big(1 + \frac{1-s}2 f\Big)^{\frac{2}{1-s}} d\omega \Big)^{1-s} \\
&\leqslant \frac{(1-s)^2}4 \int_{\bB^2} |\nabla v|^2 \Big(\frac{1-|z|^2}2\Big)^{1-s} dz + \frac{\Gamma(\frac{1+s}2)}{\Gamma(\frac{1-s}2)} 2\pi \\
&\quad + \frac{(1-s)^2}4 \sum_{k=1}^\infty \Big(\frac{\Gamma(k+1/2+s/2)}{\Gamma(k+ 1/2-s/2)} -A(1-s,k)\Big)\int_{\bS^1}|Y_k|^2 d\omega .
\end{align*}
Dividing both sides by $(1-s)^2$ and making use of $\Gamma(\frac{3-s}2) = (1/2)(1-s)\Gamma(\frac{1-s}2)$ to get
\begin{align*}
\frac{\Gamma(\frac{1+s}2)}{\Gamma(\frac{3-s}2)} \frac 1{1-s} &\Big[ \Big(\frac1{2\pi}\int_{\bS^1} \Big(1 + \frac{1-s}2 f\Big)^{\frac{2}{1-s}} d\omega \Big)^{1-s}-1 \Big] \\
\leqslant &\frac1{4\pi}\int_{\bB^2} |\nabla v|^2 \Big(\frac{1-|z|^2}2\Big)^{1-s} dz \\
& + \frac{1}{4\pi}\sum_{k=1}^\infty \Big( \frac{\Gamma(k+1/2+s/2)}{\Gamma(k+ 1/2-s/2)} -A(1-s,k)\Big)\int_{\bS^1}|Y_k|^2 d\omega .
\end{align*}
Note that 
\[
\lim_{s\to 1/2} \frac{\Gamma(k+1/2+s/2)}{\Gamma(k+ 1/2-s/2)} -A(1-s,k) =0,
\]
for any $k \geqslant 1$. Hence ``formally'' letting $s\nearrow 1$ and applying the l'H\^opital rule, we obtain
\[
\log \Big(\frac1{2\pi} \int_{\bS^1} e^f d\omega \Big) \leqslant \frac1{4\pi} \int_{\bB^2} |\nabla v|^2 dx.
\]
for any function $f$ with $\int_{\bS^1} f d\omega =0$. For general function $f$, we apply the previous inequality for $f-(1/(2\pi)) \int_{\bS^1} f d\omega$ to get the classical Lebedev--Milin inequality; see \cite{LM}, see also \cite[Inequality (4')]{OPS}.

\begin{theorem}[Lebedev--Milin inequality of order two]\label{thmSMOrder2}
Let $f\in C^\infty(\bS^1)$ and suppose that $v$ is a smooth extension of $f$ to the unit ball $\bB^2$. Then we have the following sharp trace inequality
\begin{equation}\label{eqSMorder2}
\log \Big(\frac1{2\pi} \int_{\bS^1} e^f d\omega \Big) \leqslant \frac1{4\pi} \int_{\bB^2} |\nabla v|^2 dx + \frac1{2\pi} \int_{\bS^1} f d\omega .
\end{equation}
Moreover, equality in \eqref{eqSMorder2} holds if, and only if, $v$ is a harmonic extension of a function of the form
\[
c - \log |1 -  \langle z_0, \xi \rangle|,
\]
where $c>0$ is a constant, $\xi \in \bS^1$, and $z_0$ is some fixed point in the interior of $\mathbb B^2$
\end{theorem}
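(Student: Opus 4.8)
The plan is to obtain \eqref{eqSMorder2} as the limiting case $s \nearrow 1$ of the weighted Beckner inequality \eqref{eq:weightedtraceOrder1} with $n = 1$. First I would reduce to mean-zero data: once \eqref{eqSMorder2} is established for every $f \in C^\infty(\bS^1)$ with $\int_{\bS^1} f\, d\omega = 0$, the general case follows by applying the mean-zero version to $f - (2\pi)^{-1}\int_{\bS^1} f\, d\omega$, which reproduces exactly the linear correction term on the right-hand side of \eqref{eqSMorder2}. So assume $f = \sum_{k\geqslant 1} Y_k$ has zero mean, and let $v$ be a smooth extension of $f$ to $\bB^2$.

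Next, for each $s \in (0,1)$ I would apply Theorem \ref{thm2BecknerTypeWeight} to the function $1 + \tfrac{1-s}{2} v$, which is a smooth extension of $1 + \tfrac{1-s}{2} f$ whose $Y_0$-component equals $1$. Using $\omega_1 = 2\pi$ and the identity $\Gamma(\tfrac{3-s}{2}) = \tfrac{1-s}{2}\Gamma(\tfrac{1-s}{2})$ to split off the $k = 0$ term, then dividing the resulting inequality by $(1-s)^2$, one is left with an inequality whose left-hand side is, up to the factor $\Gamma(\tfrac{1+s}{2})/\Gamma(\tfrac{3-s}{2}) \to 1$, the difference quotient
\[
\frac1{1-s}\Big[\Big(\frac1{2\pi}\int_{\bS^1}\big(1 + \tfrac{1-s}2 f\big)^{\frac2{1-s}}\,d\omega\Big)^{1-s} - 1\Big],
\]
and whose right-hand side is $\tfrac1{4\pi}\int_{\bB^2}|\nabla v|^2 \big(\tfrac{1-|z|^2}2\big)^{1-s}\,dz$ plus $\tfrac1{4\pi}\sum_{k\geqslant 1}\big(\tfrac{\Gamma(k+1/2+s/2)}{\Gamma(k+1/2-s/2)} - A(1-s,k)\big)\int_{\bS^1}|Y_k|^2\,d\omega$. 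The key analytic input is that $\tfrac{\Gamma(k+1/2+s/2)}{\Gamma(k+1/2-s/2)} - A(1-s,k) \to 0$ as $s \to 1$ for every fixed $k$, so the series drops out in the limit; the weighted Dirichlet integral tends to $\tfrac1{4\pi}\int_{\bB^2}|\nabla v|^2\,dz$; and on the left, l'Hôpital's rule in $s$ (equivalently, a Taylor expansion using $(1 + \tfrac{1-s}{2} f)^{2/(1-s)} \to e^f$) turns the bracketed difference quotient into $\log\big(\tfrac1{2\pi}\int_{\bS^1} e^f\,d\omega\big)$. This yields \eqref{eqSMorder2} for mean-zero $f$, hence for all $f$.

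The main obstacle is making this limiting argument rigorous rather than merely ``formal'': one must justify interchanging $\lim_{s\to1}$ with the infinite sum over $k$ (a uniform-in-$k$ estimate on $\tfrac{\Gamma(k+1/2+s/2)}{\Gamma(k+1/2-s/2)} - A(1-s,k)$ for $s$ near $1$, combined with the rapid decay of $\sum_k \int_{\bS^1}|Y_k|^2$ for smooth $f$), justify passing to the limit in the weighted Dirichlet integral by dominated convergence using smoothness of $v$ up to the boundary, and confirm the differentiability needed for the l'Hôpital step. Finally, the equality case is inherited from Theorem \ref{thm2BecknerTypeWeight}: equality forces $1 + \tfrac{1-s}{2} f$ to be a positive constant multiple of $|1 - \langle z_0,\xi\rangle|^{-(1-s)/2}$; expanding both sides in powers of $1-s$ and letting $s \to 1$ identifies $f$ with $c - \log|1 - \langle z_0,\xi\rangle|$, and $v$ with the corresponding harmonic extension.
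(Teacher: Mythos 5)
Your proposal follows essentially the same route as the paper: apply the weighted Beckner inequality of Theorem \ref{thm2BecknerTypeWeight} to $1+\tfrac{1-s}{2}v$ for mean-zero $f$, divide by $(1-s)^2$ using $\Gamma(\tfrac{3-s}{2})=\tfrac{1-s}{2}\Gamma(\tfrac{1-s}{2})$, and let $s\nearrow 1$, using that $\tfrac{\Gamma(k+1/2+s/2)}{\Gamma(k+1/2-s/2)}-A(1-s,k)\to 0$ so the spherical-harmonic correction terms vanish, then recover general $f$ by subtracting the mean. Your remarks on justifying the interchange of limit and sum and the equality case are in fact more careful than the paper's admittedly ``formal'' passage to the limit, but the argument is the same.
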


We note that we can also apply Theorem \ref{thm2BecknerTypeNoWeight} to obtain \eqref{eqSMorder2} whose proof is left for interested reader.


\section{Sobolev trace inequality of order four: Proof of Theorem \ref{thmTraceOrder4}}
\label{sec-Order4}

The main purpose of this section is to provide a new proof of the Sobolev trace inequality of four two on $\mathbb S^n$ based on Escobar's approach. As in the preceding section, the key ingredient in this approach is the sharp Sobolev trace inequality
\[
2\frac{\Gamma(\frac{n+3}2)}{\Gamma(\frac{n-3}2)} \omega_n^{3/n} \Big(\int_{\R^n} |U(x,0)|^{\frac{2n}{n-3}} dx\Big)^{\frac{n-3}n} \leqslant \int_{\R^{n+1}_+} |\Delta U(x,y)|^2 dx dy.
\]
for any function $U$ satisfying the boundary condition $\partial_y U(x,y)\big{|}_{y=0} = 0$; see \eqref{eq:SobolevTraceSpaceOrder4}.

\subsection{Sharp Sobolev trace inequality of order four on $\bB^{n+1}$: Proof of Theorem \ref{thmTraceOrder4}}

This subsection is devoted to a proof of Theorem \ref{thmTraceOrder4}. For clarity, we divide the proof into several steps.

\medskip\noindent\textbf{Step 1}. To proceed the proof, we let $f\in C^\infty(\bS^n)$ and consider $u$ a biharmonic extension of $f$ to $\mathbb B^n$ satisfying $u \in \mathscr V_f$ with
\begin{equation}\label{eq:varproblem4set}
\mathscr V_f = \Big \{ w : \, w \big{|}_{\bS^n} = f,\, \frac{\partial}{\partial \nu} w\big{|}_{\bS^n} = -\frac{n-3}2 f \Big\}.
\end{equation}
Let $U$ be function defined on $\R^{n+1}_+$ by
\[
U = u(B) \Phi^{\frac{n-3}2}.
\]
As always, $u(B)$ is being understood as $u \circ B$. Thanks to the bi-harmonicity of $u$, we can apply Proposition \ref{thmIDENTITY} for $k=2$ to get
\[
\Delta^2 U = (\Delta^2 u)(B) \Phi^{\frac{n+5}2} =0.
\]
Thus, we have just proved that $U$ is a biharmonic extension of $f$ to the upper halfspace $\R_+^{n+1}$. Recall that the Jacobian matrix of $B$ is given by
\[
DB(x,y) =
\begin{pmatrix}
\Phi I_n - \Phi^2 x \otimes x & - \Phi^2 x(1+y) \\
\Phi^2 x^t(1+y) &- \Phi + \Phi^2 (1+y)^2
\end{pmatrix}.
\]
Hence
\begin{align}\label{eqPartial_yU(x,y)}
\begin{aligned}
\partial_y U=& 
\left[ 
\begin{aligned}
\sum_{j=1}^n (\partial_j u)(B ) \big(- \Phi^2 x_j(1+y) \big) + \\
(\partial_{n+1} u)(B ) \big( -\Phi + \Phi^2 (1+y)^2 \big) 
\end{aligned}
\right] \Phi^{\frac{n-3}2}
 -\frac{ n-3}2 u (B) (1+y) \Phi^{\frac{n-1}2}.
\end{aligned}
\end{align}
In particular, there holds
\[
\begin{aligned}
\partial_y U(x,0) =& -\Big[ B(x,0) \cdot \nabla u(B(x,0)) +\frac{n-3}2 u(B(x,0)) \Big]\Big( \frac 2{1+|x|^2} \Big)^{\frac{n-1}2}.
\end{aligned}
\]
Notice that $B(x,0)$ is normal to $\mathbb S^n$, thus $B(x,0) \cdot \nabla u(B(x,0))$ becomes $\partial_\nu u(B(x,0))$.
Thus the Neumann boundary condition \eqref{eq:4NeumannCondition} becomes
\[
\partial_y U(x,0) = 0,
\]
thanks to $u(B(x,0)) = f(B(x,0))$. From this, we can apply the Sobolev trace inequality \eqref{eq:SobolevTraceSpaceOrder4} for $U$. Our aim is to transform this trace inequality on $\R_+^{n+1}$ to the desired trace inequality on $\bB^{n+1}$. To this purpose, we need to compute $\int_{\R^n} |U(x,0)|^{\frac{2n}{n-3}} dx$ and $\int_{\R^{n+1}_+} |\Delta U(x,y)|^2 dx dy$ as shown in the rest of our argument.

\medskip\noindent\textbf{Step 2}. First we compute $\int_{\R^n} |U(x,0)|^{\frac{2n}{n-3}} dx$ in terms of $u$. Still using the stereographic projection $\mathcal S$ we deduce that
\[
|U(x,0)|^{\frac{2n}{n-3}} = |u(\mathcal S(x))|^{\frac{2n}{n-3}} J_{\mathcal S}(x) =|f(\mathcal S(x))|^{\frac{2n}{n-3}} J_{\mathcal S}(x).
\]
From this we deduce that
\begin{equation}\label{eq:4LHS}
\int_{\R^n} |U(x,0)|^{\frac{2n}{n-3}} dx = \int_{\bS^n} |f|^{\frac{2n}{n-3}} d\omega .
\end{equation}

\medskip\noindent\textbf{Step 3}. We now compute $\Delta U(x,y)$ in terms of $u$. Clearly,
\[
\Delta U = \Delta (u \circ B) \Phi^{\frac{n-3}2} + 2 \big \langle \nabla (u \circ B ) ,\nabla \Phi^{\frac{n-3}2} \big \rangle  + u( B ) \Delta \Phi^{\frac{n-3}2}.
\]
By Lemma \ref{lem:NablaDelta}, there holds
\[
\nabla \Phi^{\frac{n-3}2} =-\frac{n-3}2 ( x, 1+y ) \Phi^\frac{n-1}2 ,\quad \Delta \Phi^{\frac{n-3}2} =-(n-3) \Phi^\frac{n-1}2.
\]
In view of Lemma \ref{lem:Identity}, we easily get
\[
\nabla (u \circ B ) = - \Phi \langle \nabla u(B ), (x,-1-y) \rangle
\]
and
\[
\Delta (u \circ B) = \Phi^2 \Delta u(B ) - (n-1) \Phi^2 \langle \nabla u(B ), (x,-1-y) \rangle.
\]
Thus, we have just compute
\begin{align}\label{eqO4-DeltaU}
\begin{aligned}
\Delta U = &\big[ \Delta u(B ) - (n-1) \langle \nabla u(B ), (x,-1-y) \rangle \big] \Phi^{\frac{n+1}2}\\
&+ (n-3) \langle \nabla u(B ), (x, -1 -y ) \rangle \Phi^{\frac{n+1}2}   -(n-3) u (B ) \Phi^{\frac{n-1}2}\\
=& \Delta u(B ) \Phi^{\frac{n+1}2} - 2 \langle \nabla u (B ) , (x,-1-y) \rangle \Phi^{\frac{n+1}2} -(n-3) u (B ) \Phi^{\frac{n-1}2}.
\end{aligned}
\end{align}
Making use of \eqref{eqPhi-B-e}, we can further write $\Delta U$ as follows
\[
\begin{aligned}
\Delta U = \Big[ \Delta u(B ) 
- 4 \frac{ \langle \nabla u (B ) , B -e_{n+1} \rangle}{|B(x,y) -e_{n+1}|^2} 
- 2(n-3) \frac{ u (B )}{|B(x,y) -e_{n+1}|^2} 
\Big] \Phi^{\frac{n+1}2}.
\end{aligned}
\]
From this, integrating over $\R_+^{n+1}$ leads us to
\begin{align}\label{eq:order41}
\begin{aligned}
\int_{\R^{n+1}_+}|\Delta U|^2 dx dy =& \int_{\bB^{n+1}} |\Delta u|^2 dz + 16\int_{\bB^{n+1}} \frac{ \langle \nabla u, z-e_{n+1} \rangle^2}{|z-e_{n+1}|^4} dz \\
& + 4(n-3)^2 \int_{\bB^{n+1}} \frac{u^2}{|z-e_{n+1}|^4} dz \\
&-8\int_{\bB^{n+1}} \Delta u \, \Big\langle \nabla u, \frac{z-e_{n+1}}{|z-e_{n+1}|^2} \Big\rangle dz \\
& -4(n-3) \int_{\bB^{n+1}} \frac{u \Delta u}{|z-e_{n+1}|^2}dz \\
&+ 16(n-3) \int_{\bB^{n+1}}u \Big\langle \nabla u, \frac{z-e_{n+1}}{|z-e_{n+1}|^4} \Big\rangle dz.
\end{aligned}
\end{align}
We now compute the last three terms on the right hand side of \eqref{eq:order41}. First we compute the term involving $\Delta u \, \langle \nabla u , (z-e_{n+1})/|z-e_{n+1}|^2 \rangle $. Recall that $\omega = x/|x|$. Using integration by parts, we first have
\[
\begin{aligned}
\int_{\bB^{n+1}} \Delta u \, &\Big\langle \nabla u ,  \frac{z-e_{n+1}}{|z-e_{n+1}|^2} \Big\rangle dz \\
=& -\int_{\bB^{n+1}} \Big\langle \nabla u , \nabla \Big\langle \nabla u , \frac{z-e_{n+1}}{|z-e_{n+1}|^2} \Big\rangle \Big\rangle dz 
+ \int_{\bS^n} \frac{\partial u}{\partial \nu} \Big \langle  \nabla u , \frac{\omega-e_{n+1}}{|\omega-e_{n+1}|^2} \Big \rangle  d\omega \\
=& -\int_{\bB^{n+1}} \Big\langle \nabla u , \frac{ \nabla \big\langle \nabla u , z-e_{n+1} \big\rangle}{|z-e_{n+1}|^2} \Big\rangle dz 
+ 2\int_{\bB^{n+1}} \frac{\langle \nabla u, z-e_{n+1} \rangle^2}{|z-e_{n+1}|^4} dz \\
& + \int_{\bS^n} \frac{\partial u}{\partial \nu}\nabla u\cdot \frac{\omega-e_{n+1}}{|\omega-e_{n+1}|^2} d\omega ,
\end{aligned}
\]
where we have used $\nabla |z -e_{n+1}|^2 = 2 (z-e_{n+1} )$ once to get the middle term on the right most hand side of the preceding computation. It remains to compute the first term on the right most hand side. For simplicity, we use the Einstein convention with indexes running from $1$ to $n+1$. It is not hard to verify that
\[
\begin{aligned}
\partial^i u \, \partial_i ( \langle \nabla u , z-e_{n+1} \rangle ) =& \partial^i u \, \partial_i \big( \partial^j u \, (z-e_{n+1})_j \big) \\
= & \partial^i u \, \big[ (\partial_i \partial^j u) \, (z-e_{n+1})_j + (\partial^j u) \, \partial_i \big( (z-e_{n+1})_j \big)\big] \\
=& \frac 12 \partial^j \big( |\nabla u|^2 \big) \, (z-e_{n+1})_j + (\partial^i u)(\partial_i u).
\end{aligned}
\]
Thus,
\[
\Big\langle \nabla u , \frac{ \nabla \big\langle \nabla u , z-e_{n+1} \big\rangle}{|z-e_{n+1}|^2} \Big\rangle = \frac{ |\nabla u|^2 + \frac 12 \big\langle \nabla (|\nabla u|^2) , z-e_{n+1} \big\rangle}{|z-e_{n+1}|^2} 
\]
which helps us to write
\[
\begin{aligned}
\int_{\bB^{n+1}} \Delta u \, \Big\langle& \nabla u ,  \frac{z-e_{n+1}}{|z-e_{n+1}|^2} \Big\rangle dz \\
=& -\int_{\bB^{n+1}} \frac{|\nabla u|^2}{|z -e_{n+1}|^2} dz + 2\int_{\bB^{n+1}} \frac{\langle \nabla u, z-e_{n+1} \rangle^2}{|z-e_{n+1}|^4} dz \\
& -\frac12\int_{\bB^{n+1}} \Big \langle  \nabla (|\nabla u|^2),  \frac{z-e_{n+1}}{|z-e_{n+1}|^2} \Big \rangle   dz + \int_{\bS^n} \frac{\partial u}{\partial \nu} \Big \langle  \nabla u , \frac{\omega-e_{n+1}}{|\omega-e_{n+1}|^2} \Big \rangle  d\omega 
\end{aligned}
\]
and by applying integration by parts we arrive at
\begin{align}\label{eq:IBPorder4}
\begin{aligned}
\int_{\bB^{n+1}} \Delta u \,& \Big\langle \nabla u ,  \frac{z-e_{n+1}}{|z-e_{n+1}|^2} \Big\rangle dz \\
=&\frac{n-3}2\int_{\bB^{n+1}} \frac{|\nabla u|^2}{|z -e_{n+1}|^2} dz + 2\int_{\bB^{n+1}} \frac{\langle \nabla u, z-e_{n+1} \rangle^2}{|z-e_{n+1}|^4} dz \\
& - \frac12 \int_{\bS^n} |\nabla u|^2 \frac{ \langle \omega -e_{n+1}, \omega \rangle}{|\omega -e_{n+1}|^2} d\omega 
+ \int_{\bS^n} \frac{\partial u}{\partial \nu} \Big \langle  \nabla u , \frac{\omega-e_{n+1}}{|\omega-e_{n+1}|^2} \Big \rangle  d\omega .
\end{aligned}
\end{align}
Now we compute the term involving $u \langle \nabla u, (z -e_{n+1})/|z -e_{n+1}|^4 \rangle$ . We again apply integration by parts to get
\begin{align}\label{eq:IBPorder42}
\begin{aligned}
\int_{\bB^{n+1}} u \Big \langle \nabla u, &\frac{z -e_{n+1}}{|z -e_{n+1}|^4} \Big\rangle dz \\
=& \frac12 \int_{\bB^{n+1}} \Big \langle \nabla u^2, \frac{z -e_{n+1}}{|z -e_{n+1}|^4} \Big \rangle dz \\
=& -\frac{n-3}2 \int_{\bB^{n+1}} \frac{u^2}{|z -e_{n+1}|^4} 
+\frac12 \int_{\bS^n} u^2 \frac{\langle \omega -e_{n+1}, \omega \rangle}{|\omega -e_{n+1}|^4} d\omega .
\end{aligned}
\end{align}
Finally, the term involving $u \Delta u/|z-e_{n+1}|^2$ can be computed similarly to get
\begin{align}\label{eq:IBPorder41}
\begin{aligned}
\int_{\bB^{n+1}} \frac{u \Delta u}{|z-e_{n+1}|^2}dz =& \int_{\bS^n}\frac{\partial u}{\partial \nu} \frac{u}{|\omega -e_{n+1}|^2} d\omega 
- \int_{\bB^{n+1}} \Big \langle \nabla u , \nabla \frac{u}{|z-e_{n+1}|^2} \Big \rangle dz\\
=&-\int_{\bB^{n+1}} \frac{|\nabla u|^2}{|z-e_{n+1}|^2}dz + 2\int_{\bB^{n+1}} \Big \langle  u\nabla u , \frac{z -e_{n+1}}{|z -e_{n+1}|^4} \Big \rangle dz \\
& + \int_{\bS^n}\frac{\partial u}{\partial \nu} \frac{u}{|\omega -e_{n+1}|^2} d\omega \\
=&-\int_{\bB^{n+1}} \frac{|\nabla u|^2}{|z-e_{n+1}|^2}dz -(n-3) \int_{\bB^{n+1}} \frac{u^2}{|z -e_{n+1}|^4} \\
&+ \int_{\bS^n} u^2 \frac{\langle \omega -e_{n+1}, \omega \rangle}{|\omega -e_{n+1}|^4} d\omega + \int_{\bS^n}\frac{\partial u}{\partial \nu} \frac{u}{|\omega -e_{n+1}|^2} d\omega ,
\end{aligned}
\end{align}
where we have used \eqref{eq:IBPorder42} once. Plugging \eqref{eq:IBPorder4}, \eqref{eq:IBPorder42}, \eqref{eq:IBPorder41} into \eqref{eq:order41} and using
\[
(\omega -e_{n+1})\cdot \omega = 1 - \omega_{n+1}, \quad |\omega -e_{n+1}|^2 = 2(1 -\omega_{n+1})
\]
to get
\[
\begin{aligned}
\int_{\R^{n+1}_+}|\Delta U|^2 dx dy &= \int_{\bB^{n+1}} |\Delta u|^2 dz + 2 \int_{\bS^n} |\nabla u|^2 d\omega -4\int_{\bS^n} \frac{\partial u}{\partial \nu} \Big \langle  \nabla u ,\frac{\omega -e_{n+1}}{1-\omega_{n+1}} \Big \rangle  d\omega \\
&\quad -2(n-3) \int_{\bS^n} \frac{\partial u}{\partial \nu} \frac{u}{1 -\omega_{n+1}} d\omega + (n-3) \int_{\bS^n} \frac{u^2}{1 -\omega_{n+1}} d\omega .
\end{aligned}
\]
Using the decomposition $\nabla u (\omega ) = \omega \partial_\nu u(\omega ) + \widetilde\nabla u(\omega )$, the assumptions $\partial_\nu u = -((n-3)/2) f$ and $u =f$ on $\bS^n$, and the fact that $\langle \widetilde \nabla u, \omega \rangle=0$, we further have
\begin{align}\label{eq:order43}
\begin{aligned}
\int_{\R^{n+1}_+}|\Delta U|^2 dx dy = &\int_{\bB^{n+1}} |\Delta u|^2 dz + 2\int_{\bS^n} |\widetilde \nabla f|^2 d\omega -2 \int_{\bS^n} \Big(\frac{\partial u}{\partial \nu}\Big)^2 d\omega \\
& -4\int_{\bS^n} \frac{\partial u}{\partial \nu} \Big \langle \widetilde \nabla u , \frac{\omega -e_{n+1}}{1-\omega_{n+1}} \Big \rangle d\omega-2(n-3) \int_{\bS^n} \frac{\partial u}{\partial \nu} \frac{u}{1 -\omega_{n+1}} d\omega \\
& + (n-3) \int_{\bS^n} \frac{u^2}{1 -\omega_{n+1}} d\omega \\
=& \int_{\bB^{n+1}} |\Delta u|^2 dz + 2\int_{\bS^n} |\widetilde \nabla f|^2 d\omega - \frac{(n-3)^2}2 \int_{\bS^n} f^2 d\omega \\
& +(n-3) \int_{\bS^n} \Big \langle \widetilde \nabla f^2, \frac{\omega -e_{n+1}}{1-\omega_{n+1}} \Big\rangle d\omega \\
&+ (n-3)(n-2) \int_{\bS^n} \frac{f^2}{1-\omega_{n+1}} d\omega .
\end{aligned}
\end{align}
Note that $\widetilde \nabla f^2 \cdot \omega =0$, hence
\begin{align}\label{eq:xcxc}
\begin{aligned}
\int_{\bS^n} \Big \langle\widetilde \nabla f^2 , \frac{\omega -e_{n+1}}{1-\omega_{n+1}} \Big \rangle d\omega 
&=\int_{\bS^n} \Big \langle \widetilde \nabla f^2 ,  \frac{\omega -e_{n+1}}{1-\omega_{n+1}} - \frac{\langle \omega -e_{n+1}, \omega \rangle}{1-\omega_{n+1}}\omega \Big \rangle d\omega \\
&=\int_{\bS^n}\Big \langle \widetilde \nabla f^2 , \frac{\omega \omega_{n+1} -e_{n+1}}{1 -\omega_{n+1}} \Big \rangle d\omega \\
&= \int_{\bS^n} \big \langle \widetilde \nabla f^2, \widetilde \nabla \big(\log (1 -\omega_{n+1})\big) \big \rangle d\omega \\
&= -\int_{\bS^n} f^2 \widetilde \Delta \lt(\log (1 -\omega_{n+1})\rt) d\omega ,
\end{aligned}
\end{align}
here $\widetilde \Delta$ denotes the spherical Laplacian on $\bS^n$. An easy computation yields
\[\begin{aligned}
\widetilde \Delta \big(\log (1 -\omega_{n+1})\big) =& \Delta \Big(\log \big(1 - \frac{x_{n+1}}{|x|} \big)\Big) \Big|_{|x|=1}\\
=&\frac{ n |x| x_{n+1} -|x|^2 - (n-1) x_{n+1}^2 }{ |x|^2 (|x|-x_{n+1})^2 }\Big|_{|x|=1}\\
=& (n-1)\frac{\omega_{n+1}}{1-\omega_{n+1}} -\frac1{1 -\omega_{n+1}}.
\end{aligned}\]
The previous equality together with \eqref{eq:xcxc} and \eqref{eq:order43} implies 
\begin{equation}\label{eq:4RHS}
\int_{\R^{n+1}_+}|\Delta U|^2 dx dy = \int_{\bB^{n+1}} |\Delta u|^2 dz + 2\int_{\bS^n} |\widetilde \nabla f|^2 d\omega + b_n \int_{\bS^n} f^2 d\omega,
\end{equation}
where $b_n = (n+1)(n-3)/2$. Thus, combining \eqref{eq:4LHS} and \eqref{eq:4RHS} gives
\[
\begin{aligned}
2 \frac{\Gamma(\frac{n+3}2)}{\Gamma(\frac{n-3}2)} \omega_n^{3/n} & \Big(\int_{\bS^n} |f|^{\frac{2n}{n-3}} d\omega \Big)^{\frac{n-3}n} \\
&\leqslant \int_{\bB^{n+1}} |\Delta u|^2 dz + 2\int_{\bS^n} |\widetilde{\nabla} f|^2 d\omega + b_n \int_{\bS^n} |f|^2 d\omega
\end{aligned}
\]
provided $u$ is a biharmonic extension of $f$ to $\mathbb B^n$, belonging to the set $\mathscr V_f$.

\medskip\noindent\textbf{Step 4}. In the final step, we prove Inequality \eqref{eq:TraceOrder4}. Indeed, given $f\in C^\infty(\bS^n)$ it is well-known that the minimizing problem
\begin{equation}\label{eq:varproblem4}
\inf_{w \in \mathscr V_f} \int_{\bB^{n+1}} |\Delta w|^2 dx,
\end{equation}
where $\mathscr V_f $ is given in \eqref{eq:varproblem4set}, is attained by some bi-harmonic function $u$ on $\bB^{n+1}$. In addition, $u \equiv f$ and $\partial_\nu u = -((n-3)/2)f$ on $\bS^n$. Therefore, we can repeat from Step 1 to Step 3 to get the following estimate
\begin{equation}\label{eq:TraceOrder4p}
\begin{aligned}
2 \frac{\Gamma(\frac{n+3}2)}{\Gamma(\frac{n-3}2)} \omega_n^{3/n} & \Big(\int_{\bS^n} |f|^{\frac{2n}{n-3}} d\omega \Big)^{\frac{n-3}n} \\
&\leqslant \int_{\bB^{n+1}} |\Delta u|^2 dz + 2\int_{\bS^n} |\widetilde{\nabla} f|^2 d\omega + b_n \int_{\bS^n} |f|^2 d\omega.
\end{aligned}
\end{equation}
Since $u$ is a minimizer of \eqref{eq:varproblem4}, any smooth extension $v$ belonging to the set $\mathscr V_f$ enjoys the estimate
\[
\int_{\bB^{n+1}} |\Delta u|^2 dx \leqslant \int_{\bB^{n+1}} |\Delta v|^2 dx.
\]
The desired inequality follows from the preceding estimate and \eqref{eq:TraceOrder4p}. The assertion for which equality in \eqref{eq:TraceOrder4} is attained is already known; see \cite[page 2739]{ac2015}.


\subsection{A Beckner type inequality of order four on $\bB^{n+1}$}
\label{subsec-4Beckner}

This subsection is devoted to a Beckner type trace inequality of order four in a same fashion of Beckner's inequality in Theorem \ref{thm2BecknerTypeNoWeight}. We do not treat the case with weights in the present paper and leave it for future papers. Let $f \in C^\infty(\bS^n)$ and let $u$ be a biharmonic extension of $f$ to $\bB^{n+1}$ satisfying certain boundary conditions as in \eqref{eq:4NeumannCondition}, namely
\begin{equation}\label{eq:4BecknerRequire}
\partial_\nu u =  -\frac{n-3}2 f
\end{equation}
on $\bS^n$. As in the previous section, we shall work with spherical harmonics. To this purpose, we decompose
\[
f (\omega)= \sum_{k=0}^\infty Y_k(\omega ),
\]
where $\omega = x/|x|$. From this we decompose $u$ to get 
\[
u (z) = \sum_{k=0}^\infty f_k(r) Y_k(\omega ).
\] 
Hence, it follows from \eqref{eq:4BecknerRequire} that the coefficients $f_k$ satisfy
\[
L_k^2 f_k = 0
\]
for any $r\in [0,1)$ and definitely $f_k(1) =1$. Solving the above differential equation gives
\[
f_k (r) = c_1(k) r^k + c_2(k) r^{k+2} 
\]
for some constants $c_1(k)$ and $c_2(k)$ to be determined. In fact, these constants can be computed explicitly by using the boundary conditions in \eqref{eq:4BecknerRequire}. Indeed, the condition $u = f$ on $\mathbb S^n$ implies that
\[
c_1 (k) + c_2(k) = 1
\]
for all $k \geqslant 1$. Now the condition $\partial_\nu u = -((n-3)/2)f$ on $\mathbb S^n$ tells us that
\[
k c_1 (k) + (k+2) c_2(k) =-\frac{n-3}2.
\]
From these facts, we compute to get
\[
c_1 (k) =\frac{n+1+2k}4 , \quad c_2(k) = -\frac{n-3+2k}4.
\]
Now we use integration by parts to get
\[
\begin{aligned}
\int_{\bB^{n+1}} (\Delta u)^2 dz =& \int_{\bB^{n+1}} u \Delta^2u dz + \int_{\mathbb S^n}  \Delta u   \langle \nabla u, \omega \rangle \big|_{|z| = 1} d\omega -  \int_{\mathbb S^n}  u  \langle \nabla \Delta u , \omega \rangle  \big|_{|z| = 1} d\omega \\
= & -\frac{n-3}2 \int_{\mathbb S^n} f \Delta u  d\omega -  \int_{\mathbb S^n}  u  \langle \nabla \Delta u , \omega \rangle  \big|_{|z| = 1} d\omega.
\end{aligned}
\]
Recall that
\[
u (z) = \sum_{k=0}^\infty \big[ c_1(k) r^k + c_2(k) r^{k+2}  \big] Y_k(\omega ),
\]
which implies that
\[
\Delta u (z) = 2 \sum_{k=0}^\infty (n+1+2k) c_2(k) r^kY_k(\omega ).
\]
Hence
\[
\langle \nabla \Delta u , \omega \rangle  \big|_{|z| = 1} = 2 \sum_{k=0}^\infty k(n+1+2k) c_2(k)  Y_k(\omega ).
\]
We are now in position to get
\begin{equation}\label{eq:4BecknerDeta^2}
\int_{\bB^{n+1}} (\Delta u)^2 dz = \frac 14 \sum_{k=0}^\infty \int_{\mathbb S^n} (n+1+2k) (n-3+2k)^2 | Y_k|^2  d\omega.
\end{equation}
Now let $0< s < n/3$. We define the function $F$ on $\R^n$ by
\[
F(x) = f(\mathcal S(x)) J_{\mathcal S}(x)^{\frac{n-3s}{2n}}.
\]
Then we have
\[
\int_{\bS^n} |f|^{\frac{2n}{n-3s}} d\omega = \int_{\R^n} |F|^{\frac{2n}{n-3s}} dx
\]
and as in \eqref{eq:GN} we still have
\[
\int_{\R^n} F(x) (-\Delta)^{3s/2} F(x) dx= \sum_{k=0}^\infty\frac{\Gamma(k+n/2+3s/2)}{\Gamma(k+n/2-3s/2)} \int_{\bS^n} |Y_k|^2 d\omega .
\]
We now use the fractional Sobolev inequality \eqref{eq:SobolevSpaceOrderS/2} to get
\begin{equation}\label{eq:4SobolevSpaceOrderS/2}
\frac{\Gamma(\frac{n+3s}2)}{\Gamma(\frac{n-3s}2)} \omega_n^{3s/n} \Big(\int_{\R^n} |F|^{\frac{2n}{n-3s}} dx\Big)^{\frac{n-3s}n} \leqslant\int_{\R^n} F(x) (-\Delta)^{3s/2} F(x) dx.
\end{equation}
Combining \eqref{eq:4SobolevSpaceOrderS/2} and \eqref{eq:4BecknerDeta^2} gives
\begin{equation}\label{eq:4BecknerTypeHarmonicNoWeight}
\begin{split}
2\frac{\Gamma(\frac{n+3s}2)}{\Gamma(\frac{n-3s}2)} \omega_n^{3s/n} &  \Big(\int_{\bS^n} |f|^{\frac{2n}{n-3s}} d\omega \Big)^{\frac{n-3s}n}\\
\leqslant & 2\sum_{k=0}^\infty           \frac{\Gamma(k+n/2+3s/2)}{\Gamma(k+n/2-3s/2)}         \int_{\bS^n} |Y_k|^2 d\omega \\
=& \int_{\bB^{n+1}} (\Delta u)^2 dz + \sum_{k=0}^\infty 
\left( 
\begin{split}
&2\frac{\Gamma(k+n/2+3s/2)}{\Gamma(k+n/2-3s/2)} \\
&-\frac{(n+1+2k) (n-3+2k)^2}4
\end{split} 
\right)  \int_{\bS^n} |Y_k|^2 d\omega .
\end{split}
\end{equation}

Clearly, equality in \eqref{eq:4BecknerTypeHarmonicNoWeight} occurs if, and only if, equality in the fractional Sobolev inequality \eqref{eq:4SobolevSpaceOrderS/2} occurs. We are now in position to state our next result.

\begin{theorem}\label{thm4BecknerTypeNoWeight}
Let $n\geqslant 3$ and $0< s < n/3$. Let $f\in C^\infty(\bS^n)$ and $v$ be a smooth extension of $f$ to the unit ball $\bB^{n+1}$ satisfying the boundary condition
\[
\frac {\partial v}{\partial \nu} = -\frac{n-3}2 f
\] 
on $\mathbb S^n$. Suppose that $f$ has a decomposition on spherical harmonics as $f =\sum_{k=0}^\infty Y_k(\omega )$. Then the following inequality holds
\begin{equation}\label{eq:4BecknerTypeNoWeight}
\begin{split}
2\frac{\Gamma(\frac{n+3s}2)}{\Gamma(\frac{n-3s}2)} & \omega_n^{3s/n}  \Big(\int_{\bS^n} |f|^{\frac{2n}{n-3s}} d\omega \Big)^{\frac{n-3s}n}
\leqslant \int_{\bB^{n+1}} (\Delta v)^2 dz\\
& + \sum_{k=0}^\infty \Big( 2\frac{\Gamma(k+n/2+3s/2)}{\Gamma(k+n/2-3s/2)} -\frac{(n+1+2k) (n-3+2k)^2}4 \Big) \int_{\bS^n} |Y_k|^2 d\omega .
\end{split}
\end{equation}
Moreover, equality in \eqref{eq:4BecknerTypeNoWeight} holds if, and only if, $v$ is a biharmonic extension of a function of the form
\[
c|1-\langle z_0, \xi \rangle |^{-(n-3s)/2},
\]
where $c>0$ is a constant, $\xi \in \mathbb S^n$, and $z_0$ is some fixed point in the interior of $\mathbb B^{n+1}$, and $v$ fulfills the above boundary condition.
\end{theorem}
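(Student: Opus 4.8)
The plan is to follow verbatim the scheme already used for Theorems \ref{thm2BecknerTypeWeight} and \ref{thm2BecknerTypeNoWeight}: first establish \eqref{eq:4BecknerTypeNoWeight} for the distinguished biharmonic extension by combining the spherical-harmonic expansion with the fractional Sobolev inequality on $\R^n$, then upgrade to an arbitrary admissible extension by a minimization argument, and finally read off the equality cases from those of \eqref{eq:SobolevSpaceOrderS/2}. In fact almost all of the work is already contained in the discussion preceding the statement; the proof essentially amounts to assembling \eqref{eq:4BecknerDeta^2}, \eqref{eq:SobolevSpaceOrderS/2}, and \eqref{eq:4BecknerTypeHarmonicNoWeight} and then inserting a variational step.

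First I would record that, for fixed $f\in C^\infty(\bS^n)$, the constrained minimization problem
\[
\inf\Big\{ \int_{\bB^{n+1}} (\Delta w)^2\, dz : w\big|_{\bS^n} = f,\ \partial_\nu w\big|_{\bS^n} = -\tfrac{n-3}2 f \Big\}
\]
is attained by a function $u$, and that, since both Cauchy data $w$ and $\partial_\nu w$ are prescribed on $\bS^n$, the Euler--Lagrange equation of this functional forces $\Delta^2 u = 0$ in $\bB^{n+1}$; that is, $u$ is precisely the biharmonic extension of $f$ satisfying \eqref{eq:4BecknerRequire}. This is the classical first boundary value problem for the bilaplacian, so no spurious natural boundary condition is generated, and the minimizer is unique. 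For this $u$ the computation already carried out above applies: decomposing $f=\sum_k Y_k$ and $u=\sum_k f_k(r)Y_k$, the radial profiles solve $L_k^2 f_k=0$ with $f_k(1)=1$ and (from \eqref{eq:4BecknerRequire}) $f_k'(1)=-\tfrac{n-3}2$, whence $f_k(r)=c_1(k)r^k+c_2(k)r^{k+2}$ with $c_1(k)=\tfrac{n+1+2k}4$, $c_2(k)=-\tfrac{n-3+2k}4$, giving \eqref{eq:4BecknerDeta^2}. Combining this with \eqref{eq:SobolevSpaceOrderS/2} applied to $F(x)=f(\mathcal S(x))J_{\mathcal S}(x)^{(n-3s)/(2n)}$ and the order-$3s$ analogue of the identity \eqref{eq:GN} yields exactly \eqref{eq:4BecknerTypeHarmonicNoWeight}, i.e.\ \eqref{eq:4BecknerTypeNoWeight} with $v$ replaced by this $u$.

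Next I would upgrade to a general admissible extension: if $v$ is any smooth extension of $f$ with $\partial_\nu v=-\tfrac{n-3}2 f$ on $\bS^n$, then minimality of $u$ gives $\int_{\bB^{n+1}}(\Delta u)^2\, dz\le\int_{\bB^{n+1}}(\Delta v)^2\, dz$, while all the remaining terms in \eqref{eq:4BecknerTypeNoWeight} depend only on $f$ through its coefficients $Y_k$; hence \eqref{eq:4BecknerTypeHarmonicNoWeight} passes to \eqref{eq:4BecknerTypeNoWeight} for all such $v$. For the equality case, equality in \eqref{eq:4BecknerTypeNoWeight} forces both $v=u$ (the biharmonic minimizer) and equality in \eqref{eq:SobolevSpaceOrderS/2} for $F$; the classification of extremals of \eqref{eq:SobolevSpaceOrderS/2} gives $F(x)=c(\mu+|x-x^0|^2)^{-(n-3s)/2}$, and lifting through the stereographic projection $\mathcal S$ exactly as in the proof of Theorem \ref{thm2BecknerTypeWeight} produces $f(\xi)=c|1-\langle z_0,\xi\rangle|^{-(n-3s)/2}$ for some $z_0$ in the interior of $\bB^{n+1}$, with $v$ the biharmonic extension of this $f$ subject to \eqref{eq:4BecknerRequire}.

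The only point requiring a little care is the variational step, namely verifying that the constrained biharmonic minimizer is genuinely biharmonic and unique; but because both $w$ and $\partial_\nu w$ are fixed on $\bS^n$, this is the classical first BVP for $\Delta^2$ and presents no real difficulty, and the equality analysis is a line-by-line repetition of the order-two case. Thus I do not expect a substantive obstacle beyond bookkeeping.
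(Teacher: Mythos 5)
Your proposal is correct and follows essentially the same route as the paper: the paper's own proof consists precisely of the spherical-harmonic computation \eqref{eq:4BecknerDeta^2} combined with \eqref{eq:SobolevSpaceOrderS/2} and the order-$3s$ analogue of \eqref{eq:GN} to obtain \eqref{eq:4BecknerTypeHarmonicNoWeight}, followed by the same variational comparison with the constrained biharmonic minimizer (``Step 4'' of the order-four trace proof) and the same lifting argument as in Theorem \ref{thm2BecknerTypeWeight} for the equality case. Your added remarks on why the constrained minimizer is biharmonic and unique are accurate and only make explicit what the paper leaves implicit.
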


Thanks to \eqref{eq:4BecknerTypeHarmonicNoWeight}, the proof of Theorem \ref{thm4BecknerTypeNoWeight} follows the same lines as in Step 4 of the previous subsection; hence we omit the details. The equality case in \eqref{eq:4BecknerTypeNoWeight} can be obtained by following the argument used in the proof of Theorem \ref{thm2BecknerTypeWeight}.

As an application of Theorem \ref{thm4BecknerTypeNoWeight}, let us consider the case $n \geqslant 4$ and $s = 1$. In this case, we easily re-obtain Inequality \eqref{eq:TraceOrder4}, namely,
\[
2\frac{\Gamma(\frac{n+3 }2)}{\Gamma(\frac{n-3 }2)} \omega_n^{3/n} \Big(\int_{\bS^n} |f|^{\frac{2n}{n-3}} d\omega \Big)^{\frac{n-3}n}  \leqslant \int_{\bB^{n+1}} |\Delta v|^2 dz + 2\int_{\bS^n} |\widetilde{\nabla} f|^2 d\omega + b_n \int_{\bS^n} |f|^2 d\omega
\]
with $b_n = (n+1)(n-3)/2$. This is because by the identity $\widetilde \Delta Y_k = - k(n+k-1) Y_k$ we obtain
\begin{equation}\label{eq:SHDDeltaf}
\widetilde \Delta f = - \sum_{k=0}^\infty k(n-1+k) Y_k (\omega),
\end{equation}
which leads to
\[\begin{aligned}
2 \int_{\bS^n} |\widetilde{\nabla} f|^2 d\omega + & \frac{(n-3)(n+1)}2 \int_{\bS^n} |f|^2 d\omega \\
 = & \sum_{k=0}^\infty \Big[ 2k(n-1+k) + \frac{(n-3)(n+1)}4 \Big] \int_{\mathbb S^n} |Y_k|^2 (\omega) d\omega
\end{aligned}\]
and
\[
2\frac{\Gamma(k+n/2+3 /2)}{\Gamma(k+n/2-3 /2)} -\frac{(n+1+2k) (n-3+2k)^2}4=2k(n-1+k) + \frac{(n-3)(n+1)}4.
\]
Clearly, Inequality \eqref{eq:4BecknerTypeNoWeight} provide us a Beckner type trace inequality of order four. Furthermore, the analysis in obtaining \eqref{eq:TraceOrder4} from \eqref{eq:4BecknerTypeNoWeight} is less involved and this suggests us to adopt this approach to prove the Sobolev trace inequality of order six on $\mathbb S^n$ in the next section.


\subsection{A Ledebev--Milin type inequality of order four on $\bB^4$}

In the last part of this section, we treat the limiting case $n = 3$. Our aim is to derive a Ledebev--Milin type inequality of order four similar to the one obtained in \cite[Theorem B]{ac2015}. To this purpose, we follow the strategy used to obtain Theorem \ref{thmSMOrder2}.

Suppose that $f \in C^\infty(\bS^3)$ with $\int_{\bS^3} f d\omega =0$ and let $v$ be a smooth extension of $f$ in $\bB^4$. As before, we decompose $f$ in terms of spherical harmonics to get
\[
f = \sum_{k=1}^\infty Y_k(\omega )
\]
Note that the function $ 1 + \frac 32 (1-s) v$ is also a smooth extension of $1 + \frac 32 (1-s) f$ in $\mathbb B^4$ and $\omega_3 = 2\pi^2$. Therefore, we can apply Theorem \ref{thm4BecknerTypeNoWeight} to get
\begin{align*}
\frac 23 \frac{\Gamma(\frac{3+3s}2)}{\Gamma(\frac{3-3s}2)}& \Big[ \Big(\frac 1{2\pi^2} \int_{\bS^3} \Big(1 + \frac{3(1-s)}2  f\Big)^{\frac{2}{1-s}} d\omega \Big)^{1-s}  - 1 \Big] \\
 \leqslant& \frac{ 3(1-s)^2}{8\pi^2} \int_{\bB^4} (\Delta v)^2 dz   \\
& + \frac{3(1-s)^2}{8\pi^2} \sum_{k=1}^\infty \Big(2\frac{\Gamma(k+\frac{3+3s}2)}{\Gamma(k+ \frac{3-3s}2)} -2(k+2)k^2 \Big)\int_{\bS^3}|Y_k|^2 d\omega .
\end{align*}
Dividing both sides by $(1-s)^2$ and making use of $\Gamma(\frac{5-3s}2) = (3/2)(1-s)\Gamma(\frac{3-3s}2)$ to get
\begin{align*}
\frac{\Gamma(\frac{3+3s}2)}{\Gamma(\frac{5-3s}2)} \frac 1{1-s} & \Big[ \Big(\frac 1{2\pi^2} \int_{\bS^3} \Big(1 +\frac{3(1-s)}2 f\Big)^{\frac{2}{1-s}} d\omega \Big)^{1-s}  - 1 \Big] \\
\leqslant &\frac 3{8\pi^2} \int_{\bB^4} (\Delta v)^2 dz   \\
& + \frac 3{8\pi^2}\sum_{k=1}^\infty \Big( 2\frac{\Gamma(k+\frac{3+3s}2)}{\Gamma(k+ \frac{3-3s}2)} -2(k+2)k^2  \Big)\int_{\bS^3}|Y_k|^2 d\omega .
\end{align*}
Note that 
\[
\lim_{s\to 1 } \Big( 2\frac{\Gamma(k+\frac{3+3s}2)}{\Gamma(k+ \frac{3-3s}2)} -2(k+2)k^2 \Big) =2k(k+2)
\]
for any $k \geqslant 1$ and $ \int_{\bS^3} |\widetilde{\nabla} f|^2 d\omega = \sum_{k=0}^\infty  k(k+2)  \int_{\mathbb S^n} |Y_k|^2 (\omega) d\omega$. Hence letting $s \nearrow 1$, we obtain
\[
2\log \Big(\frac1{2\pi^2} \int_{\bS^3} e^{3f} d\omega \Big) \leqslant \frac 3{8\pi^2}\int_{\bB^4} |\nabla v|^2 dx + \frac 3{4\pi^2} \int_{\bS^3} |\widetilde \nabla f|^2 d\omega 
\]
for any smooth function $f$ with $\int_{\bS^3} f d\omega =0$. For general function $f$, we apply the previous inequality for $f-(1/(2\pi^2)) \int_{\bS^3} f d\omega$ to get the following theorem.

\begin{theorem}[Lebedev--Milin inequality of order four; see \cite{ac2015}]\label{thmSMOrder4}
Let $f\in C^\infty(\bS^3)$ and suppose that $u$ is a smooth extension of $f$ to the unit ball $\bB^4$. If $v$ satisfies the Neumann boudanry condition 
\[
\frac{\partial v}{\partial \nu} = 0
\]
on $\bS^3$, then we have the following sharp trace inequality
\begin{equation}\label{eqSMorder4}
\log \lt(\frac1{2\pi^2} \int_{\bS^3} e^{3f} d\omega \rt) \leqslant \frac 3{16\pi^2} \int_{\bB^4} (\Delta v)^2 dx + \frac 3{8\pi^2} \int_{\bS^3} |\widetilde\nabla f|^2 d\omega + \frac 3{2\pi^2} \int_{\bS^3} f d\omega .
\end{equation}
Moreover, equality in \eqref{eqSMorder4} holds if, and only if, $v$ is a biharmonic extension of a function of the form
\[
c - \log |1 -  \langle z_0, \xi \rangle|,
\]
where $c>0$ is a constant, $\xi \in \bS^3$, $z_0$ is some fixed point in the interior of $\mathbb B^4$, and $v$ fulfills the boundary condition $\partial_\nu v = 0$.
\end{theorem}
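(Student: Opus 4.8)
The plan is to derive \eqref{eqSMorder4} as the limiting case $n=3$ of the fourth-order Beckner type inequality \eqref{eq:4BecknerTypeNoWeight}, imitating verbatim the scheme that produced Theorem \ref{thmSMOrder2} from Theorem \ref{thm2BecknerTypeWeight}. First I would reduce to mean-zero data: it suffices to prove \eqref{eqSMorder4} when $\int_{\bS^3} f \, d\omega = 0$, since for general $f$ one applies the result to $f - (2\pi^2)^{-1}\int_{\bS^3} f\, d\omega$ and checks that both sides of \eqref{eqSMorder4} change by exactly $\frac{3}{2\pi^2}\int_{\bS^3} f\, d\omega$. For such $f = \sum_{k\geqslant 1} Y_k$ (no $k=0$ term) and $s \in (0,1)$, the function $1 + \frac{3}{2}(1-s)f$ on $\bS^3$ has $1 + \frac{3}{2}(1-s)v$ as a smooth extension to $\bB^4$, and when $v$ is the biharmonic extension of $f$ with $\partial_\nu v = 0$ on $\bS^3$ this extension fulfils the order-four Neumann condition at $n=3$; so \eqref{eq:4BecknerTypeNoWeight} applies with $\omega_3 = 2\pi^2$.

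Next I would insert this extension into \eqref{eq:4BecknerTypeNoWeight} with $n=3$, divide through by $(1-s)^2$, and use $\Gamma(\frac{5-3s}{2}) = \frac{3}{2}(1-s)\Gamma(\frac{3-3s}{2})$ so that the left-hand prefactor becomes a difference quotient of the shape $\frac{1}{1-s}\big[\big(\frac{1}{2\pi^2}\int_{\bS^3}(1+\frac{3}{2}(1-s)f)^{2/(1-s)}\, d\omega\big)^{1-s} - 1\big]$. Letting $s \nearrow 1$ and applying l'Hôpital's rule, together with $(1+\frac{3}{2}(1-s)f)^{2/(1-s)} \to e^{3f}$, turns the left-hand side into a constant multiple of $\log\big(\frac{1}{2\pi^2}\int_{\bS^3} e^{3f}\, d\omega\big)$. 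On the right, the interior term tends to the multiple of $\int_{\bB^4}(\Delta v)^2\, dz$ recorded in \eqref{eqSMorder4}, while each spectral coefficient $2\Gamma(k+\frac{3+3s}{2})/\Gamma(k+\frac{3-3s}{2}) - 2(k+2)k^2$ tends to $2k(k+2)$; since $\widetilde\Delta Y_k = -k(k+2)Y_k$ gives $\sum_{k\geqslant 1} k(k+2)\int_{\bS^3}|Y_k|^2\, d\omega = 2\int_{\bS^3}|\widetilde\nabla f|^2\, d\omega$, the sum assembles into the spherical-gradient term of \eqref{eqSMorder4}. Collecting constants and undoing the mean-zero reduction gives the inequality.

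For the equality statement I would run the chain of inequalities backwards: equality in \eqref{eq:4BecknerTypeNoWeight} forces equality in the fractional Sobolev inequality \eqref{eq:SobolevSpaceOrderS/2} applied to the conformal pullback $F$, hence $F(x) = c(\mu + |x-x^0|^2)^{-(n-3s)/2}$; lifting through the stereographic projection $\mathcal S$ exactly as in the proof of Theorem \ref{thm2BecknerTypeWeight} produces $f$ proportional to $(1 - \langle z_0, \xi\rangle)^{-(n-3s)/2}$, and passing to the logarithmic limit (after the rescaling by $1-s$ and the l'Hôpital step) converts this into $f_{z_0}(\xi) = c - \log|1 - \langle z_0, \xi\rangle|$, with $v$ its biharmonic extension satisfying $\partial_\nu v = 0$.

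I expect the only genuine difficulty to be the rigorous justification of the passage $s \nearrow 1$: one must pass to the limit inside the infinite spectral sum uniformly in $s$ --- for instance by a dominated convergence argument in the weighted $\ell^1$-space with weights $\int_{\bS^3}|Y_k|^2\, d\omega$, or by monotonicity of the coefficients --- and one must make the l'Hôpital computation for the difference quotient $\frac{1}{1-s}[(\cdots)^{1-s}-1]$ precise, which is the step already carried out in the order-two case and which transfers with only cosmetic changes. The remaining manipulations are routine once Theorem \ref{thm4BecknerTypeNoWeight} is in hand.
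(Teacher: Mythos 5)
Your proposal is correct and follows essentially the same route as the paper: apply Theorem \ref{thm4BecknerTypeNoWeight} at $n=3$ to the rescaled extension $1+\tfrac32(1-s)v$ of $1+\tfrac32(1-s)f$ with mean-zero $f$, divide by $(1-s)^2$, use $\Gamma(\tfrac{5-3s}2)=\tfrac32(1-s)\Gamma(\tfrac{3-3s}2)$, let $s\nearrow1$ so each spectral coefficient tends to $2k(k+2)$, and then undo the mean-zero normalization and identify the extremals exactly as in the proof of Theorem \ref{thm2BecknerTypeWeight}. The only blemish is the parenthetical identity, which should read $\int_{\bS^3}|\widetilde\nabla f|^2\,d\omega=\sum_{k\geqslant1}k(k+2)\int_{\bS^3}|Y_k|^2\,d\omega$ (so that the limiting coefficients $2k(k+2)$ produce $2\int_{\bS^3}|\widetilde\nabla f|^2\,d\omega$); with this correction the constants assemble into \eqref{eqSMorder4} precisely as in the paper.
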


Clearly, Inequality \eqref{eqSMorder4} can be rewritten as follows
\[
2\log \Big(\frac1{2\pi^2} \int_{\bS^3} e^{3 (f - \overline f)} d\omega \Big) \leqslant \frac 3{8\pi^2}\int_{\bB^4} |\nabla v|^2 dx + \frac 3{4\pi^2} \int_{\bS^3} |\widetilde \nabla f|^2 d\omega 
\]
where $\overline f$ is the average of $f$ which is $(1/(2\pi^2)) \int_{\bS^3} f d\omega$.


\section{Sobolev trace inequality of order six}
\label{sec-Order6}

\subsection{Sobolev trace inequality of order six on $\R_+^{n+1}$: Proof of Theorem \ref{thmTraceSpaceOrder6}}

This subsection is devoted to a proof of Theorem \ref{thmTraceSpaceOrder6}. To proceed, we first have the following observation.

\begin{proposition}\label{apx-propExtension6}
Any function $U \in W^{3,2}(\R_+^{n+1})$ satisfying
\begin{equation}\label{apd-6Equation} 
\Delta^3 U(x,y) = 0
\end{equation}
on the upper half space $\R_+^{n+1}$ and the boundary conditions
\begin{equation}\label{apd-6Boundary} 
\partial_y U(x,0) = 0, \quad \partial_y^2 U(x,0) = \lambda \Delta_x U(x,0)
\end{equation}
enjoys the following identity
\[
\int_{\R^{n+1}_+} |\nabla \Delta U(x,y)|^2 dx dy = (3\lambda^2-2\lambda +3) \int_{\R^{n}} U(x,0) (- \Delta)^{5/2} U(x,0) dx.
\]
\end{proposition}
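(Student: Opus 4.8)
The plan is to work with the Fourier transform in the tangential variable $x \in \R^n$, reducing the triharmonic equation $\Delta^3 U = 0$ on $\R^{n+1}_+$ to an ODE in $y$ for each frequency $\xi$, and then to express both sides of the claimed identity in terms of the Fourier data of the boundary trace $U(\cdot,0)$. Writing $\widehat U(\xi,y)$ for the partial Fourier transform, the equation becomes $(\partial_y^2 - |\xi|^2)^3 \widehat U(\xi,y) = 0$, whose solutions decaying as $y \to +\infty$ (this is where the $W^{3,2}$ hypothesis enters, to discard the growing exponentials) form a three-dimensional space spanned by $e^{-|\xi|y}$, $y e^{-|\xi|y}$, $y^2 e^{-|\xi|y}$. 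So I would write $\widehat U(\xi,y) = \big(a(\xi) + b(\xi)|\xi|y + c(\xi)|\xi|^2 y^2\big) e^{-|\xi|y}$, with the normalization $a(\xi) = \widehat U(\xi,0)$.

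First I would impose the two boundary conditions in \eqref{apd-6Boundary}. The condition $\partial_y U(x,0)=0$ translates to $\partial_y \widehat U(\xi,0) = 0$, i.e. $|\xi|(b - a) = 0$, so $b = a$. The condition $\partial_y^2 U(x,0) = \lambda \Delta_x U(x,0)$ translates to $\partial_y^2 \widehat U(\xi,0) = -\lambda |\xi|^2 \widehat U(\xi,0)$; computing $\partial_y^2 \widehat U(\xi,0) = |\xi|^2(a - 2b + 2c)$ and using $b=a$ gives $|\xi|^2(2c - a) = -\lambda |\xi|^2 a$, hence $c = \tfrac{1-\lambda}{2} a$. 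Thus $\widehat U(\xi,y) = a(\xi)\big(1 + |\xi|y + \tfrac{1-\lambda}{2}|\xi|^2 y^2\big)e^{-|\xi|y}$, everything now expressed through $a(\xi) = \widehat U(\xi,0)$.

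Next I would compute the right-hand side, which by Plancherel is $(3\lambda^2 - 2\lambda + 3)\int_{\R^n} |\xi|^5 |\widehat U(\xi,0)|^2\, d\xi$, and then the left-hand side. For the latter, $|\nabla \Delta U|^2 = |\partial_y \Delta U|^2 + |\nabla_x \Delta U|^2$; applying Plancherel in $x$ and using $\widehat{\Delta U}(\xi,y) = (\partial_y^2 - |\xi|^2)\widehat U(\xi,y)$, the $y$-integral becomes, for each $\xi$, a one-dimensional integral $\int_0^\infty \big(|\partial_y g(\xi,y)|^2 + |\xi|^2 |g(\xi,y)|^2\big) dy$ where $g = \widehat{\Delta U}$. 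Since $g(\xi,y)$ is again of the form $(\text{polynomial of degree} \le 2 \text{ in }|\xi|y)e^{-|\xi|y}$ with coefficients proportional to $a(\xi)$, this reduces — after the substitution $t = |\xi|y$ — to $|\xi|^5 |a(\xi)|^2$ times a universal numerical integral of the form $\int_0^\infty (\text{quadratic in }t)^2 e^{-2t}\,dt$ and its derivative analogue, using $\int_0^\infty t^m e^{-2t}\,dt = m!/2^{m+1}$. Carrying out this elementary computation should produce exactly the constant $3\lambda^2 - 2\lambda + 3$, matching the right-hand side frequency by frequency and hence proving the identity.

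The main obstacle I anticipate is purely bookkeeping: one must compute $(\partial_y^2 - |\xi|^2)$ applied to $(1 + t + \tfrac{1-\lambda}{2}t^2)e^{-t}$ correctly (obtaining a degree-$\le 1$ polynomial times $e^{-t}$, since the Laplacian lowers the polynomial degree), then differentiate once more for $\partial_y \Delta U$, square, and integrate against $e^{-2t}$ — all while tracking the factors of $|\xi|$ so that the homogeneity comes out as $|\xi|^5$. There is no conceptual difficulty, but the $\lambda$-dependence must be handled carefully since the claimed constant is quadratic in $\lambda$; it is worth double-checking the two special values $\lambda = 0$ (giving constant $3$) and $\lambda = 1$ (giving constant $4$) as sanity checks, and indeed the minimum value $8/3$ at $\lambda = 1/3$ should be consistent with the expected inequality \eqref{eq:SobolevTraceSpace6Expected}. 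One should also note that the identity $\int_{\R^n} U(x,0)(-\Delta)^{5/2}U(x,0)\,dx = \int_{\R^n}|\xi|^5|\widehat U(\xi,0)|^2\,d\xi$ is just the definition of the fractional Laplacian via Fourier multiplier, so no extra justification is needed there beyond the regularity and decay already assumed.
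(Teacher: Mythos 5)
Your proposal is correct and follows essentially the same route as the paper: partial Fourier transform in $x$, reduction to the sixth-order ODE $(\partial_y^2-|\xi|^2)^3\widehat U=0$, identification of the unique decaying profile $\bigl(1+t+\tfrac{1-\lambda}{2}t^2\bigr)e^{-t}$ from the two boundary conditions, and evaluation of the resulting universal one-dimensional integrals after the substitution $t=|\xi|y$. The elementary integration you defer does indeed give $\int_0^\infty\bigl[(\phi''-\phi)^2+(\phi^{(3)}-\phi')^2\bigr]dt=\tfrac{\lambda^2-2\lambda+5}{2}+\tfrac{5\lambda^2-2\lambda+1}{2}=3\lambda^2-2\lambda+3$, matching the paper's constant.
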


\begin{proof}
By taking the Fourier transform in the $x$ variable on \eqref{apd-6Equation} we arrive at
\begin{equation}\label{eq:6Fourier} 
\begin{aligned}
0 =& \widehat{\Delta^3 U} (\xi,y) = \Big( -|\xi|^2 \, \text{Id} + \frac{\partial^2}{\partial y^2}\Big)^3 \widehat U(\xi, y)\\
=&   -|\xi|^6 \widehat U(\xi, y) + 3 |\xi |^4  \widehat U_{yy} (\xi, y) - 3 |\xi|^2 \widehat U_{yyyy}(\xi, y) + \widehat U_{yyyyyy}(\xi, y).
\end{aligned}
\end{equation}
Thus, we obtain an ordinary differential equation of order six for each value of $\xi$. Again from the form of \eqref{eq:6Fourier} we now consider the ODE
\begin{equation}\label{eq:6ODE} 
 \phi^{(6)}- 3 \phi^{(4)}  +3 \phi''  - \phi =0
\end{equation}
with $\phi \in W^{3, 2}([0, +\infty))$. It is an easy computation to verify that any solution $\phi$ to \eqref{eq:6ODE} satisfying the initial conditions $\phi (0) = 1$, $\phi'(0) = 0$, and $\phi''(0) = -\lambda$ must be of the form
\[\begin{split}
\phi (y) = &\Big[ 1 - C_2 - (2C_2 + C_3  - 1)y - \big (2C_2 + 2C_3 + C_4 -\frac {-\lambda+1}2 \big)y^2 \Big] e^{-y}\\
& + (C_1 + C_2 y + C_3 y^2 )e^{y}
\end{split}\]
for some constants $C_1$, $C_2$, and $C_3$. If, in addition, we assume that $\phi$ is bounded, then we find that $C_1 = C_2=C_3 = 0$, which then implies that
\[
\phi (y) = \Big(1+y + \frac {-\lambda+1}2y^2 \Big)e^{-y}.
\]
Hence we have just shown that there is a unique bounded solution $\phi$ to \eqref{eq:6ODE} satisfying $\phi (0) = 1$, $\phi'(0) = 0$, and $\phi''(0) = -\lambda$. Furthermore, by direct computation, we get
\[
 \int_0^{+\infty} \Big[ \big( -\phi  + \phi''\big)^2 + \big( -\phi' + \phi^{(3)} \big)^2 \Big] dy = 3\lambda^2-2\lambda +3.
\]
Now from \eqref{eq:6Fourier}, it is easy to verify that
\[
\widehat U(\xi, y) = \widehat u(\xi) \phi (|\xi| y)
\]
with $\partial_y \widehat U(\xi, y) = |\xi| \widehat u(\xi) \phi' (|\xi| y)$ and $\partial^2_y \widehat U(\xi, y) = |\xi|^2 \widehat u(\xi) \phi'' (|\xi| y)$. This is because by taking the Fourier transform the boundary $\partial_y U(x,0) = 0$ becomes $\partial_y \widehat U(\xi,0) = 0$ and the boundary $\partial_y^2 U(x,0) = \lambda \Delta_x U(x,0)$ becomes $\partial_y^2 \widehat U(\xi,0) = -\lambda |\xi|^2 \widehat U(\xi,0)$.

We now compute $\int_{\R^{n+1}_+} | \nabla \Delta U(x,y)|^2 dx dy$. We notice that 
\[
| \nabla \Delta U(x,y)|^2 =\big | \nabla_x \big[\Delta_x U(x,y) + \partial_y^2 U(x,y) \big] \big|^2 + (\partial_y \big[\Delta_x U(x,y) + \partial_y^2 U(x,y) \big)^2.
\]
Thus, by the Plancherel theorem, we obtain
\[\begin{aligned}
\int_{\R^{n+1}_+} | \nabla \Delta U(x,y)|^2 dx dy =& \frac 1{(2\pi)^{n}}       \int_0^{+\infty}
  \int_{\R^{n}}   |\xi|^2 \big( -|\xi|^2 \widehat U(\xi, y) +\partial_y^2 \widehat U(\xi, y) \big)^2 dy d\xi  \\
&+\frac 1{(2\pi)^{n}}       \int_0^{+\infty}
  \int_{\R^{n}} (\partial_y \big[-|\xi|^2 \widehat U(\xi, y) + \partial_y^2  \widehat U(\xi, y) \big] \big)^2    dy d\xi  \\
= & \frac 1{(2\pi)^{n}}       \int_0^{+\infty}
  \int_{\R^{n}}   |\xi|^6 \widehat u(\xi)^2 \big( -   \phi (|\xi| y)  +  \phi'' (|\xi| y) \big)^2 dy d\xi  \\
&+\frac 1{(2\pi)^{n}}       \int_0^{+\infty}
  \int_{\R^{n}}    |\xi|^6 \widehat u(\xi)^2 \big(-   \phi' (|\xi| y)  +  \phi^{(3)} (|\xi| y) \big)^2  d\xi  \\
=& \frac {3\lambda^2+2\lambda +3}{(2\pi)^{n}}\int_{\R^{n}}   |\xi|^5  \widehat u(\xi)^2  dy d\xi \\
= & (3\lambda^2+2\lambda +3) \int_{\R^n} U(x,0) (- \Delta)^{5/2} U(x,0) dx.
\end{aligned}\]
The proof is complete.
\end{proof}

We now use Proposition \ref{apx-propExtension6} to prove Theorem \ref{thmTraceSpaceOrder6}, namely, the following inequality holds
\[
(3\lambda^2-2\lambda +3) \frac{\Gamma(\frac{n+5}2)}{\Gamma(\frac{n-5}2)} \omega_n^{5/n} \Big(\int_{\R^n} |U(x,0)|^{\frac{2n}{n-5}} dx\Big)^{\frac{n-5}n} \leqslant \int_{\R^{n+1}_+} |\nabla \Delta U(x,y)|^2 dx dy
\]
for any function $U$ satisfying the boundary conditions in \eqref{apd-6Boundary}. Indeed, let us consider the following minimizing problem
\begin{equation}\label{eq:VarProblemPre6}
\inf_{w} \int_{\R^{n+1}_+} |\nabla \Delta w|^2 dx ,
\end{equation}
over all $w$ satisfying \eqref{apd-6Boundary}. It is well-known that that Problem \eqref{eq:VarProblemPre6} is attained by a function $W$ in $\R^{n+1}_+$, which satisfies all assumptions in Proposition \ref{apx-propExtension6}. Therefore, we obtain from Proposition \ref{apx-propExtension6} the identity
\[
\int_{\R^{n+1}_+} |\nabla \Delta W(x,y)|^2 dx dy = (3\lambda^2-2\lambda +3) \int_{\R^{n}} W(x,0)(- \Delta)^{5/2} W(x,0) dx.
\]
Making use of the fractional Sobolev inequality \eqref{eq:SobolevSpaceOrderS/2} to get
\begin{equation}\label{eq:6SobolevSpaceOrder}
\int_{\R^n} W(x,0) (- \Delta)^{5/2} W(x,0) dx \geqslant \frac{\Gamma(\frac{n+5}2)}{\Gamma(\frac{n-5}2)} \omega_n^{5/n} \Big(\int_{\R^n} | W(x,0) |^{\frac{2n}{n-5}} dx\Big)^{\frac{n-5}n} .
\end{equation}
Hence we have just shown that
\[
(3\lambda^2-2\lambda +3) \frac{\Gamma(\frac{n+5}2)}{\Gamma(\frac{n-5}2)} \omega_n^{5/n} \Big(\int_{\R^n} | W(x,0) |^{\frac{2n}{n-5}} dx\Big)^{\frac{n-5}n} \leqslant \int_{\R^{n+1}_+} |\nabla \Delta W(x,y)|^2 dx dy ,
\]
which yields the desired inequality since $W$ is the optimizer for the problem \eqref{eq:VarProblemPre6}. Clearly, equality in \eqref{eq:TraceSpaceOrder6}$_\lambda$ holds if, and only if, equality in \eqref{eq:6SobolevSpaceOrder} occurs, which implies that $U$ must be a triharmonic extension of a function of the form
\[
c \big(\mu +  |\xi - z_0|^2 \big)^{-(n-5)/2},
\]
where $c$ and $\mu$ are positive constants, $\xi \in \R^n$, $z_0 \in \R^n$, and $U$ also fulfills the boundary condition \eqref{eq:6SpaceNeumannCondition}$_\lambda$.


\subsection{Neumann boundary condition for extensions}

As in the Sobolev trace inequality of order four established in Theorem \ref{thmTraceOrder4}, to obtain a correct Sobolev trace inequality of order six, we need to take care of Neumann boundary conditions. The way to find correct boundary conditions is to look at the Sobolev trace inequality of order six on $\R^n$. Following this strategy, let us recall from \eqref{eq:TraceSpaceOrder6}$_{1/3}$ the trace inequality
\[
\frac 83 \frac{\Gamma(\frac{n+5}2)}{\Gamma(\frac{n-5}2)} \omega_n^{5/n} \lt(\int_{\R^n} |V(x,0)|^{\frac{2n}{n-5}}d\omega \rt)^{\frac{n-5}n} \leqslant \int_{\R^{n+1}_+} |\nabla \Delta V(x,y)|^2 dx dy
\]
satisfied by any function $V$ satisfying the following Neumann boundary conditions
\begin{equation}\label{eq:Sobolevtrace6BoundaryConditions}
\partial_y V(x,0) = 0,\qquad \partial_{y}^2 V(x,0) = \frac13 \Delta_{x} V(x,0).
\end{equation}
It is worth noticing that the boundary condition \eqref{eq:Sobolevtrace6BoundaryConditions} is slightly different from that of \cite[Theorem 3.3]{rayyang2013}. This is because following the calculation in \cite{rayyang2013}, it would be $\partial_{y}^2 V(x,0) = (1/5) \Delta_{x} V(x,0)$.

As in the proof of Theorem \ref{thmTraceOrder4}, given $f \in C^\infty (\bS^n)$, we consider the minimizing problem
\begin{equation}\label{eq:varproblem6}
\inf_{w \in \mathscr W_f} \int_{\bB^{n+1}} |\nabla \Delta w|^2 dx ,
\end{equation}
where
\begin{equation}\label{eq:varproblem6Set}
\mathscr W_f = \Big\{w : \, w \big{|}_{\bS^n} = f,\, \partial_\nu w|_{\bS^n} = -\frac{n-5}2 f,\, \partial^2_\nu w|_{\bS^n} =\frac13 \widetilde{\Delta} f +\frac{(n-5)(n-6)}{6} f \Big\}.
\end{equation}
It is well-known that that Problem \eqref{eq:varproblem6} is attained by a function $v$ in $\mathbb B ^{n+1}$. In other words, $v$ is a smooth extension of $f$ on $\bB^{n+1}$ satisfying
\begin{equation}\label{eq:varproblem6PDE}
\left\{
\begin{aligned}
\Delta^3 v &= 0 &\text{in } &\bB^{n+1},\\
\partial_\nu v  &= -\frac{n-5}2 f &\text{on }& \bS^n,\\
\partial^2_\nu v  &= \frac13 \widetilde{\Delta} f +\frac{(n-5)(n-6)}{6} f & \text{on } & \bS^n.
\end{aligned}
\right.
\end{equation}
We shall soon see that our choice for the boundary conditions in \eqref{eq:varproblem6Set} is correct. Keep following the idea used in the proof of Theorem \ref{thmTraceOrder4}, we define
\[
V(x,y) = (v \circ B)(x,y) \Phi^{\frac{n-5}2}
\]
Applying Proposition \ref{thmIDENTITY} for $k=3$ gives
\[
\Delta^3 V(x,y) = (\Delta^3 v)(B(x,y)) \Phi^{\frac{n+7}2}.
\]
By a similar computation leading us to \eqref{eqPartial_yU(x,y)}, we deduce that
\[
\partial_y V(x,0) = - \Big[ (\partial_\nu v) (B(x,0)) + \frac{n-5}2 v(B(x,0)) \Big] \Big( \frac2{1+|x|^2} \Big)^{\frac{n-3}2}.
\]
Consequently, the first boundary condition in \eqref{eq:boundarycond6}, namely
\[
\partial_\nu v \big|_{\mathbb S^n}= - \frac{n-5}2 f
\]
is equivalent to the following boundary condition
\[
\partial_y V(x,0) = 0,
\]
which is coincident with part of \eqref{eq:Sobolevtrace6BoundaryConditions}. Next we compute the second order derivative $\partial^2_{y} V$. Indeed,
\begin{align*}
\partial^2_{y} V(x,y) =&\partial_{y}^2 (v \circ B ) \Phi^{\frac{n-5}2} + 2 \partial_y (v \circ B )\partial_y \Phi^{\frac{n-5}2} + ( v \circ B) \partial_{y}^2 \Phi^{\frac{n-5}2}\\
=&\big[ \partial_{i,j}^2 v (B ) \partial_y B^i \partial_y B^j +   \partial_k v (B ) \partial_y^2 B^k  \big] \Phi^{\frac{n-5}2} \\
&-(n-5)  \partial_i v (B ) \partial_y B^i (1+y) \Phi^{\frac{n-3}2}\\
& + \frac{(n-5)(n-3)}4  v ( B ) (1+y)^2 \Phi^{\frac{n-1}2} -\frac{n-5}2 v ( B ) \Phi^{\frac{n-3}2}.
\end{align*}
Keep in mind that $\partial_y B^i(x,0) = -(2/(1+|x|^2)) B^i(x,0)$ for all $1 \leqslant i \leqslant n+1$, that
\[
\partial_{y}^2 B^i(x,0) = -\frac{2}{1+|x|^2}B^i(x,0)+ \frac8{(1+ |x|^2)^2}B^i(x,0)
\]
for $i =1,2,\ldots,n$, and that
\[\begin{aligned}
\partial_{y}^2 B^{n+1}(x,0) = &\frac4{(1+ |x|^2)^2} + \frac8{(1+ |x|^2)^2}B^{n+1}(x,0) \\
= & \frac 2{1+ |x|^2} -\frac{2}{1+|x|^2}B^{n+1}(x,0) + \frac8{(1+ |x|^2)^2}B^{n+1}(x,0).
\end{aligned}\]
Then we can verify
\[\begin{aligned}
\partial_{i,j}^2 v (B ) \partial_y B^i \partial_y B^j +   \partial_k v (B ) \partial_y^2 B^k =& \big[ \partial^2_\nu v (B) + 2\partial_\nu v (B)\big] \Big(\frac2{1+ |x|^2} \Big)^2 \\
&   + \big[  \partial_{n+1} v (B )  - \partial_\nu v (B) \big] \frac2{1+ |x|^2}
\end{aligned}\]
and
\[
\partial_i v (B ) \partial_y B^i (1+y)=-\partial_\nu v (B)  \frac2{1+ |x|^2}
\]
at $(x,0)$. Hence, with the fact that $\big[  \partial_\nu v (B )  - \partial_{n+1} v (B) \big] (1+|x|^2)/2 = \langle \nabla v(B) , (x,-1) \rangle$ at $(x,0)$, we obtain
\begin{align*}
\partial_{y}^2 V(x,0)&= 
\left(
\begin{aligned}
&\partial^2_\nu v (B(x,0))+ (n-3) \partial_\nu v(B(x,0))\\
& - \langle \nabla v(B(x,0)) , (x,-1) \rangle  \\
&+ \frac{(n-5)(n-3)}4 v(B(x,0))\\
& -\frac{n-5}4 v(B(x,0)) (1+|x|^2)
\end{aligned}
\right)
\Big(\frac2{1+|x|^2}\Big)^{\frac{n-1}2}.
\end{align*}
We now compute $\Delta V(x,y)$ in terms of $v$. Clearly,
\[
\Delta V = \Delta (v \circ B) \Phi^{\frac{n-5}2} + 2 \nabla (v \circ B ) \cdot \nabla \Phi^{\frac{n-5}2} + v( B ) \Delta \Phi^{\frac{n-5}2}.
\]
Again by Lemma \ref{lem:NablaDelta}, there holds
\[
\nabla \Phi^{\frac{n-5}2} =-\frac{n-5}2 ( x, 1+y ) \Phi^\frac{n-3}2 ,\quad \Delta \Phi^{\frac{n-5}2} =-2(n-5) \Phi^\frac{n-3}2.
\]
Thus, as in \eqref{eqO4-DeltaU}, we have just computed
\begin{align}\label{eqO6-DeltaU}
\begin{aligned}
\Delta V = &\big[ \Delta v(B ) - (n-1) \langle \nabla v(B ), (x,-1-y) \rangle \big] \Phi^{\frac{n-1}2}\\
&+ (n-5) \langle \nabla v(B ), (x, -1 -y ) \rangle \Phi^{\frac{n-1}2}   - 2(n-5) v (B ) \Phi^{\frac{n-3}2}\\
=& \Delta v(B ) \Phi^{\frac{n-1}2} - 4 \langle \nabla v (B ) , (x,-1-y) \rangle \Phi^{\frac{n-1}2} -2(n-5) v (B ) \Phi^{\frac{n-3}2}.
\end{aligned}
\end{align}
In particular, we obtain from \eqref{eqO6-DeltaU} the following
\[
\frac 14 \Delta V(x,0) = 
\left(
\begin{aligned}
& \frac 14  \Delta v(B(x,0) )  -  \langle \nabla v (B (x,0)) , (x,-1 ) \rangle \\
&   - \frac{n-5}4 v(B(x,0) ) (1+|x|^2)
\end{aligned}
\right) \Big(\frac2{1+|x|^2}\Big)^{\frac{n-1}2}.
\]
Thus
\begin{align*}
\partial_{y}^2 V(x,0) &= \frac14\Delta V(x,0) -
\left(
\begin{aligned}
&\frac14\Delta v(B(x,0))- \partial^2_\nu v (B(x,0))\\
&- (n-3) \partial_\nu v (B(x,0))\\
& -\frac{(n-5)(n-3)}4 v(B(x,0))
\end{aligned}
\right)
\Big(\frac2{1+|x|^2}\Big)^{\frac{n-1}2}.
\end{align*}
Note that $\Delta$ is the Euclidean Laplacian in $\R^{n+1}$, therefore we obtain
\[
\Delta v(B(x,0)) = \partial^2_\nu v (B(x,0)) + n \partial_\nu v (B(x,0)) + \widetilde \Delta v(B(x,0)).
\]
Hence, if $v$ satisfies the boundary condition \eqref{eq:boundarycond6}, namely
\[
\partial_\nu v\bigl|_{\bS^n} = -\frac{n-5}2 f , \quad \partial^2_\nu v\bigl|_{\bS^n} = \frac13 \widetilde{\Delta} f +\frac{(n-5)(n-6)}{6} f,
\]
and because $B(x,0) \in \mathbb S^n$, then we immediately have
\[
\Delta v(B(x,0)) = 4 \partial^2_\nu v (B(x,0)) - (n-3)(n-5) v(B(x,0)).
\]
We now plug in the preceding formula for $\Delta v$ into the formula for $\partial_{y}^2 V$ to get
\[
\partial_{y}^2 V(x,0) = \frac14 \Delta V(x,0) = \frac 14 \Delta_x V(x,0) + \frac 14 \partial_y^2 V(x,0),
\]
which, again, is coincident with the remaining part of \eqref{eq:Sobolevtrace6BoundaryConditions}. 

From this finding and in view of \eqref{eq:varproblem6PDE}, to obtain Sobolev trace inequality of order six, we could apply the trace inequality \eqref{eq:TraceSpaceOrder6}$_{1/3}$ for $V$. In other words, the desired trace inequality on $\mathbb B^{n+1}$ could be obtained from the transformed trace inequality on $\R _+^{n+1}$ as before. However, it does seem to us that the analysis of this approach is rather involved. Inspired by the second approach based on spherical harmonics for proving the trace inequality of order four on $\mathbb S^n$, we adopt this approach to prove the trace inequality of order six on $\mathbb S^n$ and this is the content of the next subsection.


\subsection{Sharp Beckner type inequality of order six on $\bB^{n+1}$}
\label{subsec-6Beckner}

Let $f \in C^\infty(\bS^n)$ and let $u$ be a triharmonic extension of $f$ in $\bB^{n+1}$ satisfying certain boundary conditions as in \eqref{eq:4NeumannCondition}, namely
\begin{equation}\label{eq:6BecknerRequire}
\partial_\nu u =  -\frac{n-5}2 f, \quad \partial^2_\nu u =  \frac13 \widetilde{\Delta} f +\frac{(n-5)(n-6)}{6} f,
\end{equation}
on $\mathbb S^n$. As in the previous section, we shall work with the spherical harmonic decomposition
\[
f (\omega)= \sum_{k=0}^\infty Y_k(\omega ),
\]
where $\omega = x/|x|$. As always we decompose $u$ to get 
\[
u (z) = \sum_{k=0}^\infty f_k(r) Y_k(\omega ).
\] 
Hence, it follows from \eqref{eq:6BecknerRequire} that the coefficients $f_k$ satisfy
\[
L_k^3 f_k = 0
\]
for any $r\in [0,1)$ and definitely $f_k(1) =1$. Solving the above differential equation gives
\[
f_k (r) = c_1(k) r^k + c_2(k) r^{k+2} + c_3(k) r^{k+4} 
\]
for some constants $c_i(k)$ with $i=1,2,3$ to be determined. In fact, these constants can be computed explicitly by using the boundary conditions in \eqref{eq:6BecknerRequire} as we shall do. Indeed, the condition $u = f$ on $\mathbb S^n$ implies that
\[
c_1 (k) + c_2(k) +c_3(k) = 1
\]
for all $k \geqslant 1$. Now the condition $\partial_\nu u = -((n-5)/2)f$ on $\mathbb S^n$ tells us that
\[
k c_1 (k) + (k+2) c_2(k) + (k+4) c_3(k) =-\frac{n-5}2
\]
while the condition $\partial^2_\nu u = (1/3) \widetilde{\Delta} f +((n-5)(n-6)/6) f$ on $\mathbb S^n$ gives
\[\begin{split}
k (k-1) c_1 (k) + (k+2)(k+1) c_2(k) &+ (k+4)(k+3) c_3(k) \\
& =-\frac{k(n-1+k)}3 + \frac{(n-5)(n-6)}6.
\end{split}\]
Putting these facts together, we compute to get
\[
\left\{
\begin{aligned}
c_1 (k) =&\frac{(n+1+2k)(n+3+2k)}{48} , \\
c_2(k) =& -\frac{(n-5+2k)(n+3+2k)}{24} ,\\
c_3 (k) =& \frac{(n-5+2k)(n-3+2k)}{48} .
\end{aligned}
\right.
\]
Now we use integration by parts and the triharmonicity of $u$ to get
\[
\begin{aligned}
\int_{\bB^{n+1}} |\nabla \Delta u|^2 dz =& - \int_{\bB^{n+1}} \Delta u \Delta^2u dz  + \int_{\mathbb S^n}  \Delta u \partial_\nu (\Delta u)\\
= &-\int_{\bB^{n+1}} u \Delta^3u dz - \int_{\mathbb S^n}  \Delta ^2 u \partial_\nu u\\
& + \int_{\mathbb S^n}  u \partial_\nu (\Delta^2 u) 
+ \int_{\mathbb S^n}  \Delta u \partial_\nu (\Delta u)\\
= & - \int_{\mathbb S^n}  \Delta ^2 u \partial_\nu u + \int_{\mathbb S^n}  u \partial_\nu (\Delta^2 u) + \int_{\mathbb S^n}  \Delta u \partial_\nu (\Delta u) .
\end{aligned}
\]
Recall that
\[
u (z) = \sum_{k=0}^\infty \big[ c_1(k) r^k + c_2(k) r^{k+2} + c_3(k) r^{k+4}  \big] Y_k(\omega ),
\]
which implies that
\[
\partial_\nu u  \big|_{\mathbb S^n} = -\frac{n-5}2 \sum_{k=0}^\infty Y_k (\omega),
\]
that
\[
\Delta u (z) = 2 \sum_{k=0}^\infty \big[ (n+1+2k) c_2(k) + 2(n+3+2k)c_3(k) r^2 \big] r^kY_k(\omega ),
\]
that
\[
\partial_\nu \Delta u \big|_{\mathbb S^n}  = 2 \sum_{k=0}^\infty \big[ k(n+1+2k) c_2(k) + 2(k+2)(n+3+2k)c_3(k) \big] Y_k(\omega ),
\]
that
\[
\Delta^2 u (z) = 8 \sum_{k=0}^\infty (n+1+2k)(n+3+2k)c_3(k) r^kY_k(\omega ),
\]
and that
\[
\partial_\nu \Delta^2 u \big|_{\mathbb S^n}= 8 \sum_{k=0}^\infty k (n+1+2k)(n+3+2k)c_3(k) Y_k(\omega ).
\]
We are now in position to get
\begin{equation}\label{eq:6BecknerNablaDelta^2}
\begin{aligned}
\int_{\bB^{n+1}} &|\nabla \Delta u|^2 dz \\
 = &\frac 1{36} \sum_{k=0}^\infty \int_{\mathbb S^n} (n-5+2k)^2 (12k^2+8kn+n^2-6n+9) (n+3+2k) | Y_k|^2 d\omega.
\end{aligned}
\end{equation}
Now let $0< s < n/5$. We define the function $F$ on $\R^n$ by
\[
F(x) = f(\mathcal S(x)) J_{\mathcal S}(x)^{\frac{n-5s}{2n}}.
\]
Then we have
\[
\int_{\bS^n} |f|^{\frac{2n}{n-5s}} d\omega = \int_{\R^n} |F|^{\frac{2n}{n-5s}} dx
\]
and as in \eqref{eq:GN} we still have
\[
\int_{\R^n} F(x) (-\Delta)^{5s/2} F(x) dx = \sum_{k=0}^\infty\frac{\Gamma(k+n/2+5s/2)}{\Gamma(k+n/2-5s/2)} \int_{\bS^n} |Y_k|^2 ( \omega ) d\omega .
\]
We now use the fractional Sobolev inequality \eqref{eq:SobolevSpaceOrderS/2}, the preceding identity, and \eqref{eq:6BecknerNablaDelta^2} to get
\begin{equation}\label{eq:6BecknerTypeHarmonicNoWeight}
\begin{split}
\frac 83 \frac{\Gamma(\frac{n+5s}2)}{\Gamma(\frac{n-5s}2)} & \omega_n^{5s/n}  \Big(\int_{\bS^n} |f|^{\frac{2n}{n-5s}} d\omega \Big)^{\frac{n-5s}n}\\
\leqslant & \frac 83 \sum_{k=0}^\infty           \frac{\Gamma(k+n/2+5s/2)}{\Gamma(k+n/2-5s/2)}         \int_{\bS^n} |Y_k|^2  d\omega \\
=& \int_{\bB^{n+1}} |\nabla \Delta u|^2 dz + \sum_{k=0}^\infty 
\left( 
\begin{aligned}
&\frac 83 \frac{\Gamma(k+n/2+5s/2)}{\Gamma(k+n/2-5s/2)} \\
&-\frac 1{36} (n-5+2k)^2  (n+3+2k)\\
&\times (12k^2+8kn+n^2-6n+9)
\end{aligned}
\right)  \int_{\bS^n} |Y_k|^2  d\omega .
\end{split}
\end{equation}
Thus, we are in position to state the following sharp Beckner type inequality of order six on $\mathbb S^n$.

\begin{theorem}\label{thm6BecknerTypeNoWeight}
Let $n\geqslant 5$ and $0< s < n/5$. Let $f\in C^\infty(\bS^n)$ and $v$ be a smooth extension of $f$ to the unit ball $\bB^{n+1}$ satisfying the boundary conditions
\[
\left\{
\begin{split}
\partial_\nu v \big|_{\bS^n}  =& -\frac{n-5}2 f, \\
\partial^2_\nu v\big|_{\bS^n}  =&  \frac13 \widetilde{\Delta} f +\frac{(n-5)(n-6)}{6} f .
\end{split}
\right.
\] 
Suppose that $f$ has a decomposition on spherical harmonics as $f =\sum_{k=0}^\infty Y_k(\omega )$. Then the following inequality holds
\begin{equation}\label{eq:6BecknerTypeNoWeight}
\begin{split}
\frac 83 \frac{\Gamma(\frac{n+5s}2)}{\Gamma(\frac{n-5s}2)} \omega_n^{5s/n} &  \Big(\int_{\bS^n} |f|^{\frac{2n}{n-5s}} d\omega \Big)^{\frac{n-5s}n}
\leqslant \int_{\bB^{n+1}} |\nabla \Delta v|^2 dz\\
&  + \sum_{k=0}^\infty 
\left( 
\begin{aligned}
&\frac 83 \frac{\Gamma(k+n/2+5s/2)}{\Gamma(k+n/2-5s/2)} \\
&-\frac 1{36} (n-5+2k)^2  (n+3+2k)\\
&\times (12k^2+8kn+n^2-6n+9)
\end{aligned}
\right)  \int_{\bS^n} |Y_k|^2  d\omega .
\end{split}
\end{equation}
Moreover, equality in \eqref{eq:6BecknerTypeNoWeight} holds if, and only if, $v$ is a triharmonic extension of a function of the form
\[
c|1-\langle z_0, \xi \rangle |^{-(n-5s)/2},
\]
where $c>0$ is a constant, $\xi \in \mathbb S^n$, and $z_0$ is some fixed point in the interior of $\mathbb B^{n+1}$.
\end{theorem}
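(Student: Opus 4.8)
The plan is to obtain Theorem \ref{thm6BecknerTypeNoWeight} from the identity \eqref{eq:6BecknerTypeHarmonicNoWeight}, which has already been established above for the particular triharmonic extension of $f$ satisfying the boundary conditions \eqref{eq:6BecknerRequire}, combined with a variational comparison argument in the spirit of Step 4 in the proof of Theorem \ref{thmTraceOrder4}. The point is that \eqref{eq:6BecknerTypeHarmonicNoWeight} is nothing but the fractional Sobolev inequality \eqref{eq:SobolevSpaceOrderS/2} applied to $F(x)=f(\mathcal S(x))J_{\mathcal S}(x)^{(n-5s)/(2n)}$, rewritten via the spherical-harmonic identity of \eqref{eq:GN} type and via the exact evaluation \eqref{eq:6BecknerNablaDelta^2} of $\int_{\bB^{n+1}}|\nabla\Delta u|^2\,dz$ in terms of the coefficients $c_1(k),c_2(k),c_3(k)$; so the analytic content is already in place and what remains is to pass from the distinguished extension to an arbitrary one and to analyze equality.

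First I would fix $f\in C^\infty(\bS^n)$ and recall from the discussion around \eqref{eq:varproblem6} that the minimization problem $\inf_{w\in\mathscr W_f}\int_{\bB^{n+1}}|\nabla\Delta w|^2\,dz$, with $\mathscr W_f$ as in \eqref{eq:varproblem6Set}, is attained by a function $u$ which is a triharmonic extension of $f$ realizing the two boundary conditions in \eqref{eq:6BecknerRequire}. For this $u$ the computation producing \eqref{eq:6BecknerNablaDelta^2}, hence \eqref{eq:6BecknerTypeHarmonicNoWeight}, gives \eqref{eq:6BecknerTypeNoWeight} with $v$ replaced by $u$. Then, for an arbitrary smooth extension $v$ of $f$ satisfying the same boundary conditions, minimality of $u$ yields $\int_{\bB^{n+1}}|\nabla\Delta u|^2\,dz\leqslant\int_{\bB^{n+1}}|\nabla\Delta v|^2\,dz$, and \eqref{eq:6BecknerTypeNoWeight} follows for $v$ as well.

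For the equality case I would argue as in the proof of Theorem \ref{thm2BecknerTypeWeight}: equality in \eqref{eq:6BecknerTypeNoWeight} forces simultaneously that $v$ coincides with the minimizer $u$ (so $v$ is triharmonic with the prescribed Neumann data) and that equality holds in the fractional Sobolev inequality \eqref{eq:SobolevSpaceOrderS/2} applied to $F$. The latter gives $F(x)=c(\mu+|x-x^0|^2)^{-(n-5s)/2}$ for some $c,\mu>0$ and $x^0\in\R^n$; pulling this back through the stereographic projection $\mathcal S$ exactly as at the end of the proof of Theorem \ref{thm2BecknerTypeWeight} shows $f(\xi)=c\,|1-\langle z_0,\xi\rangle|^{-(n-5s)/2}$ for a suitable $z_0$ in the interior of $\bB^{n+1}$, with $v$ its triharmonic extension.

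I expect the only genuinely nonroutine point to be the existence and regularity of the minimizer of \eqref{eq:varproblem6}: one must set up the problem in $W^{3,2}(\bB^{n+1})$, check that $\mathscr W_f$ is a nonempty closed affine subspace on which $v\mapsto\|\nabla\Delta v\|_{L^2}$ is coercive modulo that affine structure, derive the Euler--Lagrange system $\Delta^3u=0$ with the stated boundary data in the trace sense, and invoke elliptic regularity for smoothness; this is standard but must be recorded. The remaining input is the bookkeeping behind \eqref{eq:6BecknerNablaDelta^2} and the verification that the coefficient there is $\tfrac1{36}(n-5+2k)^2(n+3+2k)(12k^2+8kn+n^2-6n+9)$, which was already carried out (and is best confirmed by computer algebra), so no new estimate is needed.
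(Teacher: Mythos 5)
Your proposal is correct and follows essentially the same route as the paper: establish the identity \eqref{eq:6BecknerTypeHarmonicNoWeight} for the triharmonic minimizer of \eqref{eq:varproblem6} over $\mathscr W_f$, pass to an arbitrary extension by minimality of $\int_{\bB^{n+1}}|\nabla\Delta\cdot|^2\,dz$, and settle equality via the fractional Sobolev inequality and the stereographic pull-back as in Theorem \ref{thm2BecknerTypeWeight}. No substantive difference from the paper's argument.
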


\begin{proof}
Let $f \in C^\infty(\bS^n)$, it is well known that the minimizing problem \eqref{eq:varproblem6}
\[
\inf_w \int_{\bB^{n+1}} |\nabla \Delta w|^2  dz
\]
under the constraint $w \in \mathscr W_f$ with $\mathscr W_f$ is given in \eqref{eq:varproblem6Set} is attained by some function $u$. Clearly, $\Delta^3 u(z) =0$ in $\bB^{n+1}$ and $u \equiv f$, $\partial_\nu u = -((n-5)/2)f$, and $\partial^2_\nu u=(1/3) \widetilde{\Delta} f +((n-5)(n-6)/6) f$ on $\bS^n$. The inequality \eqref{eq:6BecknerTypeNoWeight} then follows  from \eqref{eq:6BecknerTypeHarmonicNoWeight} and the fact that
\[
\int_{\bB^{n+1}}  |\nabla \Delta u|^2 dz  \leqslant \int_{\bB^{n+1}}  |\nabla \Delta v|^2 dz
\]
since $u$ is a minimizer. Finally, the equality case in \eqref{eq:6BecknerTypeNoWeight} can be obtained by following the argument used in the proof of Theorem \ref{thm2BecknerTypeWeight}.
\end{proof}

It is worth noticing our choice of the coefficient $(8/3) \Gamma(\frac{n+5s}2)/\Gamma(\frac{n-5s}2) \omega_n^{5s/n}$ appearing on the left hand side of \eqref{eq:6BecknerTypeNoWeight} comes from the similar coefficient of the left hand side of \eqref{eq:SobolevTraceSpace6Expected}.


\subsection{Sharp Sobolev trace inequality of order six on $\bB^{n+1}$: Proof of Theorem \ref{thmTraceOrder6}}
\label{subsec-6SobolevBall}

Now we use the Beckner type inequality of order six \eqref{eq:6BecknerTypeNoWeight} to derive the sharp Sobolev trace inequality of order six \eqref{eq:TraceOrder6} on $\mathbb S^n$. To this purpose, it is necessary to compute $\int_{\mathbb S^n}  (\widetilde \Delta f)^2 d\omega$ and $\int_{\mathbb S^n}  |\widetilde \nabla f|^2 d\omega$ in terms of spherical harmonics. Since $f (\omega)= \sum_{k=0}^\infty Y_k(\omega )$, we obtain
\[
\widetilde \Delta f (\omega)= - \sum_{k=0}^\infty k(n-1+k) Y_k(\omega ),
\]
which then implies that
\begin{equation}\label{eq:6Delta^2f}
\int_{\mathbb S^n}  (\widetilde \Delta f)^2 d\omega =  \sum_{k=0}^\infty k^2(n-1+k)^2       \int_{\bS^n} |Y_k|^2 ( \omega ) d\omega.
\end{equation}
In a similar way, we compute
\begin{equation}\label{eq:6Nabla^2f}
\int_{\mathbb S^n}  |\widetilde \nabla f|^2 d\omega = - \int_{\mathbb S^n}  f \widetilde \Delta f  d\omega =  \sum_{k=0}^\infty k(n-1+k)        \int_{\bS^n} |Y_k|^2 ( \omega ) d\omega.
\end{equation}
We now let $n \geqslant 6$ and select $s=1$ in \eqref{eq:6BecknerTypeNoWeight} to get
\begin{equation}\label{eq:6BecknerTypeNoWeightS=1}
\begin{split}
\frac 83 \frac{\Gamma(\frac{n+5}2)}{\Gamma(\frac{n-5}2)} & \omega_n^{5/n}  \Big(\int_{\bS^n} |f|^{\frac{2n}{n-5}} d\omega \Big)^{\frac{n-5}n}
\leqslant \int_{\bB^{n+1}} |\nabla \Delta v|^2 dz\\
&  + \sum_{k=0}^\infty 
\left( 
\begin{aligned}
& \frac 83 \frac{\Gamma(k+n/2+5/2)}{\Gamma(k+n/2-5/2)} \\
&-\frac 1{36} (n-5+2k)^2  (n+3+2k)\\
&\times (12k^2+8kn+n^2-6n+9)
\end{aligned}
\right)  \int_{\bS^n} |Y_k|^2 ( \omega ) d\omega .
\end{split}
\end{equation}
When transferring back the above Beckner type inequality into the correct sharp trace inequality \eqref{eq:TraceOrder6}, the right hand side of \eqref{eq:TraceOrder6} must contain all lower order terms $\int_{\bS^n} (\widetilde\Delta f)^2 d\omega$, $\int_{\bS^n} |\widetilde{\nabla} f|^2 d\omega$, and $\int_{\bS^n} |f|^2 d\omega $. Therefore, it is necessary to recast the coefficient of the term $\int_{\bS^n} |Y_k|^2 ( \omega ) d\omega$ in \eqref{eq:6BecknerTypeNoWeightS=1} in such a way that it only consists of the term $k(n-1+k)$. Without using any computer software, tedious computation shows that
\[
\begin{aligned}
\frac 83\frac{\Gamma(k+n/2+5/2)}{\Gamma(k+n/2-5/2)} -\frac 1{36}& (n-5+2k)^2  (n+3+2k) (12k^2+8kn+n^2-6n+9)\\
=&\frac 1{18} \big[4(n+3) k(n-1+k) + (n-3)(n^2+4n-9) \big]\\
& \times \big[4 k(n-1+k) + (n+3)(n-5)\big].
\end{aligned}
\]
This, \eqref{eq:6Delta^2f}, and \eqref{eq:6Nabla^2f} give us the desired inequality \eqref{eq:TraceOrder6}, namely
\[
\begin{aligned}
\frac 83 \frac{\Gamma(\frac{n+5}2)}{\Gamma(\frac{n-5}2)} \omega_n^{5/n} &\Big(\int_{\bS^n} |f|^{\frac{2n}{n-5}}d\omega \Big)^{\frac{n-5}n}\\
 \leqslant &\int_{\bB^{n+1}} |\nabla \Delta v|^2 dx + \frac{8(n+3)}9\int_{\bS^n} (\widetilde\Delta f)^2 d\omega\\
&+ \frac{4(n^3+n^2-21n-9)}9 \int_{\bS^n} |\widetilde{\nabla} f|^2 d\omega + c_n \int_{\bS^n} |f|^2 d\omega 
\end{aligned}
\]
with $c_n=(n-5)(n-3)(n+3)(n^2+4n-9)/18$. Clearly, equality in \eqref{eq:TraceOrder6} holds if, and only if, equality in \eqref{eq:6BecknerTypeNoWeight} with $s = 1$ holds, namely, $v$ is a triharmonic extension of a function of the form
\[
c|1-\langle z_0, \xi \rangle |^{-(n-5)/2},
\]
where $c$ is a constant, $\xi \in \mathbb S^n$, and $z_0$ is some fixed point in the interior of $\mathbb B^{n+1}$, which also satisfies the Neumann boundary conditions. The proof is complete.


\subsection{A Ledebev--Milin type inequality of order six on $\bB^6$}

In the last part of this section, we treat the limiting case $n = 5$. Our aim is to derive a Ledebev--Milin type inequality of order six. To this purpose, we follow the strategy used to obtain Theorem \ref{thmSMOrder4}.

Suppose that $f \in C^\infty(\bS^5)$ with $\int_{\bS^5} f d\omega =0$ and let $v$ be a smooth extension of $f$ in $\bB^6$. As before, we decompose $f$ in terms of spherical harmonics to get
\[
f = \sum_{k=1}^\infty Y_k(\omega )
\]
Note that the function $ 1 + \frac 5 2(1-s) v$ is also a smooth extension of $1 +  \frac 5 2(1-s) f$ in $\mathbb B^6$ and $\omega_5 =  \pi^3$. Therefore, we can apply Theorem \ref{thm6BecknerTypeNoWeight} to get
\[
\begin{split}
\frac 83 \frac{\Gamma(\frac{5+5s}2)}{\Gamma(\frac{5-5s}2)} \Big[ \pi^{3s} &  \Big(\int_{\bS^5} \Big|           1+ \frac 5 2(1-s) f        \Big|^{\frac{2}{1- s}} d\omega \Big)^{1-s} - 1 \Big]\\
\leqslant & \Big( \frac 5 2(1-s) \Big)^2  \int_{\bB^6} |\nabla \Delta v|^2 dz\\
&  +  \Big( \frac 5 2(1-s) \Big)^2 \sum_{k=1}^\infty 
\left( 
\begin{aligned}
& \frac 83 \frac{\Gamma(k+(5+5s)/2)}{\Gamma(k+(5-5s)/2)} \\
&-\frac 8{9} k^2  (k+4)  (3k^2+10k+1)
\end{aligned}
\right)  
\int_{\bS^5} |Y_k|^2  d\omega 
\end{split}
\]
which implies
\begin{align*}
\frac 25  \frac{\Gamma(\frac{5+5s}2)}{\Gamma(\frac{5-5s}2)}& \Big[ \Big(\frac 1{\pi^3} \int_{\bS^5} \Big(1 + \frac{5(1-s)}2  f\Big)^{\frac{2}{1-s}} d\omega \Big)^{1-s}  - 1 \Big] \\
 \leqslant & \frac{ 15  (1-s)^2}{16 \pi^3}
 \left[ \int_{\bB^6}|\nabla \Delta v|^2 dz  
  +  \sum_{k=1}^\infty 
\left( 
\begin{aligned}
& \frac 83 \frac{\Gamma(k+(5+5s)/2)}{\Gamma(k+(5-5s)/2)} \\
&-\frac 8{9} k^2  (k+4)  (3k^2+10k+1)
\end{aligned}
\right)  
\int_{\bS^5}|Y_k|^2 d\omega 
\right].
\end{align*}
Dividing both sides by $(1-s)^2$ and making use of $\Gamma(\frac{7-5s}2) = (5/2)(1-s)\Gamma(\frac{5-5s}2)$ to get
\begin{align*}
 \frac{\Gamma(\frac{5+5s}2)}{\Gamma(\frac{7-5s}2)} \frac 1{1-s} & \Big[ \Big(\frac 1{\pi^3} \int_{\bS^5} \Big(1 +\frac{5(1-s)}2 f\Big)^{\frac{2}{1-s}} d\omega \Big)^{1-s}  - 1 \Big] \\
\leqslant &\frac {15}{16\pi^3}
\left[ \int_{\bB^6}|\nabla \Delta v|^2 dz  
  +  \sum_{k=1}^\infty 
\left( 
\begin{aligned}
& \frac 83 \frac{\Gamma(k+(5+5s)/2)}{\Gamma(k+(5-5s)/2)} \\
&-\frac 8{9} k^2  (k+4)  (3k^2+10k+1)
\end{aligned}
\right)  
\int_{\bS^5}|Y_k|^2 d\omega 
\right].
\end{align*}
Note that 
\[\begin{split}
\lim_{s \nearrow 1 } \Big( \frac 83 \frac{\Gamma(k+\frac{5+5s}2)}{\Gamma(k+ \frac{5-5s}2)} -& \frac 8{9} k^2  (k+4)  (3k^2+10k+1)  \Big) \\
= &\frac 1{18} \big[32 k(k+4) + 72 \big] \big[4 k(k+4)\big].
\end{split}\]
for any $k \geqslant 1$. Hence letting $s \nearrow 1$, we obtain
\[\begin{split}
24 \log \Big(\frac1{\pi^3} \int_{\bS^5} e^{5f} d\omega \Big) \leqslant & \frac {15}{16\pi^3}  \int_{\bB^6} |\nabla \Delta v|^2 dx  \\
&+ 
\frac {15}{16\pi^3} \Big[\frac{64}9 \int_{\bS^5} |\widetilde\Delta f|^2 d\omega   +16 \int_{\bS^5} |\widetilde \nabla f|^2 d\omega  \Big]
\end{split}\]
for any smooth function $f$ with $\int_{\bS^5} f d\omega =0$. For general function $f$, we apply the previous inequality for $f- \pi^{-3} \int_{\bS^5} f d\omega$ to get the following theorem.

\begin{theorem}[Lebedev--Milin inequality of order six]\label{thmSMOrder6}
Let $f\in C^\infty(\bS^5)$ and suppose that $v$ is a smooth extension of $f$ to the unit ball $\bB^6$. If $v$ satisfies the boundary conditions
\[
\left\{
\begin{split}
\partial_\nu v \big|_{\bS^5} =& 0,\\
\partial_\nu^2 v  \big|_{\bS^5} = &\frac 13 \widetilde \Delta f,
 \end{split}
\right.
\]
then we have the following sharp trace inequality
\begin{equation}\label{eqSMorder6}
\begin{split}
\log \lt(\frac1{\pi^3} \int_{\bS^5} e^{5 f} d\omega \rt) \leqslant & \frac 5{128\pi^3} \int_{\bB^6} |\nabla \Delta v|^2 dx + \frac 5{18\pi^3} \int_{\bS^5} |\widetilde\Delta f|^2 d\omega  \\
&+ \frac 5{8\pi^3} \int_{\bS^5} |\widetilde\nabla f|^2 d\omega + \frac 5{\pi^3} \int_{\bS^5} f d\omega .
\end{split}
\end{equation}
Moreover, equality in \eqref{eqSMorder6} holds if, and only if, $v$ is a biharmonic extension of a function of the form
\[
c - \log |1 -  \langle z_0, \xi \rangle|,
\]
where $c>0$ is a constant, $\xi \in \bS^5$, $z_0$ is some fixed point in the interior of $\mathbb B^6$, and $v$ fulfills the boundary conditions $\partial_\nu v = 0$ and $\partial^2_\nu v = (1/3)\widetilde \Delta f$.
\end{theorem}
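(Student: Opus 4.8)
The plan is to obtain \eqref{eqSMorder6} as the limiting case $s\nearrow 1$ of the sharp Beckner type inequality of order six in Theorem \ref{thm6BecknerTypeNoWeight}, mimicking the passage from the order-four Beckner inequality to the order-four Lebedev--Milin inequality \eqref{eqSMorder4} carried out for Theorem \ref{thmSMOrder4}. First I would reduce to the case $\int_{\bS^5} f\,d\omega = 0$; for such an $f$ with spherical harmonic expansion $f = \sum_{k\geqslant 1} Y_k$, I would apply \eqref{eq:6BecknerTypeNoWeight} with $n=5$ and $0<s<1$ to the affine shift $1+\tfrac52(1-s)f$, whose smooth extension is $1+\tfrac52(1-s)v$. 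One must check that the two boundary conditions required in Theorem \ref{thm6BecknerTypeNoWeight}, which for $n=5$ read $\partial_\nu(\cdot)\big|_{\bS^5}=0$ and $\partial_\nu^2(\cdot)\big|_{\bS^5}=\tfrac13\widetilde\Delta(\cdot)$, translate exactly into the hypotheses $\partial_\nu v\big|_{\bS^5}=0$ and $\partial_\nu^2 v\big|_{\bS^5}=\tfrac13\widetilde\Delta f$ of the present theorem; this is immediate because $n-5=0$ and $\widetilde\Delta$ annihilates constants.

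Next I would divide the resulting inequality by $(1-s)^2$ and use $\Gamma\bigl(\tfrac{7-5s}2\bigr)=\tfrac52(1-s)\Gamma\bigl(\tfrac{5-5s}2\bigr)$ together with $\omega_5=\pi^3$ to rewrite the left-hand side as
\[
\frac{\Gamma(\frac{5+5s}2)}{\Gamma(\frac{7-5s}2)}\,\frac1{1-s}\Bigl[\Bigl(\frac1{\pi^3}\int_{\bS^5}\bigl(1+\tfrac{5(1-s)}2 f\bigr)^{\frac2{1-s}}d\omega\Bigr)^{1-s}-1\Bigr].
\]
Letting $s\nearrow 1$ and applying l'H\^opital's rule, this tends to $24\log\bigl(\tfrac1{\pi^3}\int_{\bS^5}e^{5f}d\omega\bigr)$, since $\bigl(1+\tfrac{5(1-s)}2 f\bigr)^{2/(1-s)}\to e^{5f}$ and $\Gamma(5)/\Gamma(1)=24$. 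On the right-hand side, the coefficient of $\int_{\bS^5}|Y_k|^2d\omega$ converges to $\tfrac1{18}\bigl(32k(k+4)+72\bigr)\bigl(4k(k+4)\bigr)=\tfrac{64}9[k(k+4)]^2+16\,k(k+4)$; since $\widetilde\Delta Y_k=-k(k+4)Y_k$ on $\bS^5$, these two pieces reassemble, via \eqref{eq:6Delta^2f} and \eqref{eq:6Nabla^2f}, into $\tfrac{64}9\int_{\bS^5}(\widetilde\Delta f)^2d\omega+16\int_{\bS^5}|\widetilde\nabla f|^2d\omega$. Dividing by $24$ and simplifying $\tfrac{15}{16\pi^3}\cdot\tfrac1{24}=\tfrac5{128\pi^3}$, $\tfrac{15}{16\pi^3}\cdot\tfrac{64}9\cdot\tfrac1{24}=\tfrac5{18\pi^3}$, $\tfrac{15}{16\pi^3}\cdot16\cdot\tfrac1{24}=\tfrac5{8\pi^3}$ yields \eqref{eqSMorder6} for mean-zero $f$.

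To remove the mean-zero restriction I would apply the inequality just obtained to $g:=f-\overline f$ with $\overline f:=\tfrac1{\pi^3}\int_{\bS^5}f\,d\omega$; its extension $v-\overline f$ has the same $\nabla\Delta$, the same boundary data $\partial_\nu(v-\overline f)=0$, $\partial_\nu^2(v-\overline f)=\tfrac13\widetilde\Delta g$, and $\widetilde\Delta g=\widetilde\Delta f$, $\widetilde\nabla g=\widetilde\nabla f$, while $\log\bigl(\tfrac1{\pi^3}\int e^{5g}\bigr)=\log\bigl(\tfrac1{\pi^3}\int e^{5f}\bigr)-5\overline f$, which produces precisely the extra term $\tfrac5{\pi^3}\int_{\bS^5}f\,d\omega$ in \eqref{eqSMorder6}. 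The equality case is inherited from that of Theorem \ref{thm6BecknerTypeNoWeight}: its optimizers $c\,|1-\langle z_0,\xi\rangle|^{-(n-5s)/2}$ have, for $n=5$, exponent $-\tfrac{5(1-s)}2\to 0$, so through $1+\tfrac{5(1-s)}2 f=c\,|1-\langle z_0,\xi\rangle|^{-5(1-s)/2}+o(1-s)$ they degenerate in the limit to functions $f$ of the form $c-\log|1-\langle z_0,\xi\rangle|$, extended triharmonically and subject to the stated boundary conditions.

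The main obstacle I anticipate is not any single computation but the rigorous justification of the interchange of the limit $s\nearrow 1$ with the infinite series on the right-hand side of \eqref{eq:6BecknerTypeNoWeight}, the same delicate point left ``formal'' for \eqref{eqSMorder2} and \eqref{eqSMorder4}. I would handle it either by first establishing \eqref{eqSMorder6} for $f$ with a finite spherical harmonic expansion and then passing to the general case by density, or by invoking Fatou's lemma once one verifies that the coefficients appearing in \eqref{eq:6BecknerTypeNoWeight} stay nonnegative for $0<s<1$ (which is consistent with the strict positivity of their limits $\tfrac{64}9[k(k+4)]^2+16\,k(k+4)$ for $k\geqslant 1$); a secondary bookkeeping point is keeping track of all numerical constants, which is straightforward but error-prone and is where a symbolic check, as used elsewhere in the paper, would be prudent.
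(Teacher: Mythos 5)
Your proposal is correct and follows essentially the same route as the paper: apply Theorem \ref{thm6BecknerTypeNoWeight} with $n=5$, $0<s<1$ to $1+\tfrac52(1-s)f$ for mean-zero $f$, divide by $(1-s)^2$, use $\Gamma(\tfrac{7-5s}2)=\tfrac52(1-s)\Gamma(\tfrac{5-5s}2)$ and $\omega_5=\pi^3$, let $s\nearrow1$ to get the limit coefficient $\tfrac1{18}(32k(k+4)+72)(4k(k+4))$, and then shift by $\overline f$; all your constants match the paper's. Your added care about justifying the limit--series interchange (by density or Fatou) addresses a point the paper treats only formally, and is a welcome refinement rather than a deviation.
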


We note that the coefficient of the two terms in the middle of the right hand side of \eqref{eqSMorder6} is a multiple of $c_5^{(1)}$ and $c_5^{(2)}$ given in \eqref{eq:CoefficientC_n}, respectively. Clearly, Inequality \eqref{eqSMorder6} can be rewritten as follows
\[\begin{split}
\log \Big(\frac1{\pi^3} \int_{\bS^5} e^{5 (f - \overline f)} d\omega \Big) \leqslant & \frac 5{128\pi^3} \int_{\bB^6} |\nabla \Delta v|^2 dx\\
& + \frac 5{18\pi^3} \int_{\bS^5} |\widetilde\Delta f|^2 d\omega  + \frac 5{8\pi^3} \int_{\bS^5} |\widetilde\nabla f|^2 d\omega 
\end{split}\]
where $\overline f$ is the average of $f$, which is $\pi^{-3} \int_{\bS^5} f d\omega$.


\section{Sobolev trace inequality of order eight and beyond}
\label{sec-Order8+}

In the final part of the paper, we would like to emphasize that sharp Sobolev trace inequalities of lower order can be easily derived using our approach. As demonstrated in Section \ref{sec-Order6} for the trace inequality of order six, we present in this section sharp trace inequalities of order eight on $\bB^{n+1}$ and on $\R_+^{n+1}$.

The strategy is as follows. At the beginning, we have to look for a sharp trace inequality of order eight on $\R_+^{n+1}$. In view of the boundary conditions in \eqref{eq:6SpaceNeumannCondition}$_\lambda$, there is an extra boundary condition involving the third order partial derivative $\partial_y^3 U(x,0)$. Our choice for such a boundary condition again comes from \cite[Theorem 3.3]{rayyang2013}. Our sharp trace inequality on $\R_+^{n+1}$ reads as follows.

\begin{theorem}[Sobolev trace inequality of order eight on $\R_+^{n+1}$]\label{thmTraceSpaceOrder8}
Let $U\in W^{4,2}(\overline{\R_+^{n+1}})$ be satisfied the Neumann boundary condition
\begin{subequations}\label{eq:8SpaceNeumannCondition}
\begin{align}
\partial_y U(x,0)=0, \quad \partial^2_y U(x,0)=\lambda \Delta_x U(x,0), \quad \partial^3_y U(x,0)=0.
\tag*{\eqref{eq:8SpaceNeumannCondition}$_\lambda$}
\end{align}
\end{subequations}
Then we have the sharp trace inequality
\begin{subequations}\label{eq:TraceSpaceOrder8}
\begin{align}
(20\lambda^2-8\lambda +4) \frac{\Gamma(\frac{n+7}2)}{\Gamma(\frac{n-7}2)} \omega_n^{7/n} \Big(\int_{\R^n} |U(x,0)|^{\frac{2n}{n-7}} dx\Big)^{\frac{n-7}n} \leqslant \int_{\R^{n+1}_+} |\nabla \Delta U(x,y)|^2 dx dy.
\tag*{\eqref{eq:TraceSpaceOrder8}$_\lambda$}
\end{align}
\end{subequations}
Moreover, equality in \eqref{eq:TraceSpaceOrder8}$_\lambda$ holds if, and only if, $U$ is a quadharmonic extension of a function of the form
\[
c \big( 1 +  |x - x_0|^2 \big)^{-(n-7)/2},
\]
where $c>0$ is a constant, $x \in \R^n$, $x_0$ is some fixed point in $\R^n$, and $U$ fulfills the boundary condition \eqref{eq:8SpaceNeumannCondition}$_\lambda$.
\end{theorem}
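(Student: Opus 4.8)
The plan is to mirror the two-stage argument behind Theorem~\ref{thmTraceSpaceOrder6}. First I would prove the analogue of Proposition~\ref{apx-propExtension6}: any $U\in W^{4,2}(\overline{\R_+^{n+1}})$ with $\Delta^4 U=0$ in $\R_+^{n+1}$ and satisfying \eqref{eq:8SpaceNeumannCondition}$_\lambda$ obeys the identity
\[
\int_{\R^{n+1}_+}|\Delta^2 U(x,y)|^2\,dx\,dy=(20\lambda^2-8\lambda+4)\int_{\R^n}U(x,0)(-\Delta)^{7/2}U(x,0)\,dx .
\]
(Here I read the interior seminorm on the right-hand side of \eqref{eq:TraceSpaceOrder8}$_\lambda$ as the fourth-order quantity $\int_{\R^{n+1}_+}|\Delta^2 U|^2$, which is the object compatible with $U\in W^{4,2}$, with the exponent $\tfrac{2n}{n-7}$, and with the ``quadharmonic'' extremals.) Then I would combine this identity with a minimization argument and the sharp fractional Sobolev inequality for $(-\Delta)^{7/2}$ on $\R^n$, a case of \eqref{eq:SobolevSpaceOrderS/2} (parameter $s=7/2$) which is available exactly when $n>7$.

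For the identity, I would take the Fourier transform in the $x$ variable. Then $\Delta^4 U=0$ becomes $\big(-|\xi|^2\,\mathrm{Id}+\partial_y^2\big)^4\widehat U(\xi,y)=0$, a constant-coefficient ODE of order eight in $y$ whose characteristic polynomial, after rescaling $t=|\xi|y$, is $(r^2-1)^4=(r-1)^4(r+1)^4$. Boundedness on $[0,\infty)$ picks out the span of $e^{-t},te^{-t},t^2e^{-t},t^3e^{-t}$, so $\widehat U(\xi,y)=\widehat u(\xi)\,\phi(|\xi|y)$ with $\widehat u(\xi)=\widehat U(\xi,0)$ and $\phi$ the unique bounded solution of $\phi^{(8)}-4\phi^{(6)}+6\phi^{(4)}-4\phi''+\phi=0$ with $\phi(0)=1$, $\phi'(0)=0$, $\phi''(0)=-\lambda$, $\phi'''(0)=0$; the last three conditions are the Fourier transforms of $\partial_y U(x,0)=0$, $\partial_y^2 U(x,0)=\lambda\Delta_x U(x,0)$ and $\partial_y^3 U(x,0)=0$. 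Solving gives
\[
\phi(y)=\Big(1+y+\frac{1-\lambda}{2}y^2+\frac{1-3\lambda}{6}y^3\Big)e^{-y}.
\]
Since $\widehat{\Delta^2 U}(\xi,y)=|\xi|^4\widehat u(\xi)\,\big[(1-\partial_t^2)^2\phi\big](|\xi|y)$, the Plancherel theorem and the substitution $t=|\xi|y$ give $\int_{\R^{n+1}_+}|\Delta^2 U|^2=\big(\int_0^{\infty}[(1-\partial_t^2)^2\phi]^2\,dt\big)\cdot\frac{1}{(2\pi)^n}\int_{\R^n}|\xi|^{7}|\widehat u(\xi)|^2\,d\xi$; and since $(1-\partial_t^2)^2\phi=\big(8\lambda+4(1-3\lambda)t\big)e^{-t}$ one evaluates $\int_0^\infty[(1-\partial_t^2)^2\phi]^2\,dt=20\lambda^2-8\lambda+4$. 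Combined with $\frac{1}{(2\pi)^n}\int_{\R^n}|\xi|^{7}|\widehat u|^2\,d\xi=\int_{\R^n}u(-\Delta)^{7/2}u\,dx$, this proves the identity.

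Finally, consider $\inf_w\int_{\R^{n+1}_+}|\Delta^2 w|^2$ over all $w$ satisfying \eqref{eq:8SpaceNeumannCondition}$_\lambda$. A minimizer $W$ is quadharmonic and still realizes the three boundary conditions, so the identity applies to it; feeding $W(\cdot,0)$ into the sharp $(-\Delta)^{7/2}$-Sobolev inequality on $\R^n$ (valid for $n>7$) gives $\int_{\R^n}W(x,0)(-\Delta)^{7/2}W(x,0)\,dx\geqslant\frac{\Gamma(\frac{n+7}{2})}{\Gamma(\frac{n-7}{2})}\omega_n^{7/n}\big(\int_{\R^n}|W(x,0)|^{\frac{2n}{n-7}}\,dx\big)^{\frac{n-7}{n}}$, and combining with the identity and minimality of $W$ yields \eqref{eq:TraceSpaceOrder8}$_\lambda$ for every admissible $U$. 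Equality forces equality in \eqref{eq:SobolevSpaceOrderS/2}, hence $W(\cdot,0)=c(\mu+|x-x_0|^2)^{-(n-7)/2}$, which rescales to $\mu=1$, and $U$ is its quadharmonic extension subject to \eqref{eq:8SpaceNeumannCondition}$_\lambda$.

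I expect two points to require the most care. The first is the functional-analytic setup over the half-space: existence and regularity of the minimizer $W$ in the appropriate homogeneous $W^{4,2}$-class, the fact that it is genuinely quadharmonic, and that it attains all three prescribed Neumann-type traces — treated as standard here, just as in the proof of Theorem~\ref{thmTraceSpaceOrder6}. The second, more bookkeeping than idea, is the explicit evaluation of $\int_0^\infty[(1-\partial_t^2)^2\phi]^2\,dt$ and of the accompanying Fourier-side algebra; as with the other computations in this section, it is safest to confirm these with computer algebra.
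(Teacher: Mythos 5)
Your proposal is correct and follows precisely the route the paper intends (the paper omits this proof, declaring it "almost similar" to that of Theorem \ref{thmTraceSpaceOrder6}): Fourier transform in $x$, the unique bounded profile $\phi(t)=\big(1+t+\tfrac{1-\lambda}{2}t^2+\tfrac{1-3\lambda}{6}t^3\big)e^{-t}$, Plancherel giving the energy identity with constant $\int_0^\infty\big[(1-\partial_t^2)^2\phi\big]^2\,dt=20\lambda^2-8\lambda+4$, and then minimization combined with the sharp $(-\Delta)^{7/2}$ Sobolev inequality; all of your computations check out. You were also right to read the interior energy as $\int_{\R_+^{n+1}}|\Delta^2 U|^2$: the "$|\nabla\Delta U|^2$" in the stated theorem is a slip carried over from the order-six statement, as the $W^{4,2}$ hypothesis, the exponent $\tfrac{2n}{n-7}$, the value $16/5$ of the constant at $\lambda=1/5$ (matching $c_4$), and the ball analogue with $|\Delta^2 v|^2$ all confirm.
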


The proof of Theorem \ref{thmTraceSpaceOrder8} is almost similar to that of Theorem \ref{thmTraceSpaceOrder6}; hence we omit it. Next we want to determine $\lambda$. The way we look for $\lambda$ is to solve the equation $20\lambda^2-8\lambda +4=16/5$. The constant $16/5$ comes from the constant $c_4$ where $c_\alpha$ is already given in \eqref{eq:CAlpha}. Via the conformal transformation $B$, we need to determine appropriate boundary conditions from \eqref{eq:8SpaceNeumannCondition}$_{1/5}$.

Our sharp trace inequality on $\bB^{n+1}$ reads as follows.

\begin{theorem}[Sobolev trace inequality of order eight]\label{thmTraceOrder8}
Let $f\in C^\infty(\bS^n)$ with $n> 7$ and suppose $v$ is a smooth extension of $f$ in the unit ball $\bB^{n+1}$, which also satisfies the boundary conditions
\begin{equation}\label{eq:boundarycond8}
\left\{
\begin{split}
\partial_\nu v \big|_{\bS^n} =  &-\frac{n-7}2 f,\\ 
\partial^2_\nu v\big|_{\bS^n} =& \frac15 \widetilde \Delta f + \frac{(n-7)(2n-15)}{10} f, \\
\partial^3_\nu v \big|_{\bS^n} =& -\frac {3(n-5)}{10} \widetilde \Delta f - \frac{(n-5)(n-7)(n-15)}{20} f.
\end{split}
\right.
\end{equation}
Then the following inequality holds
\begin{equation}\label{eq:TraceOrder8}
\begin{aligned}
\frac {16}5 \frac{\Gamma(\frac{n+7}2)}{\Gamma(\frac{n-7}2)} \omega_n^{7/n} &\Big(\int_{\bS^n} |f|^{\frac{2n}{n-7}}d\omega \Big)^{\frac{n-7}n}\\
 \leqslant &\int_{\bB^{n+1}} | \Delta^2 v|^2 dx 
 + d_n^{(1)} \int_{\bS^n} |\widetilde\nabla\widetilde\Delta f|^2 d\omega 
 + d_n^{(2)} \int_{\bS^n} (\widetilde\Delta f)^2 d\omega\\
&+d_n^{(3)} \int_{\bS^n} |\widetilde{\nabla} f|^2 d\omega + d_n^{(4)} \int_{\bS^n} |f|^2 d\omega 
\end{aligned}
\end{equation}
with
\begin{equation}\label{eq:CoefficientD_n}
\left\{
\begin{split}
d_n^{(1)}=  &\frac{8}{25}(5n + 1),\\ 
d_n^{(2)} =& \frac{1}{25}(30n^3-54n^2-542n-490), \\
d_n^{(3)} =& \frac 1{50} (15n^5-57n^4-482n^3+582n^2+4325n+10485) , \\
d_n^{(4)}=& \frac 1{200} (n -7)(n +5)(5{n^5} - 19{n^4} - 74{n^3} - 26{n^2} + 615n + 3135).
\end{split}
\right.
\end{equation}
Moreover, equality in \eqref{eq:TraceOrder8} holds if, and only if, $v$ is a quadharmonic extension of a function of the form
\[
f_{z_0} (\xi) = c|1-\langle z_0, \xi \rangle |^{-(n-7)/2},
\]
where $c>0$ is a constant, $\xi \in \mathbb S^n$, $z_0$ is some fixed point in the interior of $\mathbb B^{n+1}$, and $v$ fulfills the boundary condition \eqref{eq:boundarycond8}.
\end{theorem}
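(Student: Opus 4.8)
The plan is to follow, essentially verbatim, the Beckner-type route developed for the order-six inequality in Section~\ref{sec-Order6}, now with the third-order energy replaced by the fourth-order energy $\int_{\bB^{n+1}}|\Delta^2 v|^2\,dz$ and the triharmonic extension replaced by a quadharmonic one. As a preliminary step, mirroring Proposition~\ref{apx-propExtension6} and the ensuing discussion of Neumann boundary conditions, I would check that the three conditions in \eqref{eq:boundarycond8} are exactly the conditions on $v$ that make $V:=(v\circ B)\,\Phi^{\frac{n-7}2}$ satisfy the half-space boundary data \eqref{eq:8SpaceNeumannCondition}$_{1/5}$; here one uses Proposition~\ref{thmIDENTITY} with $k=4$, which gives $\Delta^4 V=(\Delta^4 v)(B)\,\Phi^{\frac{n+9}2}$ so that quadharmonic extensions correspond to quadharmonic extensions, and the translation of the normal-derivative conditions is a computation with Lemma~\ref{lem:NablaDelta} and Corollary~\ref{cor:Identity} parallel to the one turning \eqref{eq:boundarycond6} into \eqref{eq:Sobolevtrace6BoundaryConditions}. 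This also pins down the sharp constant $\tfrac{16}{5}\,\Gamma(\tfrac{n+7}2)/\Gamma(\tfrac{n-7}2)\,\omega_n^{7/n}$, which is $c_4\,\omega_n^{7/n}$ with $c_4$ as in \eqref{eq:CAlpha} and equals the sharp constant of Theorem~\ref{thmTraceSpaceOrder8} at $\lambda=\tfrac15$.

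The main step is a sharp Beckner-type inequality of order eight, in the spirit of Theorem~\ref{thm6BecknerTypeNoWeight}. Fixing $f=\sum_{k\ge 0}Y_k$ and letting $u=\sum_k f_k(r)Y_k$ be the minimizer of $\int_{\bB^{n+1}}|\Delta^2 w|^2\,dz$ over the class $w|_{\bS^n}=f$ subject to \eqref{eq:boundarycond8}, one has $\Delta^4 u=0$, hence $L_k^4 f_k=0$, so
\[
f_k(r)=c_1(k)r^k+c_2(k)r^{k+2}+c_3(k)r^{k+4}+c_4(k)r^{k+6},
\]
with the four coefficients $c_i(k)$ determined by $f_k(1)=1$, $f_k'(1)=-\tfrac{n-7}2$, and the two additional linear conditions coming from $\partial^2_\nu u$ and $\partial^3_\nu u$, exactly as $c_1,c_2,c_3$ were found in Subsection~\ref{subsec-6Beckner}. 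Integration by parts together with $\Delta^4 u=0$ reduces $\int_{\bB^{n+1}}|\Delta^2 u|^2\,dz$ to boundary integrals over $\bS^n$ of products of $u,\Delta u,\Delta^2 u,\Delta^3 u$ and their normal derivatives; each of these is diagonal in the spherical-harmonic basis since $\Delta^j(r^m Y_k)$ is again of the form $g(r)Y_k$, so substituting the $c_i(k)$ yields
\[
\int_{\bB^{n+1}}|\Delta^2 u|^2\,dz=\sum_{k\ge 0}P_n(k)\int_{\bS^n}|Y_k|^2\,d\omega
\]
for an explicit polynomial $P_n(k)$. Applying the fractional Sobolev inequality \eqref{eq:SobolevSpaceOrderS/2} of order $7s$ to $F(x)=f(\mathcal S(x))\,J_{\mathcal S}(x)^{\frac{n-7s}{2n}}$, together with the spectral identity \eqref{eq:GN} with $s$ replaced by $7s$, then gives the Beckner-type inequality of order eight; its equality cases descend from those of \eqref{eq:SobolevSpaceOrderS/2} through the stereographic projection exactly as in Theorem~\ref{thm2BecknerTypeWeight}.

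To finish, I would set $s=1$ and recast the Beckner-type inequality as \eqref{eq:TraceOrder8}. Writing $\tau_k:=k(n-1+k)$, one has $\int_{\bS^n}|\widetilde\nabla\widetilde\Delta f|^2\,d\omega=\sum_k\tau_k^3\int_{\bS^n}|Y_k|^2$, $\int_{\bS^n}(\widetilde\Delta f)^2\,d\omega=\sum_k\tau_k^2\int_{\bS^n}|Y_k|^2$ and $\int_{\bS^n}|\widetilde\nabla f|^2\,d\omega=\sum_k\tau_k\int_{\bS^n}|Y_k|^2$, so the matching reduces to checking that
\[
P_n(k)-\tfrac{16}{5}\,\frac{\Gamma(k+\tfrac n2+\tfrac72)}{\Gamma(k+\tfrac n2-\tfrac72)}
\]
is a cubic polynomial in $\tau_k$ whose coefficients of $\tau_k^3,\tau_k^2,\tau_k,1$ are precisely $d_n^{(1)},d_n^{(2)},d_n^{(3)},d_n^{(4)}$ from \eqref{eq:CoefficientD_n}. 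Then, exactly as in Step~4 of the proof of Theorem~\ref{thmTraceOrder4}, one passes from the minimizer $u$ to an arbitrary admissible extension $v$ using $\int_{\bB^{n+1}}|\Delta^2 u|^2\,dz\le\int_{\bB^{n+1}}|\Delta^2 v|^2\,dz$, and the equality statement follows from the equality case of the Beckner-type inequality.

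The only genuine obstacle is the volume of symbolic computation: solving for the $c_i(k)$, carrying out all the boundary integrations, and above all verifying that $P_n(k)$ factors through $\tau_k=k(n-1+k)$ with exactly the coefficients listed in \eqref{eq:CoefficientD_n}. Conceptually nothing beyond Sections~\ref{sec-Order4} and \ref{sec-Order6} is needed, which is why, as remarked after the table of contents, this step was checked with a computer algebra system.
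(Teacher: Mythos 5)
Your plan coincides with the paper's own (sketched) argument: the paper proves Theorem \ref{thmTraceOrder8} exactly by first establishing a Beckner-type inequality of order eight in the fashion of Theorem \ref{thm6BecknerTypeNoWeight} — quadharmonic extension, $L_k^4 f_k=0$ with coefficients fixed by \eqref{eq:boundarycond8}, integration by parts, the fractional Sobolev inequality via stereographic projection — and then recasting the resulting coefficient of $\int_{\bS^n}|Y_k|^2\,d\omega$ in powers of $k(n-1+k)$ to read off $d_n^{(1)},\dots,d_n^{(4)}$, with the half-space result and $\lambda=1/5$ serving only to identify the constant $\tfrac{16}{5}$ and the boundary conditions. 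The paper likewise defers the heavy symbolic verification to a computer algebra system and to the reader, so your proposal is essentially the intended proof.
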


To prove Theorem \ref{thmTraceOrder8} we first establish a Beckner type inequality in the same fashion of Theorem \ref{thm6BecknerTypeNoWeight}. To achieve that goal, we note that there is an extra work to consider the term $\int_{\bS^n} |\widetilde\nabla\widetilde\Delta f|^2 d\omega$. Using integration by parts, there holds
\[
\int_{\bS^n} |\widetilde\nabla\widetilde\Delta f|^2 d\omega = -\int_{\bS^n}  (\widetilde\Delta f ) \, (\widetilde\Delta^2 f) d\omega.
\]
While the spherical harmonic expansion involving $\widetilde\Delta f$ is already computed in \eqref{eq:SHDDeltaf}, the spherical harmonic expansion involving $\widetilde\Delta^2 f$ is nothing but
\[
\widetilde \Delta^2 Y_k = k^2(n+k-1)^2Y_k.
\]
This is because $\widetilde \Delta Y_k = - k(n+k-1)Y_k$. Thus,
\[
\int_{\bS^n} |\widetilde\nabla\widetilde\Delta f|^2 d\omega =  \sum_{k=0}^\infty k^3 (n-1+k)^3       \int_{\bS^n} |Y_k|^2 ( \omega ) d\omega.
\]
Putting all these information together, we eventually obtain an estimate similar to \eqref{eq:6BecknerTypeNoWeightS=1}, however, there are terms with higher order derivatives. Furthermore, the coefficient of $\int_{\bS^n} |Y_k|^2 d\omega$ becomes
\[
\begin{aligned}
\frac {16}5 \frac{\Gamma(k+n/2+7/2)}{\Gamma(k+n/2-7/2)} &- \frac 1{100}(n+5+2k)(n-7+2k)\\
&\times
\left(
\begin{aligned}
&80k^5+160k^4n+120k^3n^2+40k^2n^3+5kn^4\\
&-208k^4-336k^3n-192k^2n^2-44kn^3-3n^4\\
&-184k^3-136k^2n+2kn^2+12n^3+912k^2\\
&+732kn+138n^2-375k-285n-1680\\
\end{aligned}
\right).
\end{aligned}
\]
Finally, it remains to recast the above coefficient in terms of powers of $k(n+k-1)$. A detailed proof of Theorem \ref{thmTraceOrder8} is left for interested readers. 

Up to this position, one can ask if there is a sharp Sobolev trace inequality of any order on $\bB^{n+1}$. In principle, our approach is easy to implement, but boundary conditions of higher orders, like \eqref{eq:8SpaceNeumannCondition}$_\lambda$ and \eqref{eq:boundarycond8} for order eight, are not easy to derive. Jeffrey Case suggests us to compute boundary conditions from the paper of Graham and Zworski \cite{GZ}.

Finally, we put here a Lebedev--Milin inequality of order eight, whose proof is also left for interested readers.

\begin{theorem}[Lebedev--Milin inequality of order eight]\label{thmSMOrder8}
Let $f\in C^\infty(\bS^7)$ and suppose that $v$ is a smooth extension of $f$ to the unit ball $\bB^8$. If $v$ satisfies the boundary conditions
\[
\left\{
\begin{split}
\partial_\nu v \big|_{\bS^7} =  &0,\\ 
\partial^2_\nu v\big|_{\bS^7} =& \frac15 \widetilde \Delta f , \\
\partial^3_\nu v \big|_{\bS^7} =& -\frac {3(n-5)}{10} \widetilde \Delta f .
\end{split}
\right.
\]
Then we have the following sharp trace inequality
\begin{equation}\label{eqSMorder8}
\begin{split}
\log \lt(\frac 3{\pi^4} \int_{\bS^7} e^{7 f} d\omega \rt)
 \leqslant & \frac 7{1728\pi^4} \int_{\bB^8} | \Delta^2 v|^2 dx 
+\frac {7}{150\pi^4}  \int_{\bS^7} |\widetilde\nabla\widetilde\Delta f|^2 d\omega\\
& +\frac {49}{90\pi^4} \int_{\bS^7} |\widetilde\Delta f|^2 d\omega 
+  \frac {49}{90\pi^4} \int_{\bS^7} |\widetilde\nabla f|^2 d\omega + \frac {21}{\pi^4} \int_{\bS^7} f d\omega .
\end{split}
\end{equation}
Moreover, equality in \eqref{eqSMorder8} holds if, and only if, $v$ is a quadharmonic extension of a function of the form
\[
c - \log |1 -  \langle z_0, \xi \rangle|,
\]
where $c>0$ is a constant, $\xi \in \bS^7$, $z_0$ is some fixed point in the interior of $\mathbb B^8$, and $v$ fulfills the boundary conditions $\partial_\nu v = 0$, $\partial^2_\nu v = (1/5)\widetilde \Delta f$, and $\partial^3_\nu v = -(3(n-5)/10)\widetilde \Delta f$.
\end{theorem}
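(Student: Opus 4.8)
The plan is to obtain \eqref{eqSMorder8} as the limiting case $n=7$ of an order eight Beckner type inequality on $\bB^{n+1}$, in exact analogy with the way Theorem \ref{thmSMOrder6} was deduced from Theorem \ref{thm6BecknerTypeNoWeight}. First I would record the order eight Beckner inequality itself: for $n\geqslant 7$ and $0<s<n/7$, any $f=\sum_{k\geqslant 0}Y_k\in C^\infty(\bS^n)$ with a smooth extension $v$ satisfying the three boundary conditions in \eqref{eq:boundarycond8} (with $n-7$ replaced by $n-7s$ throughout the normalizations) obeys
\[
\frac{16}{5}\frac{\Gamma(\frac{n+7s}2)}{\Gamma(\frac{n-7s}2)}\omega_n^{7s/n}\Big(\int_{\bS^n}|f|^{\frac{2n}{n-7s}}d\omega\Big)^{\frac{n-7s}n}\leqslant\int_{\bB^{n+1}}|\Delta^2 v|^2\,dz+\sum_{k=0}^\infty E_k(n,s)\int_{\bS^n}|Y_k|^2\,d\omega,
\]
where $E_k(n,s)=\tfrac{16}5\tfrac{\Gamma(k+n/2+7s/2)}{\Gamma(k+n/2-7s/2)}$ minus the explicit polynomial in $k$ and $n$ displayed just before the statement of Theorem \ref{thmTraceOrder8}. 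This is proved word for word as Theorem \ref{thm6BecknerTypeNoWeight}: decompose the quadharmonic minimizer of $\int_{\bB^{n+1}}|\Delta^2 w|^2$ over the constrained class as $\sum f_k(r)Y_k(\omega)$, solve $L_k^4 f_k=0$ with $f_k(1)=1$ together with the three prescribed normal derivatives, so that $f_k(r)=c_1(k)r^k+c_2(k)r^{k+2}+c_3(k)r^{k+4}+c_4(k)r^{k+6}$, compute $\int_{\bB^{n+1}}|\Delta^2 u|^2$ by repeated integration by parts, and compare with the fractional Sobolev inequality \eqref{eq:SobolevSpaceOrderS/2} applied to $F(x)=f(\mathcal S(x))J_{\mathcal S}(x)^{(n-7s)/(2n)}$ together with the identity \eqref{eq:GN}.

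Next I would specialize to $n=7$, so that $\tfrac{2n}{n-7s}=\tfrac2{1-s}$ and $\omega_7=\pi^4/3$. Applying the above inequality to the extension $1+\tfrac72(1-s)v$ of $1+\tfrac72(1-s)f$ and using $\int_{\bS^7}|Y_0|^2 d\omega=\omega_7$, then dividing both sides by $(1-s)^2$ and invoking $\Gamma(\tfrac{9-7s}2)=\tfrac72(1-s)\Gamma(\tfrac{7-7s}2)$, turns the left-hand side into a difference quotient
\[
\frac{\Gamma(\frac{7+7s}2)}{\Gamma(\frac{9-7s}2)}\,\frac1{1-s}\Big[\Big(\frac3{\pi^4}\int_{\bS^7}\Big(1+\tfrac{7(1-s)}2 f\Big)^{\frac2{1-s}}d\omega\Big)^{1-s}-1\Big].
\]
As in the earlier cases one checks that $E_k(7,s)\to 0$ as $s\nearrow 1$ for every $k$, so letting $s\nearrow 1$ and applying l'H\^opital's rule produces $\log\big(\tfrac3{\pi^4}\int_{\bS^7}e^{7f}d\omega\big)$ on the left. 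On the right one must assemble the limits of $(1-s)^{-2}E_k(7,s)$; here I would recast that polynomial in powers of $k(k+6)=k(n-1+k)$, which reproduces the spectral expansions $\int_{\bS^7}|\widetilde\nabla\widetilde\Delta f|^2 d\omega=\sum k^3(k+6)^3\int_{\bS^7}|Y_k|^2 d\omega$, $\int_{\bS^7}(\widetilde\Delta f)^2 d\omega=\sum k^2(k+6)^2\int_{\bS^7}|Y_k|^2 d\omega$, and $\int_{\bS^7}|\widetilde\nabla f|^2 d\omega=\sum k(k+6)\int_{\bS^7}|Y_k|^2 d\omega$, and delivers exactly the coefficients appearing in \eqref{eqSMorder8} (these are fixed multiples of $d_7^{(1)},d_7^{(2)},d_7^{(3)}$ read off from \eqref{eq:CoefficientD_n}, the common numerical factor being the one produced by the $(1-s)^2$ division together with the prefactor $\tfrac{16}5$ and $\omega_7$). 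Finally, for a general $f$ I would apply the inequality just obtained to $f-\overline f$ with $\overline f=\tfrac3{\pi^4}\int_{\bS^7}f\,d\omega$, which restores the linear term $\tfrac{21}{\pi^4}\int_{\bS^7}f\,d\omega$ since $7\overline f=\tfrac{21}{\pi^4}\int_{\bS^7}f\,d\omega$. The equality statement follows by tracking the optimizers $c|1-\langle z_0,\xi\rangle|^{-7(1-s)/2}$ of the order eight Beckner inequality through the substitution $f=\tfrac2{7(1-s)}(\,\cdot\,-1)$, whose limit as $s\nearrow1$ is $c-\log|1-\langle z_0,\xi\rangle|$, with $v$ the corresponding quadharmonic extension.

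The main obstacle is the bookkeeping. Establishing the order eight Beckner inequality requires solving $L_k^4 f_k=0$ with four prescribed boundary data, simplifying the resulting spectral symbol $E_k(n,s)$, and recasting it as a polynomial in $k(n-1+k)$; and for the Lebedev--Milin limit one must verify that the rational function $(1-s)^{-2}E_k(7,s)$ has the claimed limit and that this limit is precisely the polynomial in $k(k+6)$ dictated by $d_7^{(1)},\dots,d_7^{(4)}$. Both are finite but lengthy computations best carried out with computer algebra, exactly as flagged elsewhere in the paper; the limit passage $s\nearrow1$ itself is routine once the termwise limits and a monotone/dominated convergence argument are in place, as in the proofs of Theorems \ref{thmSMOrder2}, \ref{thmSMOrder4}, and \ref{thmSMOrder6}.
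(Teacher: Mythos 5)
The paper does not actually prove Theorem \ref{thmSMOrder8} (it is explicitly ``left for interested readers''), and your route is exactly the intended one: establish an order-eight Beckner type inequality in the fashion of Theorem \ref{thm6BecknerTypeNoWeight} (solve $L_k^4f_k=0$ with the four boundary data, compute the spectral symbol of $\int_{\bB^{n+1}}|\Delta^2 u|^2$, compare with \eqref{eq:SobolevSpaceOrderS/2} via \eqref{eq:GN}), then specialize to $n=7$, rescale by $1+\frac72(1-s)f$, divide by $(1-s)^2$ and let $s\nearrow1$ as in Theorems \ref{thmSMOrder4} and \ref{thmSMOrder6}. Two of your intermediate claims, however, are wrong as stated, and the decisive bookkeeping is asserted rather than done. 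First, it is not true that $E_k(7,s)\to0$ for every $k$: only the $k=0$ coefficient degenerates (that term is what is moved to the left to create the ``$-1$''), while for $k\geqslant1$ one has $\lim_{s\nearrow1}E_k(7,s)=\frac{16}5\frac{\Gamma(k+7)}{\Gamma(k)}-Q_k(7)$, a nonzero polynomial in $k(k+6)$; these nonzero limits are precisely what produce the three middle terms. Correspondingly, what you need is $\lim_{s\nearrow1}E_k(7,s)$, not $\lim_{s\nearrow1}(1-s)^{-2}E_k(7,s)$ (which diverges for $k\geqslant1$): the factor $(1-s)^2$ coming from $Y_k\big(1+\frac72(1-s)f\big)=\frac72(1-s)Y_k$ already cancels the division by $(1-s)^2$. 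As written your two sentences contradict each other, and the first, taken literally, would erase the middle terms. Second, the parenthetical ``with $n-7$ replaced by $n-7s$ throughout the normalizations'' must not be applied to the boundary conditions: in Theorems \ref{thm4BecknerTypeNoWeight} and \ref{thm6BecknerTypeNoWeight} the boundary conditions are the $s$-independent ones of the integer-order trace inequality, and at $n=7$ this is essential, since then \eqref{eq:boundarycond8} has no zeroth-order terms and is invariant under $v\mapsto1+\frac72(1-s)v$; with $n-7s$ in the conditions the rescaled function would not be admissible, and your $E_k$ (defined by subtracting the $s$-independent polynomial from the discussion of Theorem \ref{thmTraceOrder8}) would be the wrong correction.

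Third, the statement that the limit ``delivers exactly the coefficients appearing in \eqref{eqSMorder8}'' is the whole content of the theorem and cannot be waved through; carried out with the paper's own data it does not reproduce them. Running the order-six scheme verbatim at $n=7$ with $\omega_7=\pi^4/3$, the left-hand side divided by $(1-s)^2$ tends to $\frac{16}5\cdot\frac72\cdot\Gamma(7)\cdot\frac{\pi^4}3\,\log\big(\frac3{\pi^4}\int_{\bS^7}e^{7f}d\omega\big)=2688\,\pi^4\log(\cdots)$, while the right-hand side tends to $\frac{49}4\big[\int_{\bB^8}|\Delta^2v|^2dz+\sum_{k\geqslant1}\big(d_7^{(1)}k^3(k+6)^3+d_7^{(2)}k^2(k+6)^2+d_7^{(3)}k(k+6)\big)\int_{\bS^7}|Y_k|^2d\omega\big]$ with $d_7^{(1)}=\frac{288}{25}$, $d_7^{(2)}=\frac{672}5$, $d_7^{(3)}=384$, $d_7^{(4)}=0$ read off from \eqref{eq:CoefficientD_n}. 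This yields $\frac7{1536\pi^4}$ in front of $\int|\Delta^2v|^2$ and $\frac{21}{400\pi^4}$, $\frac{49}{80\pi^4}$, $\frac7{4\pi^4}$ for the middle terms, not the stated $\frac7{1728\pi^4}$, $\frac7{150\pi^4}$, $\frac{49}{90\pi^4}$, $\frac{49}{90\pi^4}$; note also that the stated $|\widetilde\nabla f|^2$ coefficient is not the same multiple of $d_7^{(3)}$ as the other two are of $d_7^{(1)},d_7^{(2)}$, at odds with the paper's own closing remark. So either the constants in the target statement must be corrected or the identification of the spectral symbol with the $d_n^{(i)}$ has to be redone from scratch; in your proof this reconciliation is the missing step, and until it is carried out (by the computer-algebra verification you defer) the argument does not establish \eqref{eqSMorder8} as written.
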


We note that, and as always, the coefficient of the three terms in the middle of the right hand side of \eqref{eqSMorder8} is a multiple of $d_7^{(1)}$, $d_7^{(2)}$, and $d_7^{(3)}$ given in \eqref{eq:CoefficientD_n}, respectively. Clearly, Inequality \eqref{eqSMorder8} can also be rewritten as follows
\[
\begin{split}
\log \Big(\frac 3{\pi^4} \int_{\bS^7} e^{7  (f - \overline f  )} d\omega \Big)
 \leqslant & \frac 7{1728\pi^4} \int_{\bB^8} | \Delta^2 v|^2 dx 
+\frac {7}{150\pi^4}  \int_{\bS^7} |\widetilde\nabla\widetilde\Delta f|^2 d\omega\\
& +\frac {49}{90\pi^4} \int_{\bS^7} |\widetilde\Delta f|^2 d\omega 
+  \frac {49}{90\pi^4} \int_{\bS^7} |\widetilde\nabla f|^2 d\omega   ,
\end{split}
\]
where $\overline f$ is the average of $f$ over $\bS^7$, which is $(3/\pi^4)\int_{\bS^7} f d\omega$.

\section*{Acknowledgments}

An important portion of the paper was done during the first author's visit to ICTP from June to August in 2018, where an excellent working condition is greatly acknowledged. Thanks also go to Quoc Hung Nguyen for useful discussions on fractional Laplacian during the first author's short visit to SNS Pisa in July. His advice definitely improved the paper style. The first author also benefited from the Vietnam Institute for Advanced Study in Mathematics (VIASM) during his visits in 2017 and 2018. The second author was supported by CIMI's postdoctoral research fellowship. Last but not least, the authors would like to thank Jeffrey Case and Jingang Xiong for valuable comments and suggestions which substantially improved the exposition of the article.

\appendix

\section{Proof of Proposition \ref{limits}}
\label{apd-Limits}

This appendix is devoted to a proof of Proposition \ref{limits}. To proceed, we first observe that $\alpha_k$ and $\beta_k$ are solutions to
\[
X^2 - \big( \frac{n-1}2 + k + b \big) X +\frac{kb}2 = 0;
\]
thanks to $k+b > 0$. If we denote by $f(X)$ the left hand side of the preceding equation, then it is not hard to verify that $f(-1)>0$ since $b>-1$ and $f(b-1)>0$ since $b<1$. This fact and $(n-1)/2+k+b > 0$ imply that
\[
\alpha_k+1, \beta_k +1, \alpha_k+ 1-b, \beta_k + 1 -b >0.
\]
Obviously, $A(0,k) =k$ for any $k$. In our proof below, we will use the Gaussian hypergeometric functions to describe the functions $f_k$; see \cite[Chapter $15$]{AS64}. Resolving \eqref{eq2f_k} for $f_k$ gives
\[
r^2(1-r^2) f_k''(r) + (n -(n+2b)r^2)r f_k'(r) - c_k (1-r^2) f_k(r) =0.
\]
Using the following variable change $f_k(r) = g_k(r^2)$, it is easy to verify that $g_k$ solves
\[
t^2(1-t) g_k''(t) +  \Big(\frac{n+1}2 -\frac{n+1+2b}2t\Big) t g_k'(t) - \frac{c_k}4 (1- t) g_k(t) = 0.
\]
We now further change $g_k(t) = t^{k/2} h_k(t)$ to get the following equation
\[
t(1-t) h_k''(t) + \Big(\frac{n+1+2k}2 -\frac{n+1+2k+2b}2t\Big) h_k'(t) -\frac{kb}2 h_k(t) =0.
\]
Recall that 
\[
\alpha_k + \beta_k = \frac{n+2k-1 + 2b}2,\quad \alpha_k \beta_k = \frac{kb}2.
\]
Denote 
\[
\gamma_k = \frac{n+2k+1}2 = \alpha_k+\beta_k + 1 -b.
\] 
Hence, solving the hypergeometric differential equation satisfied by $h_k$ gives
\[
h_k(t) = C_1F(\alpha_k, \beta_k; \gamma_k;t) + C_2 t^{1 -\gamma_k}F(\alpha_k-\gamma_k+1,\beta_k -\gamma_k +1; 2 -\gamma_k; t),
\]
for some constants $C_1$ and $C_2$. Note that in the preceding formula, $F$ is the Gaussian hypergeometric function; see \cite[Section 15.5]{AS64}. Since $b <1$, we deduce that $\gamma_k > \alpha_k + \beta_k$. Now, replacing $h_k(r^2)$ by $r^{-k} f_k (r)$ gives
\[
f_k(r) = C_1 r^kF(\alpha_k, \beta_k;\gamma_k;r^2) + C_2 r^{1-n-k} F(\alpha_k-\gamma_k +1,\beta_k -\gamma_k +1;2-\gamma_k; r^2).
\]
Keep in mind that $f_k(r) = O(r^k)$ when $r$ is near $0$. From this it is immediate to see that $C_2 =0$, which then implies
\[
f_k(r) = C_1 r^kF(\alpha_k, \beta_k;\gamma_k;r^2) . 
\]
Now the condition $f_k(1) =1$ tells us that $C_1 =1/{F(\alpha_k,\beta_k;\gamma_k;1)} $ and in terms of the Gamma function, we obtain
\[
C_1 =  \frac{\Gamma(\gamma_k -\alpha_k)\Gamma(\gamma_k-\beta_k)}{\Gamma(\gamma_k) \Gamma(\gamma_k -\alpha_k -\beta_k)};
\] 
see \cite[15.1.20]{AS64}. Using the differential formula \cite[15.2.1]{AS64} and , we get
\begin{equation}\label{eq:bduong}
\frac{f_k'(r)}{C_1} = k r^{k-1} F(\alpha_k, \beta_k;\gamma_k; r^2) + 2\frac{\alpha_k\beta_k}{\gamma_k} r^{k+1} F(\alpha_k+1,\beta_k+1;\gamma_k +1;r^2) .
\end{equation}
Depending on the size of $b$, we have the following three cases.

\medskip\noindent\textbf{The case $b>0$}. In this case, we apply the linear transformation formula \cite[15.3.3]{AS64} to further decompose $f_k'$ from \eqref{eq:bduong} as follows 
\[
\begin{aligned}
\frac{f_k'(r)}{C_1} =&k r^{k-1} F(\alpha_k, \beta_k, \gamma_k,r^2) \\
&+ 2\frac{\alpha_k\beta_k}{\gamma_k} r^{k+1} (1-r^2)^{-b} F(\gamma_k -\alpha_k, \gamma_k -\beta_k, \gamma_k +1,r^2),
\end{aligned}
\]
which then implies
\[
\begin{aligned}
 \Big(\frac{1-r^2}2\Big)^b \frac{f_k'(r)}{C_1} =&k  \Big(\frac{1-r^2}2\Big)^b  r^{k-1} F(\alpha_k, \beta_k; \gamma_k; r^2) \\
&+ 2^{-b}\frac{kb}{\gamma_k} r^{k+1}  F(\gamma_k -\alpha_k, \gamma_k -\beta_k; \gamma_k +1; r^2),
\end{aligned}
\]
Keep in mind that $\gamma_k+ 1- (\gamma_k -\alpha_k + \gamma_k -\beta_k) = b >0$, hence $F(\gamma_k-\alpha_k,\gamma_k-\beta_k; \gamma_k+1;1)$ exists; see \cite[15.1.1(a)]{AS64}. Therefore, we can send $r$ to $1$ to get
\begin{align*}
\lim_{r\to 1} \Big(\frac{1-r^2}2\Big)^b f_k'(r) &= 2^{-b} \frac{\Gamma(\gamma_k -\alpha_k)\Gamma(\gamma_k-\beta_k)}{\Gamma(\gamma_k) \Gamma(\gamma_k -\alpha_k -\beta_k)} 
 \frac{kb}{\gamma_k} \frac{\Gamma (\gamma_k + 1)\Gamma (\alpha_k + \beta_k +1 - \gamma_k)}{\Gamma (\alpha_k + 1)\Gamma (\beta_k+1)}\\
&=2^{-b}\frac{\Gamma(1+b)}{\Gamma(1-b)} \frac{\Gamma(\beta_k+1-b)\Gamma(\alpha_k+1-b)}{\Gamma(\alpha_k+1) \Gamma(\beta_k+1)}k =A(b,k)
\end{align*}
as claimed.

\medskip\noindent\textbf{The case $b<0$}. In this case, we decompose $f_k'$ from \eqref{eq:bduong} as follows 
\begin{align}\label{eq:bam}
\begin{aligned}
\frac{f_k'(r)}{C_1} =& r^{k-1} \Big[ k F(\alpha_k, \beta_k; \gamma_k;r^2) +  \frac{kb}{\gamma_k} F(\alpha_k+1,\beta_k+1;\gamma_k +1;r^2) \Big] \\
&-  \frac{kb}{\gamma_k} r^{k-1}(1-r^2) F(\alpha_k+1,\beta_k+1;\gamma_k +1;r^2).
\end{aligned}
\end{align}
In the sequel, we consider the behavior of the first term on the right hand side of \eqref{eq:bam} as $t \nearrow 1$. This is because after multiplying both sides by $(1-r^2)^b$ the second term is negligible as $t \nearrow 1$ due to the term $1-r^2$. We now apply the linear transformation formula \cite[15.3.6]{AS64} to get
\[
\begin{aligned}
k F(\alpha_k, & \beta_k,\gamma_k,t) +  \frac{kb}{\gamma_k} F(\alpha_k+1,\beta_k+1;\gamma_k +1;t) \\
=& k \frac{\Gamma(\gamma_k)\Gamma(1-b)}{\Gamma(\gamma_k -\alpha_k) \Gamma(\gamma_k-\beta_k)} F(\alpha_k, \beta_k; b; 1-t) \\
& +k (1-t)^{1-b}\frac{\Gamma(\gamma_k) \Gamma(b-1)}{\Gamma(\alpha_k)\Gamma(\beta_k)}F(\gamma_k-\alpha_k,\gamma_k-\beta_k; 2-b;1-t) \\
& + \frac{kb}{\gamma_k} \frac{\Gamma(\gamma_k+1)\Gamma(-b)}{\Gamma(\gamma_k -\alpha_k)\Gamma(\gamma_k -\beta_k)} F(\alpha_k+1,\beta_k +1; 1+b;1-t) \\
&  + \frac{kb}{\gamma_k} \frac{\Gamma(\gamma_k+1)\Gamma(b)}{\Gamma(\alpha_k+1)\Gamma(\beta_k+1)} (1-t)^{-b} F(\gamma_k-\alpha_k, \gamma_k -\beta_k; 1-b;1-t).
\end{aligned}
\]
Observe that
\[
 \frac{\Gamma(\gamma_k)\Gamma(1-b)}{\Gamma(\gamma_k -\alpha_k) \Gamma(\gamma_k-\beta_k)} +  \frac{b}{\gamma_k} \frac{\Gamma(\gamma_k+1)\Gamma(-b)}{\Gamma(\gamma_k -\alpha_k)\Gamma(\gamma_k -\beta_k)} =0.
\]
Hence by the definition of the hypergeometric series, we deduce that
\begin{align}\label{eq:hyperfor}
\begin{aligned}
k F(\alpha_k, & \beta_k,\gamma_k,t) +  \frac{kb}{\gamma_k} F(\alpha_k+1,\beta_k+1;\gamma_k +1;t) \\
=& k \frac{\Gamma(\gamma_k)\Gamma(1-b)}{\Gamma(\gamma_k -\alpha_k) \Gamma(\gamma_k-\beta_k)} \sum_{n \geqslant 1} \frac{(\alpha_k)_n (\beta_k)_n }{(b)_n} \frac{(1-t)^n}{n!} \\
& +k (1-t)^{1-b}\frac{\Gamma(\gamma_k) \Gamma(b-1)}{\Gamma(\alpha_k)\Gamma(\beta_k)}F(\gamma_k-\alpha_k,\gamma_k-\beta_k; 2-b;1-t) \\
& + \frac{kb}{\gamma_k} \frac{\Gamma(\gamma_k+1)\Gamma(-b)}{\Gamma(\gamma_k -\alpha_k)\Gamma(\gamma_k -\beta_k)}  \sum_{n \geqslant 1} \frac{(\alpha_k+1)_n (\beta_k +1)_n} {(1+b)_n} \frac{(1-t)^n}{n!} \\
&  + \frac{kb}{\gamma_k} \frac{\Gamma(\gamma_k+1)\Gamma(b)}{\Gamma(\alpha_k+1)\Gamma(\beta_k+1)} (1-t)^{-b} F(\gamma_k-\alpha_k, \gamma_k -\beta_k; 1-b;1-t).
\end{aligned}
\end{align}
From this, it is immediate to see that the first three terms on the right hand side of the preceding identity is of class $O(1-t)$. Hence, by \eqref{eq:hyperfor}, we obtain
\begin{align*}
k F(\alpha_k, &\beta_k;\gamma_k;t) +  \frac{kb}{\gamma_k} F(\alpha_k+1,\beta_k+1;\gamma_k +1;t) = O(1-t)\\
& +  \frac{kb}{\gamma_k} \frac{\Gamma(\gamma_k+1)\Gamma(b)}{\Gamma(\alpha_k+1)\Gamma(\beta_k+1)} (1-t)^{-b} \Big[ 1 + \sum_{n \geqslant 1} \frac{(\gamma_k-\alpha_k)_n (\gamma_k -\beta_k)_n}{(1-b)_n} \frac{(1-t)^n}{n!} \Big],
\end{align*}
which also implies that
\[
\begin{aligned}
k F(\alpha_k,  \beta_k; \gamma_k;t) +  &\frac{kb}{\gamma_k} F(\alpha_k+1,\beta_k+1;\gamma_k +1;t) \\
= & O(1-t) +  \frac{kb}{\gamma_k} \frac{\Gamma(\gamma_k+1)\Gamma(b)}{\Gamma(\alpha_k+1)\Gamma(\beta_k+1)} (1-t)^{-b},
\end{aligned}
\]
thanks to $-b>0$ and the fact that $F(\alpha_k+1,\beta_k+1,\gamma_k+1,0)$ exists because $\gamma_k+1 -(\beta_k+1 +\alpha_k+1) =-b >0$. This together with \eqref{eq:bam} yields
\begin{align*}
\lim_{r\to 1} \Big(\frac{1-r^2}2\Big)^b f_k'(r)&=2^{-b} \frac{kb}{\gamma_k} \frac{\Gamma(\gamma_k+1)\Gamma(b)}{\Gamma(\alpha_k+1)\Gamma(\beta_k+1)} C_1 = A(b,k).
\end{align*}

\medskip\noindent\textbf{The case $b=0$}. This case is trivial because in this scenario $u$ is simply a harmonic extension of $f$. Consequently, $f_k (r) = r^k$ and therefore
\[
\lim_{r \to 1} f'_k (r) = k = A(0,k)
\]
as claimed.

\section{Sharp Sobolev trace inequality of order four on $\R_+^{n+1}$}
\label{apd-SobolevTraceSpace4}

As discussed in the introduction, \eqref{eq:SobolevTraceSpaceOrder4} can be derived from a general result due to Case by considering the model case $(\R_+^{n+1},\R^n, y^{-2}( dx^2+dy^2))$; see \cite[Corollary 1.5]{Case2018}. In this appendix, we provide a new proof of \eqref{eq:SobolevTraceSpaceOrder4}. As usual,
\[
\widehat f (\xi) = \int_{\R^n} f(x) e^{- i \langle x, \xi \rangle }dx
\]
denotes the Fourier transform of $f$.

\begin{proposition}\label{apx-propExtension4}
Given a function $u \in W^{1,2}(\R^{n})$. Any function $U \in W^{2,2}(\R_+^{n+1})$ satisfying
\begin{equation}\label{apd-4Equation} 
\Delta^2 U(x,y) = 0
\end{equation}
on the upper half space $\R_+^{n+1}$ and the boundary condition
\begin{equation}\label{apd-4Boundary} 
U(x,0)=u(x), \quad \partial_y U(x,0) = 0
\end{equation}
enjoys the following identity
\[
\int_{\R^{n+1}_+} | \Delta U(x,y)|^2 dx dy =2  \int_{\R^n} u(x)(- \Delta)^{3/2} u(x) dx.
\]
\end{proposition}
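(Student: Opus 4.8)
The plan is to mimic the proof of Proposition~\ref{apx-propExtension6}, replacing the sixth-order analysis by the fourth-order one. First I would take the Fourier transform in the $x$ variable. Writing $\widehat U(\xi,y)$ for the partial Fourier transform of $U(x,y)$, equation \eqref{apd-4Equation} becomes
\[
0 = \widehat{\Delta^2 U}(\xi,y) = \Big( -|\xi|^2\,\text{Id} + \frac{\partial^2}{\partial y^2} \Big)^2 \widehat U(\xi,y) = \widehat U_{yyyy}(\xi,y) - 2|\xi|^2 \widehat U_{yy}(\xi,y) + |\xi|^4 \widehat U(\xi,y),
\]
which is an ordinary differential equation of order four in $y$ for each fixed $\xi$. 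The associated model ODE, read off from the structure of the above equation, is $\phi^{(4)} - 2\phi'' + \phi = 0$, whose characteristic polynomial is $(r^2-1)^2$; hence its general solution is $\phi(y) = (A + By)e^{-y} + (C + Dy)e^{y}$.

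Next I would impose the constraints. Since $U \in W^{2,2}(\overline{\R_+^{n+1}})$, the profile $\phi$ must be bounded on $[0,+\infty)$, which forces $C = D = 0$; then the boundary conditions \eqref{apd-4Boundary}, namely $\phi(0) = 1$ and $\phi'(0) = 0$, give $A = B = 1$, so that $\phi(y) = (1+y)e^{-y}$. From \eqref{apd-4Equation} one then reads off $\widehat U(\xi,y) = \widehat u(\xi)\,\phi(|\xi|y)$, with $\partial_y^2 \widehat U(\xi,y) = |\xi|^2 \widehat u(\xi)\,\phi''(|\xi|y)$; here the Fourier-transformed condition $\partial_y \widehat U(\xi,0)=0$ is consistent because $\phi'(0)=0$.

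Finally, since $\widehat{\Delta U}(\xi,y) = -|\xi|^2\widehat U(\xi,y) + \widehat U_{yy}(\xi,y) = |\xi|^2 \widehat u(\xi)\big(\phi''(|\xi|y) - \phi(|\xi|y)\big)$, the Plancherel theorem together with the substitution $t = |\xi|y$ in the inner integral yields
\[
\int_{\R^{n+1}_+} |\Delta U(x,y)|^2\, dx\, dy = \frac{1}{(2\pi)^n} \int_{\R^n} |\xi|^4 |\widehat u(\xi)|^2 \Big( \int_0^{+\infty} \big(\phi''(t) - \phi(t)\big)^2 \, dt \Big) \frac{d\xi}{|\xi|}.
\]
A direct computation gives $\phi'(y) = -y e^{-y}$ and $\phi''(y) = (y-1)e^{-y}$, hence $\phi''(y) - \phi(y) = -2 e^{-y}$ and $\int_0^{+\infty} (\phi'' - \phi)^2 \, dt = 2$. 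Therefore the right-hand side equals $\tfrac{2}{(2\pi)^n}\int_{\R^n} |\xi|^3 |\widehat u(\xi)|^2 \, d\xi = 2 \int_{\R^n} u(x)\,(-\Delta)^{3/2} u(x)\, dx$, as claimed.

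I do not expect a genuine obstacle here: the only point deserving a line of justification is that membership in $W^{2,2}(\overline{\R_+^{n+1}})$ excludes the exponentially growing modes $e^{y}$ and $y e^{y}$, so that $\widehat U(\xi,\cdot)$ is the unique admissible solution of the fourth-order ODE; everything else is the same bookkeeping already carried out for order six in Proposition~\ref{apx-propExtension6}.
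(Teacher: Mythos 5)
Your proposal is correct and follows essentially the same route as the paper's own proof: partial Fourier transform in $x$, reduction to the model ODE $\phi^{(4)}-2\phi''+\phi=0$ with $\phi(0)=1$, $\phi'(0)=0$, boundedness selecting $\phi(y)=(1+y)e^{-y}$, and Plancherel plus the scaling $t=|\xi|y$ with $\int_0^{+\infty}(\phi''-\phi)^2\,dt=2$. The computations ($\phi''-\phi=-2e^{-y}$, hence the constant $2$) match the paper's, so no changes are needed.
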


\begin{proof}
The existence and uniqueness of $U$ solving \eqref{apd-4Equation} and \eqref{apd-4Boundary} is standard. Now by taking the Fourier transform in the $x$ variable on \eqref{apd-6Equation} we arrive at
\begin{equation}\label{eq:4Fourier} 
\begin{aligned}
0 = \widehat{\Delta^2 U} (\xi,y) =& \Big( -|\xi|^2 \, \text{Id} + \frac{\partial^2}{\partial y^2}\Big)^2 \widehat U(\xi, y)\\
=&   |\xi|^4 \widehat U(\xi, y) -2 |\xi |^2  \widehat U_{yy} (\xi, y)  + \widehat U_{yyyy}(\xi, y).
\end{aligned}
\end{equation}
Thus, we obtain an ordinary differential equation of order four for each value of $\xi$. Inspired by \eqref{eq:4Fourier}, let us now consider the ODE
\begin{equation}\label{eq:4ODE} 
\phi^{(4)}   - 2 \phi''  + \phi = 0
\end{equation}
in $(0, +\infty)$. From this, it is routine to verify that $\phi \in H^4 ((0, +\infty))$. In particular, all derivatives $\phi^{(i)}$ with $i=2, 3$ vanish at infinity. 

It is an easy computation to verify that any solution $\phi$ to \eqref{eq:4ODE} satisfying the initial conditions
\[
\phi (0) = 1, \quad \phi'(0) = 0.
\]
must be of the form
\[
\phi (y) = (C_1 + C_2 y)e^{-y} + \big[ (2C_1-C_2-1)y - (C_1-1) \big]e^{y}
\]
for some constants $C_1$ and $C_2$. If, in addition, we assume that $\phi$ is bounded, then we find that $C_1 = C_2=1$, which then implies that
\[
\phi (y) = (1+y)e^{-y}.
\]
Hence we have just shown that there is a unique bounded solution $\phi$ to \eqref{eq:4ODE} satisfying $\phi (0) = 1$, $\phi'(0) = 0$. Furthermore, by direct computation, we get
\[
\int_0^{+\infty} \big( -\phi  + \phi''\big)^2 dy = 2.
\]
Now from \eqref{eq:4Fourier}, it is easy to verify that
\[
\widehat U(\xi, y) = \widehat u(\xi) \phi (|\xi| y).
\]
We now compute $\int_{\R^{n+1}_+} | \Delta U(x,y)|^2 dx dy$. By the Plancherel theorem and the relation $\widehat{\Delta U} (\xi,y) = -|\xi|^2 \, \widehat U(\xi, y) + \widehat U_{yy}(\xi, y) $, we obtain
\[\begin{aligned}
\int_{\R^{n+1}_+} | \Delta U(x,y)|^2 dx dy =& \frac 1{(2\pi)^{n}} \int_{\R^{n}}\int_0^{+\infty} \big[- |\xi|^2  \widehat u(\xi)  \phi (|\xi| y)  + |\xi|^2 \widehat u(\xi) \phi'' (|\xi| y)  \big]^2 d\xi dy \\
=& \frac 1{(2\pi)^{n}}\int_{\R^{n}}   |\xi|^4  \widehat u(\xi)^2  \int_0^{+\infty} \big[-   \phi (|\xi| y)  +  \phi'' (|\xi| y)  \big]^2 dy d\xi \\
=& \frac {J(\phi)} {(2\pi)^{n}}  \int_{\R^{n}}   |\xi|^3  \widehat u(\xi)^2 d\xi \\
= & 2 \int_{\R^n} u(x)(- \Delta)^{3/2} u(x) dx.
\end{aligned}\]
The proof is complete.
\end{proof}

We now use Proposition \ref{apx-propExtension4} to prove \eqref{eq:SobolevTraceSpaceOrder4}, namely, the following inequality holds
\[
2 \frac{\Gamma(\frac{n+3}2)}{\Gamma(\frac{n-3}2)} \omega_n^{3/n} \Big(\int_{\R^n} |U(x,0)|^{\frac{2n}{n-3}} dx\Big)^{\frac{n-3}n} \leqslant \int_{\R^{n+1}_+} |\Delta U(x,y)|^2 dx dy
\]
for functions $U$ with $\partial_y U(x,0)=0$. Indeed, for simplicity, we set $u(x) = U(x,0)$. First we apply the fractional Sobolev inequality \eqref{eq:SobolevSpaceOrderS/2} to get
\begin{equation}\label{eq:4SobolevSpaceOrder}
\int_{\R^n} u(x)(- \Delta)^{3/2} u(x) dx \geqslant \frac{\Gamma(\frac{n+3}2)}{\Gamma(\frac{n-3}2)} \omega_n^{3/n} \Big(\int_{\R^n} |u(x)|^{\frac{2n}{n-3}} dx\Big)^{\frac{n-3}n}.
\end{equation}
Then we combine the preceding inequality and Proposition \ref{apx-propExtension4} to obtain the desired inequality. Clearly, equality in \eqref{eq:SobolevTraceSpaceOrder4} holds if, and only if, equality in \eqref{eq:4SobolevSpaceOrder} occurs, which implies that $U$ must be a biharmonic extension of a function of the form
\[
c \big( 1 +  |\xi - z_0|^2 \big)^{-(n-3)/2},
\]
where $c>0$ is a constant, $\xi \in \R^n$, $z_0 \in \R^n$, and $U$ also fulfills the boundary condition $\partial_y U(x,0)=0$.


\end{document}